\titleformat*{\section}{\fontseries{b}\fontsize{16}{20}\selectfont}
\titleformat*{\subsection}{\fontseries{b}\fontsize{13}{17}\selectfont}
\def\Ddots{\mathinner{\mkern1mu\raise\p@
\vbox{\kern7\p@\hbox{.}}\mkern2mu
\raise4\p@\hbox{.}\mkern2mu\raise7\p@\hbox{.}\mkern1mu}}
\newcommand{\bv}{\mathbf v}
\newcommand{\bw}{\mathbf w}
\newcommand{\bn}{\mathbf n}
\newcommand{\Cx}{{\mathbb C^\times}}
\newcommand{\Irr}{\operatorname{Irr}}
\newcommand\Rpp{\operatorname{RPP}}
\newcommand\soc{\operatorname{soc}}
\newcommand{\dimvec}{\underrightarrow{\dim}\,}
\newcommand{\new}[1]{\textbf{#1}}
\newcommand{\fg}{\mathfrak{g}}
\newcommand{\fh}{\mathfrak{h}}
\newcommand{\bC}{\mathbb{C}}
\newcommand{\bN}{\mathbb{N}}
\newcommand{\sslash}{\mathbin{/\mkern-6mu/}}
\DeclareMathOperator{\Hom}{Hom}
\DeclareMathOperator{\Gr}{Gr}
\newcommand\restr[2]{{
  \left.\kern-\nulldelimiterspace 
  #1 
  \vphantom{\big|} 
  \right|_{#2} 
  }}
\newtheorem{theorem}{Theorem}[section]
\newtheorem*{theorem*}{Theorem}
\newtheorem{proposition}[theorem]{Proposition}
\newtheorem*{proposition*}{Proposition}
\newtheorem{conjecture}[theorem]{Conjecture}
\newtheorem*{conjecture*}{Conjecture}
\newtheorem{lemma}[theorem]{Lemma}
\newtheorem*{lemma*}{Lemma}
\newtheorem{corollary}[theorem]{Corollary}
\newtheorem*{corollary*}{Corollary}
\newtheorem{definition}[theorem]{Definition}
\newtheorem*{definition*}{Definition}
\newtheorem{example}[theorem]{Example}
\newtheorem*{example*}{Example}
\newtheorem*{exercise*}{Exercise}
\newtheorem{remark}[theorem]{Remark}
\newtheorem*{remark*}{Remark}
\newtheorem*{question*}{Question}
\newtheorem*{claim*}{Claim}
\title{Heaps, Crystals, and Preprojective Algebra Modules}
\author{Anne Dranowski, Bal\'{a}zs Elek, Joel Kamnitzer, \\ and Calder Morton-Ferguson}
\date{\today}
\begin{document}
  
\maketitle
\begin{abstract}

Fix a simply-laced semisimple Lie algebra.  We study the crystal $ B(n\lambda)$, were $\lambda$ is a dominant minuscule weight and $ n $ is a natural number.  On one hand, $B(n\lambda)$ can be realized combinatorially by height $ n $ reverse plane partitions on a heap associated to $ \lambda $. On the other hand, we use this heap to define a module over the preprojective algebra of the underlying Dynkin quiver.  Using the work of Saito and Savage-Tingley, we realize $B(n\lambda)$ via irreducible components of the quiver Grassmannian of $n$ copies of this module. In this paper, we describe an explicit bijection between these two models for $B(n\lambda)$ and prove that our bijection yields an isomorphism of crystals.  Our main geometric tool is Nakajima's tensor product quiver varieties. 

\end{abstract}
\setcounter{tocdepth}{2}
\tableofcontents

\section{Introduction}
\label{sec:intro}

Let $\fg$ be a simply-laced complex semisimple Lie algebra and let $V(\lambda)$ be the irreducible representation of $\fg$ of highest weight $\lambda\in P_+$ (the set of dominant weights). The crystal $B(\lambda)$ of $V(\lambda)$, introduced by Kashiwara in \cite{Ka91}, is a coloured directed graph encoding the combinatorics of $V(\lambda)$. It has many different realizations. 
These include semistandard Young tableaux in type $A$ and their analogues in type $D$ \cite{KN94}, Littelmann paths \cite{L95}, Mirkovi\'c--Vilonen polytopes \cite{kamnitzer2007crystal}, and Kashiwara's monomials \cite{kashiwara2003realizations}, on the combinatorial side.  
On the geometric side, we can consider Springer fibres \cite{malkin2002tensor}, Mirkovi\'c--Vilonen cycles \cite{braverman2001crystals}, irreducible components of cores of Nakajima quiver varieties \cite{Sai02}, and simple perverse sheaves on quiver varieties \cite{lusztig1998quiver}.

\subsection{Young tableaux and Springer fibres} 
\label{subsec:ytab}
When $ \fg = \mathfrak sl_m $ dominant weights can be viewed as partitions, and $ B(\lambda) $ can be identified with the set $ SSYT(\lambda) $ of semistandard Young tableaux of shape $ \lambda $ with labels $\{1,\dots,m\}$, or, in a compatible way, with the set of irreducible components of $m$-step Springer fibres. To motivate our main result, let us briefly recall these models, and how they are related. 

Let $ A : \bC^N \rightarrow \bC^N $ be a nilpotent linear operator whose Jordan type is given by $ \lambda $.  
Consider the 
space of all $m$-step flags which are preserved by $A$,
$$ 
  F(A) := \left\{ 0= V_0 \subseteq V_1 \subseteq \cdots \subseteq V_m = \bC^N \,\big|\, A V_i \subseteq V_{i-1} \right\} . 
$$
There is a crystal structure on the set $ \Irr F(A)$ of irreducible components of $F(A)$ making it isomorphic to $ B(\lambda) $, with raising and lowering operators acting via Ginzburg's correspondences \cite[Chapter 4]{chriss1997representation}, as described by Malkin \cite[Section 3.7]{malkin2002tensor}.
The connected components of $F(A) $ correspond to
weight spaces of $ B(\lambda) $.

Given a flag $V = (V_0,V_1,\dots,V_m)$ in $F(A) $, we can encode the Jordan types of the restrictions $A|_{V_i}$ for $1\le i\le m$ in a tableau.  
More precisely, let $ \Psi_V $ be the tableau such that the shape formed by the boxes with labels in $\{1, \dots, i\}$ in $\Psi_V$ is the transpose of the Jordan type of $ A|_{V_i} $. 
The map $ V \mapsto \Psi_V$ is constructible on $ F(A)$, so we can define $\Psi_Z$ to be its value on a generic point of the irreducible component $Z$.  
This gives a bijection by work of Spaltenstein \cite{spaltenstein1976fixed}, and a crystal isomorphism by work of Savage \cite{Sav06}.
\begin{theorem} \label{th:classic}
  The map $$ \Irr F(A) \rightarrow SSYT(\lambda) \quad Z \mapsto \Psi_Z $$ is an isomorphism of crystals.
\end{theorem}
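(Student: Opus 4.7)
My plan is to split Theorem~\ref{th:classic} into the set-theoretic bijection (Spaltenstein) and its compatibility with the crystal operators (Savage), and attack the two halves separately. For the bijection, I would stratify $F(A)$ by the constructible map $\Psi \colon V \mapsto \Psi_V$, setting $F(A)_T := \Psi^{-1}(T)$ for each $T \in SSYT(\lambda)$. The key claim is that every nonempty stratum $F(A)_T$ is a smooth, irreducible, locally closed subvariety of $F(A)$ of maximal dimension; taking closures then produces the irreducible components, bijectively indexed by $SSYT(\lambda)$. I would prove this by induction on $m$ via the forgetful map $p_{m-1} \colon F(A) \to F_{m-1}(A)$ that drops $V_{m-1}$. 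Over a base stratum indexed by $T|_{\{1,\dots,m-1\}}$, the fiber is the variety of $A$-stable subspaces $V_{m-1} \subseteq \bC^N$ of prescribed codimension and prescribed induced Jordan type on the quotient. Spaltenstein's classical lemma — that such a parameter space of $A$-stable subspaces with fixed relative Jordan type is smooth, irreducible, and nonempty precisely when the shape difference is a horizontal strip — completes the induction; the horizontal-strip condition is exactly semistandardness of $T$.

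For the crystal isomorphism, I would work one color $i$ at a time. Malkin's construction realizes $\tilde{e}_i, \tilde{f}_i$ on $\Irr F(A)$ via the Hecke correspondence associated to the forgetful map $q_i \colon F(A) \to F(A)^{\hat{i}}$ that discards $V_i$. For fixed surrounding data $V_{i-1} \subseteq V_{i+1}$, the fiber of $q_i$ is an ordinary Grassmannian inside $\bigl(A^{-1}(V_{i-1}) \cap V_{i+1}\bigr)/V_{i-1}$; on the combinatorial side, altering $V_i$ only permutes the $i$- and $(i+1)$-entries of $\Psi_V$. The plan is to match the Hecke move against the standard bracketing signature rule on pairs of consecutive entries, reducing the verification to an explicit $\mathfrak{sl}_2$ computation at each $i$. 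Because both $\Irr F(A)$ and $SSYT(\lambda)$ are known \emph{a priori} to carry connected highest-weight crystals of highest weight $\lambda$, agreement of all $\tilde{e}_i$ together with the weight function suffices to conclude the crystal isomorphism.

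The main obstacle I expect is the second half: matching the geometric Hecke correspondence to the combinatorial signature rule. This requires tracking, at a generic point of an irreducible component $Z$, exactly how a change $V_i \rightsquigarrow V_i'$ alters the Jordan types $A|_{V_j}$ for $j \ne i$, and hence the labels in $\Psi_V$. The dimension and irreducibility statements from the first half guarantee that this bookkeeping is controlled by the generic fiber of $q_i$ over $q_i(Z)$, so the problem reduces to an $\mathfrak{sl}_2$ calculation on two consecutive rows of $\Psi_V$ — tedious but finite. Once verified for each $i$, the two crystal structures agree, and Theorem~\ref{th:classic} follows.
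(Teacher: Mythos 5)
The paper does not actually prove Theorem~\ref{th:classic}: it attributes the underlying bijection to Spaltenstein and the crystal compatibility to Malkin--Savage, which is exactly the two-part decomposition you propose, and your sketches of the two halves faithfully reconstruct the standard arguments from those references (stratification by relative Jordan type with the horizontal-strip/semistandardness condition; Ginzburg--Malkin Hecke correspondences reduced colour-by-colour to an $\mathfrak{sl}_2$ check against the bracketing rule). Your outline is correct up to minor bookkeeping --- the fibre of $q_i$ is the Grassmannian of subspaces squeezed between $V_{i-1}+A(V_{i+1})$ and $A^{-1}(V_{i-1})\cap V_{i+1}$, and ``maximal dimension'' must be read within each connected component of $F(A)$ (i.e.\ for each fixed dimension vector of the flag).
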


\subsection{Reverse plane partitions}
\label{subsec:rpp}

Our goal in this paper is to generalize \Cref{th:classic} in a type-independent way, using the combinatorics of reverse plane partitions and the geometry of quiver Grassmannians.

Recall that a weight $\lambda\in P_+$ is \new{minuscule} if the Weyl group $W$ of $\fg$ acts transitively on the weights of $V(\lambda)$. 
Fix $\lambda$ minuscule, denote by $w_0$ the longest element of $W$, and let $ w \in W $ be the minimal element such that $ w\lambda = w_0 \lambda $.
Associated to $ w $ is a poset $ H(w) $, called the \textbf{heap} of $ w $, which encodes all reduced words for $ w$ (see \cref{sec:heaps}). 
 
Since $ \lambda $ is minuscule, it is very easy to make a combinatorial model for the crystal 
$ B(\lambda) $.  The orbit $ W \lambda $ is one such model. The set $ \left\{ v \in W : v \le_L w \right\} $ of terminal subwords of $w$, (demarcated by $\le_L$ the left weak Bruhat order,) is another.  
In this paper, we will use the set of order ideals $J(H(w))$ in $H(w)$ as our primary model for $ B(\lambda)$ (see \Cref{prop:WJcrystal}).

Our main object of study will be the following set of order-reversing maps on $H(w)$.
\[
  \Rpp(w,n) :=  \left\{\Phi: H(w)\to \left\{0,\ldots,n\right\} \,\big|\, \Phi(x) \ge \Phi(y) \text{ whenever } x \le y \right\}\,. 
\]
Elements of $\Rpp(w,n)$ are also called reverse plane partitions (of shape $H(w)$ and height $n$) or simply RPPs. As such, these first appear in work of Proctor \cite{proctor1984bruhat}.

Let $ \phi_k := \Phi^{-1}\left(\{n - k +1, \dots, n\}\right)$. 
We consider the natural inclusion $ \Rpp(w,n) \rightarrow J(H(w))^n$ given by $ \Phi \mapsto (\phi_1, \dots, \phi_n) $. 
We show that there is a crystal structure on $ \Rpp(w,n) $ that is compatible with this inclusion.
\begin{theorem}
	There is an isomorphism of crystals $ \Rpp(w,n) \cong B(n \lambda) $ making the following diagram commute. 
  \[
  \begin{tikzcd}
    RPP(w,n) \ar[r]\ar[d] & B(n\lambda) \ar[d]\\
    J(H(w))^n \ar[r] & B(\lambda)^{\otimes n} 
  \end{tikzcd}  
  \]
\end{theorem}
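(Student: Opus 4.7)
The map $\Phi \mapsto (\phi_1, \dots, \phi_n)$ exhibits $\Rpp(w,n)$ as the subset of $J(H(w))^n$ consisting of increasing chains of order ideals $\phi_1 \subseteq \phi_2 \subseteq \cdots \subseteq \phi_n$. Via the identification $J(H(w)) \cong B(\lambda)$ from \Cref{prop:WJcrystal}, this exhibits $\Rpp(w,n)$ as a subset of the tensor product crystal $B(\lambda)^{\otimes n}$. My plan is to establish the following three points:
\begin{enumerate}[(i)]
  \item the chain condition is preserved under the tensor product Kashiwara operators, so $\Rpp(w,n)$ is a subcrystal of $B(\lambda)^{\otimes n}$;
  \item this subcrystal contains the highest weight element $v_\lambda^{\otimes n}$ of weight $n\lambda$; and
  \item $\Rpp(w,n)$ is connected under the lowering operators.
\end{enumerate}
Together, these imply that $\Rpp(w,n)$ is the connected component through $v_\lambda^{\otimes n}$, which by the standard embedding is $B(n\lambda) \subset B(\lambda)^{\otimes n}$; compatibility with the inclusion then holds by construction.

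For (i), I would first note that minusculity forces $\varepsilon_i(\phi), \varphi_i(\phi) \in \{0,1\}$ for every $\phi \in J(H(w))$, and that when $\varphi_i(\phi) = 1$, applying $\tilde f_i$ replaces $\phi$ by $\phi \cup \{x\}$ for a uniquely determined color-$i$ element $x = x_i(\phi) \in H(w) \setminus \phi$. The tensor product rule for $\tilde f_i$ then selects an index $j \in \{1, \dots, n\}$ by cancelling adjacent $+-$ pairs in the $i$-signature of $(\phi_1, \dots, \phi_n)$ and replaces $\phi_j$ with $\phi_j \cup \{x_i(\phi_j)\}$. I would carry out a case analysis showing that along a chain, the candidate color-$i$ elements are tightly constrained: if $x_i(\phi_j) = x$ and $x \notin \phi_{j+1}$, then necessarily $x_i(\phi_{j+1}) = x$ as well, and symmetrically on the $\tilde e_i$ side. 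Tracking which $+$ survives cancellation in these configurations should show that the selected index $j$ always satisfies $x_i(\phi_j) \in \phi_{j+1}$, so that $\phi_j \cup \{x_i(\phi_j)\} \subseteq \phi_{j+1}$ and the chain condition is preserved. The analogous argument handles $\tilde e_i$.

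For (ii) and (iii), the constant RPP $\Phi \equiv 0$ maps to $(\emptyset, \dots, \emptyset)$, a highest weight element of weight $n\lambda$; and every RPP can be constructed from $\Phi \equiv 0$ by successive single-box additions, each corresponding to some application of $\tilde f_i$ in the chain model. Alternatively, one can bypass the connectedness step by invoking the classical formula $|\Rpp(w,n)| = \dim V(n\lambda)$ for minuscule $\lambda$ (a consequence of the Proctor--Stanley $P$-partition theory for minuscule posets), matching cardinalities once (i) and (ii) are in place.

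The main obstacle is step (i): the careful analysis of the tensor product signature rule along a chain of order ideals, verifying that the selected index is always compatible with the chain condition. While minusculity reduces the $i$-signature to a sequence in $\{+, -, \varnothing\}$ at each position, the argument requires engaging with how color-$i$ elements appear and propagate in the heap $H(w)$ and how that interacts with signature cancellation; this is the technical heart of the proof, and the specific combinatorics of minuscule heaps should make it tractable.
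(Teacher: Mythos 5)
Your step (i) is essentially the paper's argument (\Cref{lem:preserveincreasing}): the key point in both is that if $f_i$ selects factor $j$ and the added bead $x$ were missing from $\phi_{j+1}$, then $x$ would also be addable to $\phi_{j+1}$, producing an uncancellable $+$ to the right of the one at position $j$ and contradicting the signature rule (and dually for $e_i$). Your sketch of the case analysis is the right mechanism and I see no problem there, beyond noting that the heap combinatorics you worry about is quite mild: minusculity gives $\varepsilon_i,\varphi_i\in\{0,1\}$ and the addable/removable color-$i$ bead is unique because $H(w)_i$ is totally ordered, which is all that is needed.

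The gap is in your primary justification of (iii). A single-box addition $\phi_j \rightsquigarrow \phi_j\cup\{x\}$ inside an increasing chain is \emph{not} in general realized by an application of $\tilde f_{\pi(x)}$ to the tensor product: the signature rule dictates which factor $f_i$ acts on, and it may select a factor other than $j$ (indeed, by your own analysis in (i) it tends to select the rightmost available factor). So ``every RPP is reachable from $\Phi\equiv 0$ by lowering operators'' does not follow from the existence of a chain of single-box additions; this is precisely the nontrivial converse direction. Your fallback via the cardinality identity $|\Rpp(w,n)|=\dim V(n\lambda)$ (Proctor) does repair this for minuscule $\lambda$: once (i) and (ii) show $\Rpp(w,n)$ is operator-closed and contains $v_\lambda^{\otimes n}$, it contains the component $B(n\lambda)$, and equal cardinalities force equality. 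The paper instead proves the converse directly and self-containedly (\Cref{thm:zeroentry} and \Cref{thm:conversegravsort}): given an increasing chain $\Phi$, choose $i$ with $s_iw<w$ and apply $e_i^{\varepsilon_i(\Phi)}$; one shows the resulting chain has no beads on the top cell of runner $i$, hence lives in $\Rpp(s_iw,n)$, and induction on $\ell(w)$ concludes. That route has the advantage of working verbatim for the Demazure crystals $B_w(n\lambda)$ attached to an arbitrary dominant minuscule $w$ (the setting of the body of the paper), where a clean dimension formula to count against is less readily available.
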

On a purely numerical level (i.e.\ ignoring the crystal structure) this can be deduced from \cite[Theorem 8]{proctor1984bruhat}.  On the other hand, we can easily relate these RPPs to Lakshmibai--Seshadri paths \cite{LS91} (see \Cref{re:LS}) and the crystal structure on these paths was studied by Littelmann \cite{L94}.

\subsection{Quiver Grassmannians} \label{subsec:quivergr}
Keep $\lambda\in P_+$ and $ w\in W$ as above. 
Let $ \Pi $ be the preprojective algebra of $\fg$.
On the geometric side, we will work with a model for the crystal $ B(n\lambda) $ defined using quiver Grassmannians of injective $ \Pi$-modules.

From the heap $H(w)$, we construct a module $ \bC H(w)$ for $\Pi$. This module can be visualized using the Kleshchev--Ram glass bead game for the heap \cite{Ram15}. 
Submodules of $ \bC H(w) $ are in natural bijection with order ideals of $ H(w) $ or right subwords $\left\{v\le_L w\right\}$. 

We define the quiver Grassmannian of $\bC H(w)$ denoted $\Gr(\bC H(w)^{\oplus n}) $ to be the set of all $\Pi$-submodules of $ \bC H(w)^{\oplus n} $.
This is a disconnected projective variety whose connected components are labelled by possible dimension vectors of submodules.
We prove that $ \bC H(w) $ is an injective $ \Pi$-module.
By \cite[Prop.~4.12]{savage2011quiver} it follows that $ \Gr(\bC H(w)^{\oplus n}) $ is the core of a Nakajima quiver variety, and so by \cite[Th.~4.6.4]{Sai02}, the set of irreducible components of $ \Gr(\bC H(w)^{\oplus n}) $ is a geometric realization of $ B(n\lambda) $.   

\subsection{From submodules to reverse plane partitions} \label{subsec:mod-to-rpp}

As both $\Rpp(w,n) $ and $ \Irr \Gr(\bC H(w)^{\oplus n})$ are realizations of $ B(n\lambda) $, there is a unique crystal isomorphism between them.  
Our goal in this section is to describe the underlying bijection.

The heap $H(w)$
is equipped with a map $\pi:H(w)\to I$ \cite[Section~2.2]{St96}, such that the sets $H(w)_i := \pi^{-1}(i)$ are totally ordered for any $i\in I$.   So for each $ i \in I$, we can define a nilpotent operator $ A_i $ on each $ \bC H(w)_i $ as a Jordan block in the basis $H(w)_i$. 
This definition extends to a nilpotent operator (also denoted) $A_i$ on $\bC H(w)^{\oplus n}_i $. By the construction of $ A_i$, if $M \subset \bC H(w)^{\oplus n} $ is a submodule, then $M_i$ is invariant under $ A_i$. So given $M\in \Gr(\bC H(w)^{\oplus n})$, we can consider the Jordan type of $ A_i\big|_{M_i} $.  We encode these Jordan types into a reverse plane partition $ \Phi_M $ as follows. The values of $ \Phi_M $ on $ H(w)_i $ form the transpose of the partition giving the sizes of the Jordan blocks of $ A_i\big|_{M_i} $ for each $i$. In section \cref{se:socle}, we show that $ \Phi_M $ can also be regarded as encoding the socle filtration of $ M $.

As in \cref{subsec:ytab}, since $\Phi$ is a constructible function on $Gr(\mathbb{C}H(w))$, for any irreducible component $Z$ of $Gr(\mathbb{C}H(w)^{\oplus n})$, we write $ \Phi_Z$ for the value of $ \Phi $ on a generic point of $Z$. Here is the main result of this paper.
\begin{theorem} \label{thm:main}
  The map $ Z \mapsto \Phi_Z $ defines the crystal isomorphism $ \Irr \Gr(\bC H(w)^{\oplus n}) \cong \Rpp(w,n)$.
\end{theorem}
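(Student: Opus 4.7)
The plan is to leverage the fact that both $\Rpp(w,n)$ and $\Irr \Gr(\bC H(w)^{\oplus n})$ are already known to be isomorphic to $B(n\lambda)$: the former by the first theorem of the introduction, and the latter by combining Saito's result \cite{Sai02} with the injectivity of $\bC H(w)$ as a $\Pi$-module. Since $B(n\lambda)$ is an irreducible highest-weight crystal and admits no nontrivial automorphisms, showing that $Z \mapsto \Phi_Z$ is a weight-preserving bijection compatible with enough additional structure will force it to be the unique crystal isomorphism. The structure I would exploit is the pair of embeddings into $J(H(w))^n$ and $\Irr \Gr(\bC H(w))^n$ induced by the tensor product, together with their common identification with $B(\lambda)^{\otimes n}$.

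First I would establish well-definedness: that for any submodule $M \subseteq \bC H(w)^{\oplus n}$, the function $\Phi_M$ is order-reversing on $H(w)$. This amounts to propagating information between the different nilpotent operators $A_i$ through the preprojective algebra relations: if $x \le y$ in $H(w)$ arise from adjacent nodes $i,j$ in the Dynkin diagram, then the $\Pi$-module structure of $M$ links the kernels of appropriate powers of $A_i|_{M_i}$ and $A_j|_{M_j}$, which in turn forces $\Phi_M(x) \ge \Phi_M(y)$. I would treat the case $n = 1$ first, where submodules of $\bC H(w)$ correspond bijectively to order ideals $J \subseteq H(w)$ and $\Phi_M$ reduces to the characteristic function $\mathbf{1}_J$. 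The socle filtration interpretation promised in \cref{se:socle} is the natural conceptual source for both monotonicity and bijectivity in this base case.

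The heart of the argument is the reduction from $n$ copies to the $n=1$ case via Nakajima's tensor product quiver varieties. Through the tensor-product stratification, a generic point of an irreducible component of $\Gr(\bC H(w)^{\oplus n})$ can be identified with an $n$-tuple $(M^{(1)}, \ldots, M^{(n)})$ of submodules of $\bC H(w)$, and generically the Jordan type of $A_i|_{M_i}$ decomposes as the union of the Jordan types of the $A_i|_{M^{(k)}_i}$. Under the $n=1$ bijection each $M^{(k)}$ corresponds to an order ideal $\phi_k \in J(H(w))$, and additivity of Jordan types translates precisely into the identity $\Phi_M = \sum_k \mathbf{1}_{\phi_k}$, which is exactly the RPP encoded by the nested chain $\phi_1 \subseteq \cdots \subseteq \phi_n$ coming from the socle filtration. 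This shows commutativity of the square comparing the two embeddings into $J(H(w))^n$; bijectivity of $Z \mapsto \Phi_Z$ then follows from the injectivity of $B(n\lambda) \hookrightarrow B(\lambda)^{\otimes n}$, and uniqueness of crystal morphisms upgrades the bijection to an isomorphism of crystals.

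The main obstacle I anticipate is the generic decomposition step: on a generic point of each irreducible component one must genuinely realize the socle filtration of $M$ as the chain $\phi_1 \subseteq \cdots \subseteq \phi_n$ coming from the tensor-product stratification. This will require matching Nakajima's tensor product stratification against the stratification by socle length, identifying the unique open stratum in each irreducible component, and controlling the behaviour of the constructible function $\Phi$ there. Once this matching is in hand, the commutative tensor-product square and the rigidity of $B(n\lambda)$ as a highest-weight crystal force $Z \mapsto \Phi_Z$ to be the desired crystal isomorphism.
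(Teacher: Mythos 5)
Your overall strategy matches the paper's: use Nakajima's tensor product varieties to show that the component $Z(\Phi)$ attached to an increasing chain $\phi_1\subseteq\cdots\subseteq\phi_n$ realizes the crystal isomorphism $\Rpp(w,n)\cong\Irr\Gr(\bC H(w)^{\oplus n})$ in the opposite direction, and then verify that $\Phi_{Z(\Phi)}=\Phi$ by computing $\Phi_M$ at a generic point $M$. However, there is a genuine gap at exactly the step you flag as "the main obstacle": you assert that generically the Jordan type of $A_i|_{M_i}$ decomposes as the union of the Jordan types of $A_i$ on the subquotients $M^k=M^{\le k}/M^{\le k-1}$, i.e.\ that $\dim(M_i\cap\ker A_i^s)=\sum_k\dim(M_i^k\cap\ker A_i^s)$. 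This is not a consequence of genericity or of the tensor-product stratification. For a nilpotent operator preserving a filtration one only has the inequality $\dim(\ker A_i^s\cap M_i)\le\sum_k\dim(\ker A_i^s\cap M_i^k)$ in general, and the paper's Example 5.5 exhibits an explicit submodule of $\bC H(w)^{\oplus 2}$ (in type $A_3$) where the inequality is strict. What the tensor-product variety argument actually gives you is only that a generic $M$ lies in the locus $U$ where the subquotients form an increasing chain of order ideals; turning that qualitative fact into the quantitative additivity of Jordan types is the hard content of the paper.

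Concretely, the paper isolates two conditions $C1$ and $C2$ on $(M,i,k,s)$: membership in $U$ immediately yields $C1$ (existence of elements surviving $A_i^s$ propagates up the filtration), and $C2$ is what implies additivity, via a lifting argument for kernel elements. The implication $C1\Rightarrow C2$ is where all the heap combinatorics enters: it uses Stembridge's local classification of consecutive beads on a runner, the slant-sum decomposition into Proctor's slant-irreducible $d$-complete posets, the existence of an "attractive" vertex in each slant-irreducible piece, and (in type $E$) a case-by-case verification over Proctor's fifteen classes. None of this is touched by "matching Nakajima's stratification against the socle-length stratification"; the socle filtration describes $\Phi_M$ itself (as in Section 3.6) but does not by itself compare $\Phi_M$ with $\sum_k\mathbf{1}_{\phi_k}$. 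So your proposal correctly sets up the reduction and correctly identifies where the difficulty lies, but the proof of the key additivity lemma is missing, and it cannot be supplied by soft genericity arguments.
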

This theorem has the same flavour as \Cref{th:classic}.  In fact, when $ \fg = \mathfrak{sl}_m $ the quiver Grassmannian $ \Gr(\bC H(w)^{\oplus n}) $ is isomorphic to a Springer fibre (associated to a rectangular Jordan type) and \Cref{thm:main} reduces to \Cref{th:classic} (see \cref{se:Maffei}).

We now describe our method of proof of \Cref{thm:main}. Any $ M \subset \bC H(w)^{\oplus n} $ carries a filtration by $ M^{\le k} := M \cap \bC H(w)^{\oplus k}$ for $k = 0,1,\dots,n$. Each subquotient $ M^k := M^{\le k}/M^{\le k-1}$ of this filtration is a submodule of $ \bC H(w) $ and thus can be identified with an order ideal $ \phi_k $ in $ H(w) $.

We use Nakajima's tensor product varieties to prove that an irreducible component $Z$ of $Gr(\bC H(w)^{\oplus n})$ corresponds to $ \phi_1 \otimes \cdots \otimes \phi_n $ under the embedding 
$$
  B(n\lambda) \hookrightarrow B(\lambda)^{\otimes n}
$$ 
if a general point $M \in Z $ yields the subquotients $\bC\phi_1, \dots, \bC\phi_n \subset \bC H(w)$.

On the other hand, we prove that the information of these subquotients is enough to find the Jordan types of the nilpotent operators used to construct \(\Phi_M\) and therefore deduce the irreducible component giving rise to them. This requires a result of Stembridge on the structure of minuscule heaps, as well as some case by case analysis.
\begin{remark}
  In the body of the paper, we work more generally with any dominant minuscule $ w \in W $ with dominant witness $ \lambda$.  In this more general setup, $ \Irr \Gr(\bC H(w)^{\oplus n}) $ and $\Rpp(w,n) $ give models for the Demazure crystal $B_w(n\lambda) $.
\end{remark}

\subsection{Future directions} \label{subsec:future}

This paper is part of a larger project begun in \cite{mvbasis} whose goal is to compare the dual canonical, dual semicanonical, and MV bases for representations of $ \fg $.  These bases are indexed by simple KLR modules, generic preprojective algebra modules, and Mirkovi\'c--Vilonen cycles, respectively.  
From this perspective, it would be natural to try to describe the simple KLR modules and MV cycles labelled by these RPPs.  
We hope that, as in this paper, this can be done in a type-independent fashion.  As a first step, we note that the simple KLR modules associated to order ideals in $ H(w) $ are the strongly homogeneous modules of Kleshchev--Ram \cite{KR10}. 
Similarly, the MV cycles associated to order ideals in $ H(w) $ are Schubert varieties in the partial flag variety $ G/P_\lambda $, where $P_\lambda$ is the parabolic subgroup associated to the minuscule dominant weight $\lambda $.

In another direction, we know that $\bigoplus_n V(n\lambda)$ is the homogeneous coordinate ring of the partial flag variety $ G/P_\lambda$. 
This coordinate ring carries a cluster algebra structure \cite{geiss2008partial}.  It would be interesting to understand the interaction of the combinatorics of the cluster algebra with the combinatorics of $ \bigsqcup_n \Rpp(w,n)$. For example, which collections of RPPs correspond to clusters and what does mutation look like in this combinatorics? 

Our work was inspired by the paper of Garver--Patrias--Thomas \cite{GPT18} who studied certain quiver representations and reverse plane partitions.  In their work, RPPs encode Jordan types of nilpotent endomorphisms of quiver representations.  This looks quite similar to their use in our paper, but we were not able to make any precise connection between their work and ours.  It would be interesting to investigate this further.

\subsection{Acknowledgments}
We would like to thank Pierre Baumann, Elie Casbi, Steven Karp, Peter Tingley, and Hugh Thomas for helpful conversations on these topics. We would also like to thank Nathaniel Libman for significant contributions to the project through his suggestions and several useful discussions.

\section{Heaps and crystals}
\label{sec:heaps}

Write $s_i$ for the simple reflections generating the Weyl group $W$ and $\alpha_i,\alpha_i^\vee$ for the associated simple (co)roots. 
\subsection{Heaps and dominant minuscule elements}
\label{sec:domminheaps}

In this section we lay the combinatorial foundations for the rest of the paper. The starting point for both our geometric and combinatorial constructions is a minuscule heap (\Cref{def:minuheap}). Our main references for this section are Stembridge's papers \cite{St96} and \cite{St01}. 

\begin{definition}\label{def:heap}
  Let $\mathbf{w}=(s_{i_1},\ldots , s_{i_\ell})$ be a reduced word for $w\in W$. The \textbf{heap} $H(\mathbf{w})$ of $\mathbf{w}$ is the partially ordered set $(\{1,2,\dots,\ell\},\prec)$ 
  defined as the transitive closure of the relation
  \[
    a \prec b \quad\text{ if }\quad a>b\text{ and }C_{{i_a},{i_b}} < 0,
  \]
  where $C_{{i_a},{i_b}}$ is the $(i_a,i_b)$ entry of the Cartan matrix of $\fg$.
\end{definition}
$H(\bw)$ is endowed with a map $\pi:H(\bw)\to I$ defined by $\pi(a)=i_a$ for each $a\in\{1,2,\dots,\ell\}$, and we denote $H(\bw)_i := \pi^{-1}(i)$. 

\begin{remark}\label{rmk:runners}
Since $  \mathbf w $ is reduced, any two consecutive occurrences of $ s_i $ in this word must be separated by a generator $s_j$ that does not commute with $ s_i$, so by transitivity each of the sets $H(\bw)_i$ are totally ordered.
\end{remark}
In thinking about $ H(\bw) $, we will use the  abacus analogy of Kleshchev and Ram \cite{KR10}. Imagine a wooden base in the shape of the Dynkin diagram, and attached to each node in the diagram is a vertical runner extending upward.   
We read the simple reflections in $\mathbf w$ from right to left, adding a bead to the $i_k$th rung of the abacus when $s_{i_k}$ is read. Subsequent beads fall to rest against beads already placed on neighbouring runners. See \Cref{fig:3412heap} for an example. Ram calls this visualization the ``glass bead game''.
\begin{figure}[ht]
  \centering
  \begin{tikzpicture}
    \foreach \x in {0,1,2,3} 
      \draw [line width=1mm] (\x, 0) -- (\x,5);
    \foreach \x in {0,1,2} 
      \shade[shading=ball, ball color=white] (\x+1,\x+1) circle (0.7);
    \foreach \x in {0,1,2} 
      \shade[shading=ball, ball color=white] (\x,\x+2) circle (0.7);
    \draw [line width=0.5mm] (0,0) -- (3,0);
    \foreach \x/\xtext in {0/1,1/2,2/3,3/4} 
      \draw (\x cm,0 pt) node[fill=white,inner sep=2pt,draw] {${\xtext}$};
  \end{tikzpicture}
  \caption{The heap $H((s_3, s_4, s_2, s_3, s_1, s_2))$ in type $A_4$}
  \label{fig:3412heap}
\end{figure}

By \cite[Proposition~2.2]{St96}, two heaps $H(\mathbf{w}_1)$ and $H(\mathbf{w}_2)$ give rise to isomorphic posets if $\mathbf{w}_2$ can be obtained from $\mathbf{w}_1$ by transposing adjacent commuting pairs of Coxeter generators. 
When any two reduced words for a given $w\in W$ are related in this way,
$w$ is called \textbf{fully commutative}. 
\begin{example}
  The element $s_2s_1s_3s_2$ in $S_4$ is fully commutative. In general, an element of $S_n$ is fully commutative if and only if it is $321$-avoiding (in one-line notation) if and only if no reduced expression for it contain $s_i s_{i+1} s_i$ as a subword.
\end{example}

If $w$ is fully commutative, then any two reduced words give rise to canonically 
isomorphic heaps, so we identify them as a single heap which is denoted $H(w)$. 

We will be interested in a special class of fully commutative elements. 
Recall that in the beginning of the introduction we assumed $\fg$ is simply-laced. This lets us simplify some of the statements in Stembridge's papers \cite{St96} and \cite{St01}. We state the simplified versions but cite the original ones.

\begin{definition}\cite{St01} \label{def:minuscule}
  Let $w \in W$ and $\lambda \in P $. If there exists a reduced word $(s_{i_1},\ldots, s_{i_\ell})$ for $w$ such that
  \[
    \langle \alpha_{i_k}^\vee  , s_{i_{k+1}}\cdots s_{i_{\ell -1}} s_{i_\ell}\lambda \rangle =1
  \]
  for all $1\leq k \leq \ell$, or, equivalently,
  \[
    s_{i_{k}}\cdots s_{i_\ell}\lambda = \lambda -\alpha_{i_\ell} -\alpha_{i_{\ell-1}}-\cdots  -\alpha_{i_k}
  \]
  for all $1\leq k \leq \ell$, then we say that $w$ is \textbf{$\lambda$-minuscule}. We say that $w$ is \textbf{dominant minuscule} if it is $\lambda$-minuscule for some dominant $\lambda$. 
  In either case, $\lambda$ is called a \new{witness} for $w$. 
\end{definition}

\begin{proposition}(\cite[Proposition 2.1]{St01}) 
  If $w$ is $\lambda$-minuscule then $w$ is fully commutative and the condition in \Cref{def:minuscule} holds for any choice of reduced word.
\end{proposition}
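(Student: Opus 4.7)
My plan is to isolate two intermediate facts and then combine them via Matsumoto's theorem on reduced expressions. The two facts are: (i) the $\lambda$-minuscule condition is preserved by commutation moves on reduced words, and (ii) no $\lambda$-minuscule reduced word contains a contiguous subword of the form $(s_i, s_j, s_i)$ with $C_{ij}=-1$. Both conclusions of the proposition will follow formally from (i) and (ii).

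For (i), I would suppose positions $k$ and $k+1$ of a $\lambda$-minuscule reduced word carry commuting labels $s_{i_k}$ and $s_{i_{k+1}}$ and set $\nu = s_{i_{k+2}}\cdots s_{i_\ell}\lambda$. The conditions at positions $\geq k+2$ are unaffected by the swap, and using $\langle \alpha_{i_k}^\vee, \alpha_{i_{k+1}}\rangle = 0$ one checks directly that $\langle \alpha_{i_k}^\vee, s_{i_{k+1}}\nu\rangle = \langle \alpha_{i_k}^\vee, \nu\rangle$ (and vice versa), so the pairings at positions $k$ and $k+1$ in the swapped word evaluate to the same scalars as in the original.

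For (ii), I would assume such a pattern occupies positions $a < a+1 < a+2$ with labels $i, j, i$ and write $\mu_k = s_{i_{k+1}}\cdots s_{i_\ell}\lambda$. The condition at $a+2$ gives $\mu_{a+1} = \mu_{a+2} - \alpha_i$. Substituting into the condition at $a+1$ forces $\langle \alpha_j^\vee, \mu_{a+2}\rangle = 1 + C_{ji} = 0$, so $s_j\mu_{a+1} = \mu_{a+1} - \alpha_j$ and therefore $\mu_a = \mu_{a+2} - \alpha_i - \alpha_j$. Pairing with $\alpha_i^\vee$ yields $1 - 2 - C_{ij} = 0$, in direct contradiction to the minuscule condition at position $a$.

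Finally, by Matsumoto's theorem any two reduced words for $w$ are connected by a sequence of braid and commutation moves. Starting from the given minuscule reduced expression, if any connecting chain used a braid move, I would pick the first such move; the word just before it is reached by commutations only, hence is still $\lambda$-minuscule by (i), yet contains a contiguous braid pattern, contradicting (ii). So only commutation moves occur, giving full commutativity; and (i) then propagates the minuscule condition to every reduced word. I expect the only delicate point to be the coroot-pairing bookkeeping in (ii), which is clean in the simply-laced case thanks to $C_{ij} \in \{0, -1\}$ for $i \neq j$; in Stembridge's original non-simply-laced setting one would also need to rule out the longer pattern $(s_i, s_j, s_i, s_j)$, which is why he states the result with a separate hypothesis.
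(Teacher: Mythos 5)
Your proof is correct. The paper itself gives no argument for this statement; it is quoted directly from Stembridge \cite[Proposition 2.1]{St01}, and your two lemmas --- invariance of the $\lambda$-minuscule condition under transposition of adjacent commuting generators, and the impossibility of a contiguous pattern $(s_i,s_j,s_i)$ with $\langle\alpha_i^\vee,\alpha_j\rangle=-1$ (your computation $\langle\alpha_i^\vee,\mu_a\rangle = 1-2-C_{ij}=0\neq 1$ checks out) --- combined with Tits/Matsumoto exactly reconstruct Stembridge's original argument, specialized to the simply-laced case. The only quibble is your closing aside about the non-simply-laced setting: Stembridge's Proposition 2.1 is stated and proved in full generality without an extra hypothesis (one must simply rule out all braid patterns $s_is_js_i\cdots$ of length $m_{ij}\ge 3$ by the analogous computation), but this does not affect the validity of your proof of the statement as given here.
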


\begin{definition}\label{def:minuheap}
  If $w$ is minuscule (dominant minuscule), then we say that $H(w)$ is a minuscule (dominant minuscule) heap.
\end{definition}

Stembridge gives a characterization of minuscule and dominant minuscule Weyl group elements which we now recall. 
\begin{proposition}(\cite[Proposition 2.3]{St01})
  \label{thm:lminuscule}
  If $\mathbf{w}=(s_{i_1},\ldots ,s_{i_\ell})$ is a reduced word for $w$ then $w$ is minuscule if and only if between every pair of consecutive occurrences of a generator there are exactly two generators that do not commute with it. 
\end{proposition}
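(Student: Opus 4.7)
The plan is to track how $b_j := \langle \alpha_{i}^\vee, \lambda_j\rangle$, where $\lambda_j := s_{i_j}\cdots s_{i_\ell}\lambda$, evolves as $j$ decreases through a segment of $\mathbf w$ containing two consecutive occurrences of $s_i$. The update rule $\langle \alpha_i^\vee, s_{i_m}\nu\rangle = \langle \alpha_i^\vee,\nu\rangle - C_{i,i_m}\langle \alpha_{i_m}^\vee, \nu\rangle$, specialized to the simply-laced case where $C_{i,i_m}\in\{2,0,-1\}$, shows that applying $s_{i_m}$ leaves $\langle \alpha_i^\vee,\cdot\rangle$ unchanged when $s_{i_m}$ commutes with $s_i$ (and $i_m\ne i$), negates it when $i_m=i$, and adds $\langle \alpha_{i_m}^\vee,\nu\rangle$ when $s_{i_m}$ does not commute with $s_i$. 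Whenever the minuscule condition is available at position $m$, this last case contributes exactly $+1$ to $b_m$, since $\langle \alpha_{i_m}^\vee,\lambda_{m+1}\rangle = 1$.

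Given this local calculus, the forward direction is immediate. If $w$ is $\lambda$-minuscule and $k<k'$ are consecutive occurrences of $s_i$ in $\mathbf w$, then $b_{k'+1}=1$ by the minuscule condition at $k'$, so applying $s_i$ yields $b_{k'}=-1$. Each further step $b_{m+1}\to b_m$ for $k<m<k'$ adds $+1$ if $s_{i_m}$ does not commute with $s_i$ and is otherwise unchanged, so the minuscule condition at $k$ (which demands $b_{k+1}=1$) forces the number of positions $m\in(k,k')$ with $s_{i_m}$ non-commuting with $s_i$ to equal $1-(-1)=2$.

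For the reverse direction I would construct a witness $\lambda\in P$ explicitly. For each $i\in I$ appearing in $\mathbf{w}$, let $k^*(i)$ be the rightmost position with $i_{k^*(i)}=i$, and set $\langle \alpha_i^\vee,\lambda\rangle$ to be the unique integer making the minuscule condition hold at $k^*(i)$; for generators not appearing in $\mathbf w$, set $\langle \alpha_i^\vee,\lambda\rangle=0$. The minuscule condition at every position $k$ then follows by descending induction on $k$: the base case $k=k^*(i_k)$ holds by construction, and otherwise the same local computation is run between $k$ and the next occurrence $k'$ of $s_{i_k}$, invoking the inductive hypothesis at each intermediate position to convert the ``exactly two non-commuting generators'' hypothesis into the conclusion $b_{k+1}=-1+2=1$. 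The main obstacle is organizational: one must check that the proposed $\lambda$ is well-defined, i.e.\ that the integer obtained from $k^*(i)$ agrees with the value forced by any earlier occurrence of $s_i$, but this consistency is exactly what the hypothesis, applied to each consecutive pair along the $i$-runner, guarantees.
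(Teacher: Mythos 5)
The paper offers no proof of this proposition; it is quoted directly from Stembridge \cite[Prop.~2.3]{St01}, and your argument is correct and is essentially Stembridge's original one --- tracking $b_j=\langle\alpha_i^\vee,\lambda_j\rangle$ along the word, where each non-commuting letter shifts it by $+1$ and each occurrence of $s_i$ negates it --- specialized to the simply-laced case, where the general condition collapses to ``exactly two non-commuting generators.'' The one point to tighten is in the converse: the phrase ``the unique integer making the condition hold at $k^*(i)$'' is a priori circular, since that condition involves $\lambda_{k^*(i)+1}$ and hence the other coordinates of $\lambda$; instead define $\langle\alpha_i^\vee,\lambda\rangle:=1+\langle\alpha_i^\vee,\alpha_{i_{k^*(i)+1}}+\cdots+\alpha_{i_\ell}\rangle$ outright, and let the descending induction establish $\lambda_{k+1}=\lambda-\sum_{j>k}\alpha_{i_j}$ at each step, which retroactively identifies this explicit value with the implicit requirement and removes any need for a separate well-definedness check.
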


\begin{proposition}(\cite[Proposition 2.5]{St01})
  \label{dommin-condition}
  If $\mathbf{w}=(s_{i_1},\ldots ,s_{i_\ell})$ is a reduced word for $w$ then $w$ is dominant minuscule if and only if the conditions of \Cref{thm:lminuscule} are satisfied and the last occurrence of each generator in $\mathbf{w}$ is followed by at most one generator that does not commute with it.
\end{proposition}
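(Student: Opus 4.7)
The plan is to reduce both directions to a single bookkeeping identity. Fix a reduced word $\mathbf w = (s_{i_1},\dots,s_{i_\ell})$ and write $\mu_k := s_{i_{k+1}}\cdots s_{i_\ell}\lambda$. For any weight $\lambda$, the key quantity is
\[
\langle \alpha_{i_k}^\vee, \mu_k \rangle = \langle \alpha_{i_k}^\vee, \lambda\rangle - \sum_{j=k+1}^{\ell}\langle \alpha_{i_k}^\vee, \alpha_{i_j}\rangle,
\]
which only makes sense if we already know that $\mu_k = \lambda - \alpha_{i_\ell} - \cdots - \alpha_{i_{k+1}}$, so in practice the plan is to prove that latter identity and the $\lambda$-minuscule pairing condition simultaneously, by downward induction on $k$ from $\ell$.

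Setting $i = i_k$, letting $m_k$ denote the number of indices $j>k$ with $i_j = i$, and letting $c_k$ denote the number of indices $j>k$ for which $s_{i_j}$ does not commute with $s_i$ and $i_j \ne i$, the above rewrites as
\[
\langle \alpha_i^\vee, \mu_k\rangle = \langle \alpha_i^\vee, \lambda\rangle - 2 m_k + c_k.
\]
I will first establish the forward direction: if $w$ is $\lambda$-minuscule with $\lambda$ dominant, then taking $k$ to be the last occurrence of each generator $s_i$ (so $m_k = 0$) gives $\langle \alpha_i^\vee, \lambda \rangle = 1 - c_k$, and dominance forces $c_k \le 1$, which is the claimed condition on tails of $\mathbf w$.

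For the backward direction, assuming the minuscule condition of \Cref{thm:lminuscule} together with the tail condition, I will define $\lambda$ by setting $\langle \alpha_i^\vee, \lambda\rangle = 1 - c_i^{\mathrm{tail}}$ for every $i$ appearing in $\mathbf w$ (and $0$ otherwise), where $c_i^{\mathrm{tail}}$ is the number of generators after the final occurrence of $s_i$ that do not commute with it. The tail hypothesis guarantees $\langle \alpha_i^\vee, \lambda \rangle \in \{0,1\}$, so $\lambda$ is dominant, and then I check the pairing condition at each $k$ by downward induction. The inductive step reduces, via the displayed formula, to the combinatorial identity
\[
c_k = 2 m_k + c_i^{\mathrm{tail}},
\]
which follows by partitioning the tail $(i_{k+1},\dots,i_\ell)$ at the $m_k$ subsequent occurrences of $s_i$: the hypothesis of \Cref{thm:lminuscule} contributes exactly $2$ non-commuting generators in each of the $m_k$ segments strictly between consecutive $s_i$'s (including the segment between position $k$ and the first later occurrence), and $c_i^{\mathrm{tail}}$ in the final segment past the last occurrence.

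The main obstacle is the second direction, where the essential point is verifying this segmentation identity cleanly; once it is in hand, plugging into the displayed formula gives $\langle \alpha_i^\vee, \mu_k\rangle = 1$ on the nose. Both directions together yield the equivalence.
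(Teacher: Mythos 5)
The paper does not prove this statement; it is quoted directly from \cite[Proposition~2.5]{St01}, so there is no in-paper argument to compare against. Your proof is correct and is essentially Stembridge's own: the bookkeeping identity $\langle\alpha_{i_k}^\vee,\mu_k\rangle=\langle\alpha_{i_k}^\vee,\lambda\rangle-2m_k+c_k$ together with the segmentation $c_k=2m_k+c_{i}^{\mathrm{tail}}$ is exactly the right computation, and the weight you construct in the backward direction, $\langle\alpha_i^\vee,\lambda\rangle=1-c_i^{\mathrm{tail}}$, is precisely the minimal witness $\lambda_{\min}$ that the paper later identifies in \Cref{thm:witness}. Two small points worth making explicit: in the forward direction the conditions of \Cref{thm:lminuscule} hold simply because dominant minuscule implies minuscule, so that proposition supplies them for free; and applying your displayed identity to the \emph{given} reduced word (rather than to some word witnessing $\lambda$-minusculity) relies on the quoted fact (\cite[Proposition~2.1]{St01}) that once the pairing condition holds for one reduced word it holds for all of them. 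Neither is a gap, just a citation to insert.
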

The following result is also due to Stembridge. 
\begin{proposition}(\cite[Lemma 3.1]{St96})
  \label{thm:JH(w)weakorder}
  Let $w$ be fully commutative. Then $v \mapsto H(v)$ defines an isomorphism of posets $\left\{ v\in W : v\leq_L w \right\} \cong J(H(w))$.
\end{proposition}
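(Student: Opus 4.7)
The plan is to fix a reduced word $\bw = (s_{i_1},\ldots,s_{i_\ell})$ for $w$ and identify $H(w)$ with $\{1,\ldots,\ell\}$ under the partial order of \Cref{def:heap}. Under this identification any terminal segment $\{k,k+1,\ldots,\ell\}$ is an order ideal, since the generating relations satisfy $a \prec b \implies a > b$ numerically. The forward map sends $v \leq_L w$ to the set of positions occupied by a reduced suffix of some reduced word for $w$ expressing $v$; to make sense of this and invert it I would show that any order ideal of $H(w)$ can be moved to such a terminal segment by commutation moves, and then use full commutativity of $w$ (\cite[Proposition~2.2]{St96}) to conclude the resulting element is independent of choices.

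The heart of the argument is a bubble-sort lemma: for any $I \in J(H(w))$, there is a reduced word for $w$ obtained from $\bw$ by commutation moves whose last $|I|$ letters are precisely those in positions $I$. I would prove this by repeatedly swapping adjacent pairs $s_{i_k}, s_{i_{k+1}}$ with $k \in I$ and $k+1 \in I^c$. Whenever such a pair occurs, $s_{i_k}$ and $s_{i_{k+1}}$ must commute: otherwise $k+1 \prec k$ in $H(w)$ (since $k+1 > k$ and $C_{i_k,i_{k+1}} < 0$), and then $k \in I$ combined with the order-ideal property would force $k+1 \in I$, a contradiction. Each legal commutation strictly decreases the number of inversions between $I^c$ and $I$, so the process terminates in a reduced word of the desired form.

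Given an order ideal $I$, the element $v_I$ obtained by reading the last $|I|$ letters of the rearranged word is independent of the particular sequence of commutation moves chosen, by full commutativity; this defines the inverse map $I \mapsto v_I$, and $v_I \leq_L w$ by construction. To see it is inverse to $v \mapsto H(v)$, note that for $v \leq_L w$ any factorization $\bw' \cdot \bv$ into reduced words identifies the positions of $\bv$ in a rearrangement of $\bw$ with a subset of $H(w)$ canonically isomorphic to $H(v)$. For order preservation, if $v_1 \leq_L v_2 \leq_L w$ then iterating the factorization produces a reduced word for $w$ in which the positions corresponding to $v_1$ form the last $\ell(v_1)$ of the last $\ell(v_2)$ positions, giving $H(v_1) \subseteq H(v_2)$; conversely, if $H(v_1) \subseteq H(v_2)$ as order ideals of $H(w)$, applying the key lemma to $H(v_2)$ in $H(w)$ and then to $H(v_1)$ inside the resulting suffix yields $v_1 \leq_L v_2$. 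The main obstacle is the commutation-move lemma, since everything else is formal bookkeeping around how full commutativity trivializes the dependence on choices of reduced word.
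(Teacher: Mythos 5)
Your proof is correct and follows what is essentially the standard route (Stembridge's own): an order ideal of $H(w)$ is exactly a set of letters that can be pushed to a terminal segment of the reduced word by commutation moves, the non-commuting-neighbours obstruction being ruled out by the order-ideal property, and full commutativity making the resulting suffix element well defined. The paper offers no proof of this statement --- it is quoted directly as Lemma~3.1 of \cite{St96} --- so there is nothing to contrast with; your sketch fills in the cited argument faithfully.
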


Another result of Stembridge is the following.
\begin{proposition}(\cite[Corollary 3.4]{St01}) \label{prop:rank}
	Let $ w $ be minuscule.  Then $ H(w) $ is a ranked poset.
\end{proposition}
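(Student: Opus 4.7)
The plan is to induct on $\ell = \ell(w)$. The base case $\ell = 0$ is vacuous. For the inductive step, fix a reduced word $\mathbf{w} = (s_{i_1}, \ldots, s_{i_\ell})$ for $w$ and set $w' := s_{i_2} \cdots s_{i_\ell}$. I would first verify that $w'$ is again minuscule: the criterion of \Cref{thm:lminuscule} --- that between every pair of consecutive occurrences of a generator there are exactly two non-commuting generators --- is inherited from $\mathbf{w}$, since deleting the initial letter does not disturb any pair of consecutive occurrences lying in the tail $(s_{i_2}, \ldots, s_{i_\ell})$. By induction, $H(w')$ is ranked, with rank function $\rho'$.

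Now $H(w) = H(w') \sqcup \{1\}$ with position $1$ a new maximum in $\prec$, and one checks directly from the definition of $\prec$ that the elements covered by $1$ in $H(w)$ are precisely the $\prec$-maximal beads of $H(w')_j$ as $j$ ranges over the neighbours of $i_1$ in the Dynkin diagram (for those $j$ with $H(w')_j$ nonempty). To extend $\rho'$ to a rank function $\rho$ on $H(w)$ by setting $\rho(1) := \rho'(b) + 1$ for any $b$ covered by $1$, it suffices to prove the following \textbf{Key Claim}: these elements all share a common $\rho'$-rank. My strategy for the Key Claim is to apply \Cref{thm:lminuscule} at the generator $s_{i_1}$: if $s_{i_1}$ reappears in $\mathbf{w}$ at some later position $k$, then exactly two of the generators in positions $2, \ldots, k-1$ fail to commute with $s_{i_1}$, and these lie on runners adjacent to $i_1$. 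Coupled with the inductive strengthening that $\rho'$ coincides with the bead-height in the abacus depiction of $H(w')$ (to be propagated alongside the ranked property), this tightly controls where the topmost beads on the runners adjacent to $i_1$ can sit, and lets one conclude that they all rest at a common height.

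The main obstacle will be the Key Claim, which calls for a careful local analysis of $\mathbf{w}$ near its first letter, branching on whether $i_1$ has one, two, or three neighbours in the Dynkin diagram (three only at the trivalent node in types $D_n$ or $E_n$) and on whether $s_{i_1}$ re-occurs in $\mathbf{w}$. The simply-laced hypothesis is essential here, as it guarantees that distinct neighbours of $i_1$ are themselves non-adjacent, preventing unwanted interactions between the involved runners. Once the Key Claim is established, the rank function on $H(w)$ is defined as the extension of $\rho'$ just described, and the induction closes.
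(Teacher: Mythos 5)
There is a genuine gap, and it sits exactly where you flagged the difficulty: the Key Claim is false as stated, and so is the inductive strengthening (rank $=$ abacus height) that you propose to prove it with. Take type $A_4$ and $\mathbf{w}=(s_2,s_1,s_3,s_4)$. Each generator occurs once, so $w$ is minuscule by \Cref{thm:lminuscule} (it is $\lambda$-minuscule for the non-dominant witness $\lambda=\omega_1-\omega_2+\omega_4$, but \Cref{prop:rank} is stated for all minuscule $w$). Here $w'=s_1s_3s_4$ and $H(w')$ is the disjoint union of the single bead on runner $1$ and the two-element chain on runners $3,4$; the abacus heights of the beads $s_1,s_3,s_4$ are $1,2,1$. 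The new element $1$ covers the top beads of runners $1$ and $3$, which have $\rho'$-ranks $1$ and $2$ respectively, so no extension of $\rho'$ ranks $H(w)$; indeed the physical-height function on $H(w)$ fails to be a rank function, since the cover $s_2\succ s_1$ jumps by two levels. Of course $H(w)$ \emph{is} ranked, but only after re-normalizing the rank on the component $\{s_1\}$ of $H(w')$: when $H(w')$ is disconnected its rank function is unique only up to an additive constant on each component, and the particular normalization handed to you by the induction need not be the one that extends over the new maximal element. (A secondary inaccuracy: the covers of $1$ are not all the top beads of the nonempty adjacent runners --- if $s_{i_1}$ reoccurs at position $k$, any adjacent runner whose beads all lie below position $k$ contributes no cover of $1$, since position $k$ is intermediate.)

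The paper gives no proof --- it quotes Stembridge --- so there is nothing internal to compare with, but any correct argument must use more than the count of letters not commuting with $s_{i_1}$ between its consecutive occurrences. The standard route is Stembridge's description of the interval between consecutive beads $a>b$ on a single runner (either a diamond $a>z_1,z_2>b$ with $z_1,z_2$ incomparable, or a chain $a>z_1>z_2>b$), which is also what the paper invokes in the proof of \Cref{thm:dommin-preproj}; this pins the rank down coherently along each runner (consecutive beads on a runner differ in rank by exactly $2$) instead of component by component. Your peeling-off induction can be repaired in the \emph{dominant} minuscule case: there \Cref{dommin-condition} forces the top bead to cover at most one element unless $s_{i_1}$ reoccurs, and when it reoccurs its two covers both cover the next bead on runner $i_1$, hence automatically share a rank. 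But that argument does not reach the proposition as stated, and as it stands your Key Claim is refuted by the example above.
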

We will refer to the values of the rank function on $ H(w) $ as \textbf{levels}.  In terms of the abacus model for the heap, this agrees with the physical notion of level. For example in Figure \ref{fig:3412heap}, the bottom bead on runner 2 has level 1, the bead on runner 1 has level 2, the bead on runner 4 has level 3, and so on.

\subsection{Crystals} \label{subsec:crystals}

In this section, we will recall the notion of a crystal of a representation and fix some more notation, following \cite{HK02}. 
Let $\fg$ be a semisimple Lie algebra containing a Cartan subalgebra $\fh$, let $P$ be the weight lattice, and let $\langle \phantom{0},\phantom{0} \rangle$ be the canonical pairing between $\fh^\ast$ and $\fh$.

\begin{definition}(\cite[Section~4.5]{HK02})
  \label{def:crystals}
  An \new{upper semi-normal} $\fg$-\new{crystal} is a set $B$ together with 
  \begin{itemize}
  \item a map $wt:B\to P$,
  \item maps $\varepsilon_i:B\to\bN$ and $\varphi_i:B\to\bN$ for $i\in I$, and
  \item raising operators $e_i: B\to B\sqcup \{ 0\}$ and lowering operators $f_i:B\to B\sqcup \{ 0\}$ for each $i\in I$,
  \end{itemize}
  satisfying the following axioms:
  \begin{enumerate}
  \item for $b\in B$, $\varphi_i(b)=\varepsilon_i(b)+\langle \alpha_i^\vee , wt(b) \rangle$, 
  \item for $b\in B$ with $e_i(b)\in B$, $  wt(e_i(b))=wt(b)+\alpha_i$,
  \item for $b\in B$ with $f_i(b)\in B$, $ wt(f_i(b))=wt(b)-\alpha_i$,
  \item for $b_1,b_2\in B$, $f_i(b_2)=b_1$ if and only if $b_2 = e_i(b_1)$, and
  \item for $i\in I$, $\varepsilon_i(b) = \max \left\{ n\geq 0 : e_i^n(b)\neq 0 \right\}$. 
  \end{enumerate}
\end{definition}

Note that we do not complement the last axiom with an analogous one for the lowering operators since we will be studying Demazure crystals.

Suppose that $ B$ is an upper semi-normal crystal.  A subset $ B' \subset B $ will be called an \new{upper semi-normal subcrystal} if it is stable under raising operators.  More precisely, for all $ b \in B' $ and for all $ i \in I $, if $ e_i(b) \ne 0 $ in $ B$, then $ e_i(b) \in B'$.  (We do not require the same condition for lowering operators.)  In this case, $ B' $ inherits the structure of an upper semi-normal crystal, with $ wt, e_i, \varepsilon_i, \varphi_i $ defined by restriction from $ B $, and where we define $ f_i(b) = 0 $ whenever $ f_i(b) \notin B' $.  

In this paper, \new{crystal} will always mean ``upper semi-normal crystal'' and \new{subcrystal} will always mean ``upper semi-normal subcrystal''.

Every $ \fg$-representation has a crystal.
This crystal can either be defined by using a crystal basis for a representation of the quantum group \cite[Chapter 3]{HK02}, or by using a perfect basis for the representation \cite{BK2}.  

\begin{definition}\label{def:demazureops}
  Let $B(\lambda)$ be the crystal of the irrep $V(\lambda)$ with highest weight element $b_\lambda$ and let $w\in W$. Let $\mathbf{w}=(s_{i_1},\ldots ,s_{i_\ell})$ be a reduced word for $w$. The subcrystal
  \[
    B_w(\lambda):=\bigcup_{m_k\in \mathbb{N}} f_{i_1}^{m_1}f_{i_2}^{m_2}\cdots f_{i_\ell}^{m_\ell}b_\lambda \subset B(\lambda)
  \]
  is the \textbf{Demazure crystal}.
\end{definition}

Note that $ B_w(\lambda) $ is the crystal of the Demazure module $V_w(\lambda) $, the $\mathfrak b $-subrepresentation of $ V(\lambda) $ generated by $w\cdot v(\lambda) $ \cite{Ka93} where $v(\lambda)$ is the highest weight element. 

\subsection{Tensor products of crystals}\label{subsec:tensorcrystals}
The most important feature of crystals that we will use is the tensor product rule. We recall it here to fix our conventions.

\begin{definition}(\cite[Theorem~4.4.1]{HK02})
  Let $B_1,B_2$ be crystals. Then the set 
  $$
  B_1\otimes B_2=\left\{b_1\otimes b_2 \; |\; b_1\in B_1, b_2\in B_2\right\}\,,
  $$
  with lowering operators
  \begin{align*}
    f_i(b_1\otimes b_2) &=
                          \begin{cases}
                            b_1\otimes f_i(b_2) &\text{ if }\varepsilon_i(b_1) < \varphi_i(b_2)\\
                            f_i(b_1)\otimes b_2 &\text{ if }\varepsilon_i(b_1)\geq \varphi_i(b_2)
                          \end{cases}                                                  
  \end{align*}
  (where we declare $b_1\otimes 0 = 0 \otimes b_2 =0$) and functions
  \begin{align*}
    wt(b_1\otimes b_2)&=wt(b_1)+wt(b_2), \\
    \varepsilon_i(b_1\otimes b_2) &= \max\left( \varepsilon_i(b_1),\varepsilon_i(b_2)-\langle \alpha_i^\vee, wt(b_1) \rangle \right),\\
    \varphi_i(b_1\otimes b_2) &= \max\left( \varphi_i(b_2),\varphi_i(b_1)+\langle \alpha_i^\vee, wt(b_2) \rangle \right),
  \end{align*}
  is a crystal.
\end{definition}
The tensor product rule has an extension to tensor products with more than two tensor factors known as the \textbf{signature rule}. We recall it here.

\begin{corollary}\label{thm:signrule}
  (\cite{tingley2008three})
The set $\bigotimes_{j=1}^n B_j$ consists of tuples  
$b_1\otimes b_2 \otimes \ldots \otimes b_n$ with $b_j\in B_j$ for $j = 1,\dots,n$. 
To apply a lowering (resp.\ raising) operator $f_i$ (resp.\ $e_i$) to such an element, we first compute its sign pattern: 
working our way from left to right, for each factor $b_j$, we record $\varphi_i(b_j)$ many $+$ signs, followed by $\varepsilon_i(b_j)$ many $-$ signs. 
We then cancel all $-+$ pairs getting a sequence of the form $+\cdots +-\cdots -$. 
The signature rule says that $f_i$ (resp.\ $e_i$) acts 
on the element contributing the rightmost $+$ (resp.\ leftmost $-$) in this sequence, if it exists, and by zero otherwise. 
\end{corollary}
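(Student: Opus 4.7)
The plan is to induct on $n$, using the two-factor tensor product rule of the preceding definition as the base case. For $n = 2$, after the $\varepsilon_i(b_1)$ minuses of $b_1$ cancel against the $\varphi_i(b_2)$ pluses of $b_2$ at the interface, the rightmost surviving $+$ lies in $b_2$ precisely when $\varepsilon_i(b_1) < \varphi_i(b_2)$, matching the case split defining $f_i(b_1 \otimes b_2)$; the claim for $e_i$ is symmetric. To make the induction go through, I would also strengthen the inductive hypothesis to record that $\varphi_i(b_1 \otimes \cdots \otimes b_{n-1})$ and $\varepsilon_i(b_1 \otimes \cdots \otimes b_{n-1})$ equal the number of $+$ and $-$ signs, respectively, in the reduced signature of the grouped element.

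For the inductive step, I would write $b_1 \otimes \cdots \otimes b_n = (b_1 \otimes \cdots \otimes b_{n-1}) \otimes b_n$, which is legitimate because the crystal tensor product is associative (a direct check from the two-factor formulas). The key combinatorial point is that reducing a $\{+,-\}$-word by successive cancellation of adjacent $-+$ subpatterns produces a canonical normal form $+^p -^q$ independent of the cancellation order. Hence the reduced signature of $b_1 \otimes \cdots \otimes b_n$ coincides with the reduced form of the concatenation of the reduced signature of $b_1 \otimes \cdots \otimes b_{n-1}$ (which by induction is $+^{\varphi_i} -^{\varepsilon_i}$ for the grouped element) with the raw signature of $b_n$. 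Applying the two-factor rule, $f_i$ either acts on $b_n$ (when the rightmost surviving $+$ after the interface cancellation lies in $b_n$) or on the grouped factor; in the latter case the inductive hypothesis identifies the active $b_j$ with the one contributing the rightmost $+$ in the reduced signature of the first $n-1$ factors, which by order-independence is also the rightmost $+$ of the full reduced signature. The argument for $e_i$ is the mirror image, replacing ``rightmost $+$'' with ``leftmost $-$''.

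The main obstacle, such as it is, is a careful verification that the $\max$-formulas $\varepsilon_i(b_1\otimes b_2) = \max(\varepsilon_i(b_1), \varepsilon_i(b_2) - \langle \alpha_i^\vee, wt(b_1)\rangle)$ and $\varphi_i(b_1\otimes b_2) = \max(\varphi_i(b_2), \varphi_i(b_1) + \langle \alpha_i^\vee, wt(b_2)\rangle)$ of the preceding definition genuinely count surviving pluses and minuses after $-+$ cancellation. This is most cleanly seen via a lattice-path picture in which $+$ is an up-step and $-$ a down-step: the surviving minus count equals $-\min_k \text{(height after } k \text{ steps)}$ along the path, and the identity $\varphi_i = \varepsilon_i + \langle \alpha_i^\vee, wt\rangle$ then recovers the plus count. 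Once this calibration is in hand, the induction closes with no further subtlety.
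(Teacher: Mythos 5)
The paper does not actually prove this corollary --- it is quoted from Tingley's paper --- so your induction on the number of factors is supplying a genuine argument, and it is the standard one: reduce to the two-factor rule via associativity of $\otimes$, carry along the strengthened hypothesis that $\varepsilon_i$ and $\varphi_i$ of a grouped factor count the surviving signs of its reduced signature, and use confluence of the cancellation to splice the reduced signature of $b_1\otimes\cdots\otimes b_{n-1}$ with the raw signature of $b_n$. That skeleton is correct, and the base case and the bookkeeping in the inductive step are handled properly.

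The one step that would fail as written is exactly the one you single out as the crux: the calibration of the $\max$-formulas against the signature used in the Corollary. The formulas $\varepsilon_i(b_1\otimes b_2)=\max(\varepsilon_i(b_1),\varepsilon_i(b_2)-\langle\alpha_i^\vee,wt(b_1)\rangle)$ and $\varphi_i(b_1\otimes b_2)=\max(\varphi_i(b_2),\varphi_i(b_1)+\langle\alpha_i^\vee,wt(b_2)\rangle)$ count the survivors for the \emph{mirror} convention (record $\varepsilon_i(b_j)$ minuses \emph{before} $\varphi_i(b_j)$ pluses in each factor and cancel $+-$ pairs, i.e.\ Kashiwara's convention for $f_i$); they do not count the survivors of the $-+$-cancelled word $+^{\varphi_i(b_1)}-^{\varepsilon_i(b_1)}+^{\varphi_i(b_2)}-^{\varepsilon_i(b_2)}$ used here. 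Concretely, in the two-element crystal $B(\omega_1)=\{v_+,v_-\}$ for $\mathfrak{sl}_2$, the element $v_+\otimes v_-$ has signature $+-$ with nothing to cancel, so one plus and one minus survive, while the printed formulas give $\varepsilon_i=\varphi_i=0$; note these formulas are then also incompatible with the printed operator rule, which gives $f_i(v_+\otimes v_+)=v_+\otimes v_-$ and hence $e_i(v_+\otimes v_-)\neq 0$. Your lattice-path identity ``surviving minuses $=-\min_k(\text{height})$'' is likewise the one for $+-$ cancellation; for $-+$ cancellation the count is $\max_k(\text{height})$ minus the final height. The repair is routine: for the convention of the Corollary the correct statistics are $\varepsilon_i(b_1\otimes b_2)=\max(\varepsilon_i(b_2),\varepsilon_i(b_1)-\langle\alpha_i^\vee,wt(b_2)\rangle)$ and $\varphi_i(b_1\otimes b_2)=\max(\varphi_i(b_1),\varphi_i(b_2)+\langle\alpha_i^\vee,wt(b_1)\rangle)$ (these are forced by axioms (1), (4) and (5) once the operator rule is fixed), and with these in place your calibration, strengthened hypothesis, and inductive step all close as you describe.
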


Considering such tensor products is very useful, since we can embed any crystal $B(\lambda) $ into a tensor product of simpler crystals $ B(\lambda^1) \otimes \cdots \otimes B(\lambda^n) $ where $ \lambda = \lambda^1 + \cdots + \lambda^n $ is a composition. This procedure can be used to define the crystal structure on semistandard Young tableau in type $A$ (see \cite{HK02}) and we will use it below for reverse plane partitions.

\subsection{Minuscule representations}\label{sec:minusculereps}

Let $J\subseteq I$ and let $W_J\le W$ denote the subgroup generated by $\{s_j : j\in J\}$.

\begin{proposition}
  (\cite[Corollary~2.4.5]{BB05})
  Every coset $wW_J$ has unique element of minimal length.
\end{proposition}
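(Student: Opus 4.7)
The plan is to prove the standard fact that $wW_J$ has a unique minimum by isolating the set of minimal coset representatives and showing that length is additive with respect to the factorisation across $W^J\cdot W_J$.

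First, I would define
\[
W^J := \left\{ u \in W : \ell(us_j) > \ell(u) \text{ for all } j \in J \right\},
\]
and observe that any length-minimising element of $wW_J$ must lie in $W^J$: if $\ell(us_j) < \ell(u)$ for some $j \in J$, then $us_j \in wW_J$ is strictly shorter, contradicting minimality. So existence is immediate (well-ordering of $\bN$) and it suffices to prove that $W^J \cap wW_J$ is a singleton.

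The heart of the argument is the following length-additivity lemma: if $u \in W^J$ and $v \in W_J$, then $\ell(uv) = \ell(u) + \ell(v)$. I would prove this by induction on $\ell(v)$. The base case $v=e$ is trivial. For the inductive step, write $v = v' s_j$ with $j \in J$ and $\ell(v) = \ell(v')+1$, so by induction $\ell(uv') = \ell(u) + \ell(v')$. We always have $\ell(uv) \le \ell(uv') + 1$, and the opposite inequality is what we need. Suppose for contradiction that $\ell(uv) \le \ell(uv')$, i.e.\ $\ell(uv's_j) \le \ell(uv')$. By the strong exchange condition applied to a reduced word for $uv'$ of the form (reduced word for $u$)(reduced word for $v'$), some letter can be deleted. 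If the deleted letter comes from the $v'$ part, one obtains a shorter expression for $v$ in $W_J$, contradicting $\ell(v') = \ell(v)-1$ being correct. If the deleted letter comes from the $u$ part, one obtains $u s_j = u' v'^{-1}$ for some $u'$ shorter than $u$ (i.e.\ $\ell(us_j) < \ell(u)$ after rearranging), contradicting $u \in W^J$. Hence $\ell(uv) = \ell(u)+\ell(v)$.

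Given the lemma, uniqueness is a one-liner: if $u, u' \in W^J \cap wW_J$, then $u' = uv$ with $v \in W_J$, so $\ell(u') = \ell(u) + \ell(v)$ and also $\ell(u) = \ell(u') + \ell(v^{-1}) = \ell(u') + \ell(v)$, forcing $\ell(v) = 0$ and $v = e$. The main obstacle is the exchange-condition argument in the inductive step of the lemma, which is really where the Coxeter-theoretic content sits; everything else is bookkeeping. Since this is an entirely standard computation (carried out in \cite[\S2.4]{BB05}), I would in practice simply cite that reference rather than reproduce the exchange-condition manipulation in full.
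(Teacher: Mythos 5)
The paper offers no proof of this statement — it is quoted verbatim from \cite[Corollary~2.4.5]{BB05} — so the only thing to compare against is the textbook argument you are reconstructing, and your outline is indeed that standard argument: existence by well-ordering, a length-additivity lemma $\ell(uv)=\ell(u)+\ell(v)$ proved via the exchange condition, and uniqueness as a two-line consequence. The architecture is sound, but one step is imprecise. In the case where the exchange condition deletes a letter from the $u$-part of the reduced word for $uv'$, the identity you actually obtain is $u\,(v's_jv'^{-1}) = \hat u$ with $\ell(\hat u) < \ell(u)$, where $v's_jv'^{-1}$ is a reflection in $W_J$ that need not be simple; this does not rearrange to $\ell(us_j)<\ell(u)$ for some $j\in J$, so it does not directly contradict the right-descent condition you used to define $W^J$. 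What it does contradict is minimality of $\ell(u)$ in the coset $uW_J$ (since $\hat u \in uW_J$). The repair is harmless and standard: either state and prove the additivity lemma for $u$ a minimal-length element of its coset — which is all the uniqueness argument requires, since two minimal elements $u$ and $u'=uv$ then force $\ell(v)=0$ — or first establish (e.g.\ via the lifting property) that the descent-defined $W^J$ coincides with the set of coset-minimal elements before running your induction. With that adjustment the proof is complete and matches the argument in the cited reference.
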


Denote by $w^J$ the element of minimal length in $wW_J$ and by $W^J$ the set of \textbf{minimal length representatives} for $\left\{wW_J : w\in W\right\}$. Note that $ W^J = \left\{ v \in W : v \leq_L w_0^J \right\}$.  

Recall that an irreducible $\fg$-representation $ V $ is said to be \new{minuscule} if $W$ acts transitively on the set of weights of $V$.  We say that $ \lambda \in P_+ $ is \new{minuscule} if $ V(\lambda) $ is a minuscule representation.  Equivalently, $ \lambda $ is minuscule if and only if $ \lambda - \alpha \notin P_+ $ for any root $ \alpha $.

\begin{remark}
	If $ \fg $ is simple, then every minuscule weight $ \lambda $ is fundamental; that is, there exists $ p \in I $ such that $ J  = I\setminus\{p\}$.  However, for semisimple $ \fg$, this is no longer the case.  For example, if we take $ \fg = \mathfrak{sl}_2 \oplus \mathfrak{sl}_2$, then the sum of the two fundamental weights is minuscule. 
\end{remark}

Let 
$ \lambda = \sum_{j \notin J} \omega_j $ where $\omega_j$ denotes the $j$th fundamental weight.
We have the following theorem, also due to Stembridge, connecting minuscule representations and dominant minuscule elements of the Weyl group.
\begin{theorem}(\cite[Theorem~6.1(b) and Theorem~7.1]{St96})
    Let $ J$ and $\lambda $ be as above.  Then $\lambda$ is minuscule if and only if every element of $W^J$ is fully commutative.  Moreover, in this case, every element of $W^J$ is dominant $\lambda$-minuscule, and the left weak order and strong order on $W^J$ coincide. 
\end{theorem}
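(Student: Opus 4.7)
The proof naturally splits into three claims: (i) $\lambda$ minuscule implies every $w\in W^J$ is dominant $\lambda$-minuscule (which, by the proposition immediately following \Cref{def:minuscule}, also yields full commutativity); (ii) the converse, that if every $w\in W^J$ is fully commutative then $\lambda$ is minuscule; and (iii) the coincidence of left weak order and strong order on $W^J$ when $\lambda$ is minuscule. Parts (i) and (ii) together give the stated ``if and only if'', while (i) also gives the dominant $\lambda$-minuscule conclusion.

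For (i), fix $w\in W^J$ with a reduced word $(s_{i_1},\ldots,s_{i_\ell})$ and, for each $k$, set $v_k=s_{i_{k+1}}\cdots s_{i_\ell}$ and $\beta_k = v_k^{-1}\alpha_{i_k}$. Reducedness of $s_{i_k}v_k$ makes $\beta_k$ a positive root, so $\langle \alpha_{i_k}^\vee,v_k\lambda\rangle = \langle\beta_k^\vee,\lambda\rangle \in \{0,1\}$, the value $-1$ being excluded by dominance of $\lambda$ combined with positivity of $\beta_k$. To rule out $0$, I would use that when $\lambda$ is minuscule any positive root $\beta$ with $\langle\beta^\vee,\lambda\rangle = 0$ is supported on simple roots indexed by $J$, so the reflection $s_{\beta_k}$ lies in $W_J$ and stabilizes $\lambda$; combining this with the characterization that $W^J$ consists of those $u\in W$ with $u\alpha_j$ a positive root for all $j\in J$, applied to an appropriate initial segment of the reduced word, contradicts $w\in W^J$. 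Since $\lambda$ is dominant, this shows $w$ is dominant $\lambda$-minuscule.

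For (ii) I argue by contrapositive. If $\lambda$ fails to be minuscule, there is a positive root $\beta$ with $\langle\beta^\vee,\lambda\rangle\geq 2$, which in turn produces a weight $\mu$ of $V(\lambda)$ and a simple coroot $\alpha_i^\vee$ with $\langle\alpha_i^\vee,\mu\rangle \geq 2$. Tracing a path in the weight diagram of $V(\lambda)$ that visits $\mu$ yields a reduced word for some $w\in W^J$; at the step where this pairing-$2$ weight is encountered, the word must contain a subword $s_i s_j s_i$ with $j$ adjacent to $i$ in the Dynkin diagram, obstructing full commutativity of $w$. For (iii), the inclusion $\leq_L\,\subseteq\,\leq$ is automatic. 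For the reverse containment on $W^J$, \Cref{thm:JH(w)weakorder} identifies $\leq_L$ with containment of order ideals in $H(w_0^J)$ (where $w_0^J$ is the longest element of $W^J$), and Tits' subword characterization of Bruhat order, when specialized to fully commutative elements, reduces to the same order-ideal containment. The main obstacle will be step (ii): turning a weight with coroot-pairing at least $2$ into an explicit braid subword inside a reduced expression for an element of $W^J$ requires genuine case-by-case analysis, whereas (i) and (iii) are essentially bookkeeping once the right reformulations are in hand.
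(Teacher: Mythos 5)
First, a point of comparison: the paper offers no proof of this statement --- it is imported wholesale from Stembridge \cite[Theorems 6.1(b) and 7.1]{St96} --- so I can only assess your argument on its own terms. Your part (i) is essentially correct. The roots $\beta_k = v_k^{-1}\alpha_{i_k}$ are exactly the inversions of $w$; in the simply-laced setting $\langle \beta_k^\vee,\lambda\rangle$ is the sum of the coefficients of $\beta_k$ on the simple roots indexed by $I\setminus J$, so vanishing of this pairing forces $\beta_k$ to be a positive root of the parabolic subsystem $\Phi_J$, which $w\in W^J$ must send to a positive root --- contradicting $w\beta_k<0$. Combined with $\langle\beta^\vee,\lambda\rangle\le 1$ for minuscule $\lambda$, this gives $\langle\alpha_{i_k}^\vee, v_k\lambda\rangle = 1$ for every $k$, i.e.\ dominant $\lambda$-minusculity, and full commutativity then follows from the quoted Proposition 2.1 of \cite{St01}.

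Parts (ii) and (iii), however, each contain a genuine gap, and in both cases the gap is precisely where the content of Stembridge's theorems lies. In (ii), nothing in your sketch explains why a weight $\mu\in W\lambda$ with $\langle\alpha_i^\vee,\mu\rangle\ge 2$ forces a braid subword $s_is_js_i$ in a reduced expression of some element of $W^J$: an orbit step of size $2$ is not by itself an obstruction to full commutativity, and you give no mechanism for locating the offending element of $W^J$ or the offending reduced word. You acknowledge this is the main obstacle, but as written it is an assertion, not an argument. In (iii), the claim that Tits' subword characterization of Bruhat order, ``specialized to fully commutative elements, reduces to order-ideal containment'' is false in general: a reduced subword of a linear extension of $H(v)$ corresponds to an arbitrary subset of the heap, not an order ideal. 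Already for $v=s_1s_2$ in type $A_2$ (which is fully commutative and equals $w_0^{\{1\}}$ for the minuscule weight $\omega_2$), the reduced subword $s_1$ gives $s_1\le v$ in Bruhat order while $\{s_1\}$ is not an order ideal and $s_1\not\le_L v$; Bruhat and weak order genuinely differ on the set of all fully commutative elements. The theorem's conclusion holds only after restricting both sides to $W^J$, and the real work is to show that for $u,v\in W^J$ with $u\le v$ one can always choose the reduced subword for $u$ to be an order ideal of $H(v)$ --- a step your argument never addresses and which requires the minuscule hypothesis, not merely full commutativity.
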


Now assume that $ \lambda $ is minuscule. All the weight spaces $V(\lambda)_\mu$ are $1$-dimensional, and the weights lie in the $W$-orbit of $\lambda$.  As this orbit is in bijection with $ W / W_J = W^J$, the following result is immediate. 

\begin{proposition} \label{prop:WJcrystal}
The set 
$W^J$ is a model for the crystal $B(\lambda)$ with weight map given by $ wt(v) = v \lambda $ and following raising and lowering operators.
\begin{gather*}
  f_i(v)=
  \begin{cases}
    s_i v &\text{ if } s_i v > v  \text{ and } s_i v \in W^J \\
    0 &\text{ otherwise,}
  \end{cases}
  \qquad
  e_i(v)=
  \begin{cases}
    s_i v &\text{ if } s_i v < v\\
    0 &\text{ otherwise.}
  \end{cases}
\end{gather*}  
\end{proposition}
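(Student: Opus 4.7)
The plan is to transport the crystal structure on $B(\lambda)$ to $W^J$ along $v \mapsto v\lambda$ and to verify that, under this identification, the operators in the statement implement the Kashiwara ones. Since $\lambda = \sum_{j \notin J} \omega_j$ is dominant with $W$-stabilizer equal to $W_J$, the assignment $v \mapsto v\lambda$ is a bijection $W^J \to W\lambda$; and because $\lambda$ is minuscule, $W\lambda$ is the full weight support of $V(\lambda)$ with each weight space one-dimensional. Composing with the indexing of $B(\lambda)$ by a basis of weight vectors gives $W^J \leftrightarrow B(\lambda)$, forcing $wt(v) = v\lambda$ and making axioms (2)--(4) of \Cref{def:crystals} automatic.

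The structural input I would use is the minuscule bound
\[
  \langle \alpha_i^\vee, v\lambda\rangle \;=\; \langle v^{-1}\alpha_i^\vee, \lambda\rangle \;\in\; \{-1, 0, 1\}
\]
for all $v \in W$ and $i \in I$.  Consequently every $\alpha_i$-string in $B(\lambda)$ has length at most one, so $f_i(v\lambda)$ is nonzero exactly when $\langle \alpha_i^\vee, v\lambda\rangle = 1$, and then $f_i(v\lambda) = v\lambda - \alpha_i = s_i(v\lambda) = (s_iv)\lambda$; symmetrically, $e_i(v\lambda) = (s_iv)\lambda$ exactly when $\langle \alpha_i^\vee, v\lambda\rangle = -1$.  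Axiom (5) and the compatibility $\varphi_i = \varepsilon_i + \langle \alpha_i^\vee, wt\rangle$ of axiom (1) then reduce to the observation that $\varepsilon_i(v), \varphi_i(v) \in \{0,1\}$, verified by a case check on the three possible values of the pairing.

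What remains is to rewrite these three cases in terms of the length/coset data in the statement. Since $W_J = \mathrm{Stab}_W(\lambda)$, one has $s_iv \notin W^J$ exactly when $s_iv \in vW_J$, equivalently $s_iv\lambda = v\lambda$, equivalently $\langle \alpha_i^\vee, v\lambda\rangle = 0$. For the Bruhat comparison, $s_iv > v$ is equivalent to $v^{-1}\alpha_i$ being a positive root, and together with dominance of $\lambda$ this forces $\langle v^{-1}\alpha_i^\vee, \lambda\rangle \in \{0,1\}$. These combine to yield $(s_iv > v \text{ and } s_iv \in W^J) \Leftrightarrow \langle \alpha_i^\vee, v\lambda\rangle = 1$, matching the formula for $f_i$. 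For $e_i$: if $s_iv < v$ then $v^{-1}\alpha_i$ is a negative root, so $\langle \alpha_i^\vee, v\lambda\rangle \in \{-1,0\}$; the value $0$ would place $s_iv$ in $vW_J$ with $\ell(s_iv) < \ell(v)$, contradicting minimality of $v \in W^J$, so the pairing equals $-1$. A short check shows $s_iv \in W^J$ is then automatic: if $(s_iv)\alpha_k < 0$ for some $k \in J$, then since $v\alpha_k > 0$ and $s_i$ flips sign on a positive root only at $\alpha_i$, we would get $v\alpha_k = \alpha_i$, i.e.\ $v^{-1}\alpha_i = \alpha_k > 0$, contradicting $v^{-1}\alpha_i < 0$. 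The main delicate point I anticipate is this last redundancy check for $e_i$, together with the translation between the length inequality $s_iv > v$ and the sign of $\langle v^{-1}\alpha_i^\vee, \lambda\rangle$; after that the axioms drop out of the minuscule case analysis directly.
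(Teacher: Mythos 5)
Your argument is correct and follows the same route the paper takes: the paper simply observes that all weight spaces of $V(\lambda)$ are one-dimensional and that the weights form the single orbit $W\lambda \cong W/W_J = W^J$, and declares the proposition immediate. Your write-up supplies the details left implicit there — the bound $\langle\alpha_i^\vee, v\lambda\rangle \in \{-1,0,1\}$ forcing all $i$-strings to have length at most one, and the translation of the sign of that pairing into the length and coset conditions — and these checks are all sound.
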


\begin{remark} \label{rem:orderideals}
From \Cref{thm:JH(w)weakorder}, we can view this as a crystal structure on the set of order ideals $J(H(w_0^J))$. 
In the language of bead configurations, which were introduced in \cref{sec:domminheaps}, applying the lowering operator $f_i$ (resp.\ the raising operator $e_i$) to a bead configuration adds a bead to runner $i$ (resp.\ removes a bead from runner $i$) if the result defines an order ideal in the heap $H(w_0^J)$. 
\end{remark}

\subsection{Minuscule Demazure crystals}
We will now generalize \Cref{prop:WJcrystal} to certain Demazure crystals.  Let $ \lambda \in P_+$ and $ w \in W $. We say that $ V_w(\lambda) $ is a \new{minuscule Demazure module}, if all weights of $ V_w(\lambda) $ lie in the Weyl orbit of $ \lambda $.

In this case, the corresponding crystal $ B_w(\lambda) $ is called a \new{minuscule Demazure crystal}.  If $ \lambda $ is minuscule, then it is easy to see that $ B_w(\lambda) $ is minuscule for all $ w \in W $.  However, we can also get minuscule Demazure crystals from dominant minuscule Weyl group elements.

\begin{proposition} \label{prop:dominant-minuscule-demazure}
  Let $ \lambda \in P_+$, $ w \in W $ and let $J= \left\{ j : s_j \lambda = \lambda \right\}$.
	
\begin{enumerate}
	\item	\label{it:if} If $ w$ is $\lambda$-minuscule, then  $B_w(\lambda)$ is a minuscule Demazure crystal.  Moreover, the set $[e,w]^J=\{v\in W^J : v\leq_L w\}$ is a model for the crystal $ B_w(\lambda) $ with weight map given by $ wt(v) = v \lambda $ and the following raising and lowering operators:
	\begin{gather*}
		f_i(v)=
		\begin{cases}
			s_i v &\text{ if } s_i v > v  \text{ and } s_i v \le_L w \\
			0 &\text{ otherwise,}
		\end{cases}
		\qquad
		e_i(v)=
		\begin{cases}
			s_i v &\text{ if } s_i v < v\\
			0 &\text{ otherwise.}
		\end{cases}
	\end{gather*}  

  \item	\label{it:only} Conversely, suppose $ B_w(\lambda) $ is a minuscule Demazure crystal and $ w \in W^J $.  Then $ w $ is $\lambda$-minuscule.
\end{enumerate}
\end{proposition}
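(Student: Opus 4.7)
The plan is to prove part~(\ref{it:if}) by induction on $\ell(w)$ and part~(\ref{it:only}) by a descending induction along a fixed reduced word. The common technical tool is the following norm identity valid in simply-laced type: for $\mu \in W\lambda$, one has $\mu - \alpha_i \in W\lambda$ if and only if $\langle \alpha_i^\vee, \mu \rangle = 1$, since $|\mu - \alpha_i|^2 = |\lambda|^2 - 2 \langle \alpha_i^\vee, \mu \rangle + 2$. I also note that for $w \in W^J$ and $v \leq_L w$, automatically $v \in W^J$: otherwise $v = v' s_j$ with $j \in J$ and $\ell(v') < \ell(v)$ would force $ws_j < w$, contradicting $w \in W^J$. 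Hence $[e,w]^J = \{v : v \leq_L w\}$, which is in bijection with $J(H(w))$ via \Cref{thm:JH(w)weakorder}.

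For part~(\ref{it:if}), the base case $w = e$ is immediate. For the inductive step, fix a reduced word and set $w' = s_{i_2} \cdots s_{i_\ell}$, which is again $\lambda$-minuscule and lies in $W^J$ by the length argument above. The inductive hypothesis provides a bijection $\Psi' : [e, w']^J \to B_{w'}(\lambda)$, $v \mapsto b_v$, implementing the stated crystal structure. Using the Demazure recursion $B_w(\lambda) = \bigcup_{m \geq 0} f_{i_1}^m B_{w'}(\lambda)$, I need to identify the new elements added. The key claim is that for each $v \in [e, w']^J$ one has $\varphi_{i_1}(b_v) \leq 1$, with $\varphi_{i_1}(b_v) = 1$ if and only if $\langle \alpha_{i_1}^\vee, v\lambda \rangle = 1$, which in turn (via the heap bijection and the fact that $H(w) = H(w') \sqcup \{x_1\}$ for a new top bead $x_1$ on runner $i_1$) is equivalent to $H(v) \cup \{x_1\}$ being an order ideal in $H(w)$, i.e.\ to $s_{i_1} v \in W^J$, $s_{i_1} v > v$, and $s_{i_1} v \leq_L w$. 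Setting $\Psi(s_{i_1} v) := f_{i_1} b_v$ in this case extends $\Psi'$, and the formulas for $e_i, f_i$ then follow.

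The key claim reduces to the heap calculation $\langle \alpha_{i_1}^\vee, v\lambda \rangle = \langle \alpha_{i_1}^\vee, \lambda \rangle - 2 |H(v)_{i_1}| + \sum_{j \sim i_1} |H(v)_j|$, combined with Stembridge's structural results \Cref{thm:lminuscule} and \Cref{dommin-condition}, which constrain how beads on runner $i_1$ and its neighbors can accumulate in a minuscule heap, together with the ranked structure of \Cref{prop:rank}. Alternatively, the cardinality $|B_w(\lambda)| = |[e,w]^J|$ can be extracted from the Demazure character formula for $\lambda$-minuscule $w$, which yields $\ch V_w(\lambda) = \sum_{v \in [e,w]^J} e^{v\lambda}$, after which the crystal data can be read off the recursion.

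For part~(\ref{it:only}), fix a reduced word $(s_{i_1}, \ldots, s_{i_\ell})$ and prove $\langle \alpha_{i_k}^\vee, s_{i_{k+1}} \cdots s_{i_\ell} \lambda \rangle = 1$ by descending induction on $k$. At $k = \ell$: if the pairing were $0$ then $s_{i_\ell} \in W_J$ and $w s_{i_\ell} < w$ contradicts $w \in W^J$; if the pairing were $\geq 2$ then $f_{i_\ell} b_\lambda \in B_w(\lambda)$ would have weight $\lambda - \alpha_{i_\ell} \notin W\lambda$ by the norm identity, contradicting the minuscule assumption. For the inductive step, the hypothesis gives $s_{i_{k+1}} \cdots s_{i_\ell} b_\lambda = f_{i_{k+1}} \cdots f_{i_\ell} b_\lambda \in B_w(\lambda)$; an analogous $W^J$-length argument applied to the decomposition $w = (s_{i_1} \cdots s_{i_{k-1}}) s_{i_k} (s_{i_{k+1}} \cdots s_{i_\ell})$ rules out the pairing being $0$, so $f_{i_k}$ applied to the extremal vector is nonzero and lies in $B_w(\lambda)$, and its weight in $W\lambda$ forces the pairing to equal $1$ by the norm identity. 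The main obstacle of the whole argument is the heap-to-pairing translation in part~(\ref{it:if}); the rest amounts to routine bookkeeping.
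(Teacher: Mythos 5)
Your overall architecture matches the paper's: part~(\ref{it:if}) by induction on $\ell(w)$, peeling off the leftmost letter and analyzing how $f_{i_1}$ acts on the extremal vectors $b_v$, and part~(\ref{it:only}) by verifying the pairing condition letter by letter using that $w\in W^J$ rules out pairing $0$ and minuscularity rules out pairing $\geq 2$. Part~(\ref{it:only}) as you present it is essentially the paper's argument and is sound. The problem is in part~(\ref{it:if}), where your ``key claim'' is false as stated. You assert that $\langle \alpha_{i_1}^\vee, v\lambda\rangle = 1$ is equivalent to $H(v)\cup\{x_1\}$ being an order ideal, where $x_1$ is the \emph{top} bead of runner $i_1$. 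Take type $A_3$, $\lambda=\omega_2$, $w=s_2s_1s_3s_2$, $w'=s_1s_3s_2$, and $v=e$: then $\langle\alpha_2^\vee, v\lambda\rangle=\langle\alpha_2^\vee,\omega_2\rangle=1$, but $x_1$ is the maximal element of $H(w)$, so $\{x_1\}$ is not an order ideal. The underlying confusion is that left multiplication by $s_{i_1}$ corresponds (via \Cref{thm:JH(w)weakorder}) to adding the \emph{lowest} available bead on runner $i_1$, not $x_1$; in the example $f_2 b_e = b_{s_2}$, which is already in $B_{w'}(\lambda)$ and corresponds to the bottom bead of runner $2$. Consequently your rule $\Psi(s_{i_1}v):=f_{i_1}b_v$ ``in this case'' is keyed to the wrong condition, and the identification of which $f_{i_1}b_v$ are genuinely new elements of $B_w(\lambda)$ is not established.

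What actually has to be proved is: (a) $\langle\alpha_{i_1}^\vee, v\lambda\rangle \leq 1$ for \emph{every} $v\leq_L w'$, so that $B_w(\lambda)=B_{w'}(\lambda)\cup f_{i_1}B_{w'}(\lambda)$; and (b) whenever equality holds, $s_{i_1}v\leq_L w$, so that the new elements $f_{i_1}b_v=b_{s_{i_1}v}$ land in $[e,w]^J$ and exhaust $[e,w]^J\setminus[e,w']^J$. Neither statement falls out of the bead-count $\langle\alpha_{i_1}^\vee,v\lambda\rangle=\langle\alpha_{i_1}^\vee,\lambda\rangle-2|H(v)_{i_1}|+\sum_{j\sim i_1}|H(v)_j|$ together with citations of \Cref{thm:lminuscule} and \Cref{dommin-condition}; point (b) in particular is the crux. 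The paper handles (a) and (b) simultaneously by a reduced-word argument: writing $w=s_{j_1}\cdots s_{j_m}$ with $v=s_{j_{r+1}}\cdots s_{j_m}$ and $j_1=i_1$, and taking $p\leq r$ maximal with $j_p=i_1$, the $\lambda$-minuscule condition at position $p$ gives $\langle\alpha_{i_1}^\vee,v\lambda\rangle = 1+\sum_{q=p+1}^{r}\langle\alpha_{i_1}^\vee,\alpha_{j_q}\rangle\leq 1$, and equality forces $s_{i_1}$ to commute with every $s_{j_q}$ for $p<q\leq r$, which allows one to slide $s_{i_1}$ rightward in the word and conclude $s_{i_1}v\leq_L w$. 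Your proposal contains no substitute for this commutation step, and the alternative route you mention via the Demazure character formula for $\lambda$-minuscule $w$ is likewise only asserted (it is essentially equivalent to the statement being proved). To repair the proof you should drop the false equivalence and supply an argument for (a) and (b) along these lines.
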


As in \Cref{rem:orderideals}, if $ w $ is $ \lambda$-minuscule, this gives a model for $ B_w(\lambda) $ using the set of order ideals $J(H(w))$. 

\begin{proof}
  For 1, we proceed by induction on the length $\ell$ of $w $. Choose $ i $ such that $ s_i w < w $.  Then $ s_i w $ is also $\lambda$-minuscule.
  
  Let $ b \in B_w(\lambda) $.  Then by the definition of Demazure crystal, we can write $ b = f_i^m b' $ for some $b' \in B_{s_iw}(\lambda) $. 
  
  By induction, $ B_{s_iw}(\lambda) $ is a minuscule Demazure crystal, and so $ wt(b') = v\lambda $ for some $ v \in W^J $ with $ v \le_L s_i w $.  We claim that $ \langle \alpha_i^\vee, v \lambda \rangle \le 1 $.
  
  Using $ v \le_L s_i w $, we can form a reduced word $ s_i w = s_{j_2} \dots s_{j_r} s_{j_{r+1}} \dots s_{j_m} $ where $ v = s_{j_{r+1}} \dots s_{j_m} $. Thus $s_{j_1} \dots s_{j_m}$ is a reduced word for $ w$, where we declare $ j_1 = i$.  Choose $ p $ maximal such that $ p \le r $ and $ j_p = i $ (such a $ p $ exists since $ p = 1 $ satisfies these two conditions).  Then since $ w$ is $ \lambda$-minuscule, we conclude that
  $$ \langle \alpha_i^\vee, v \lambda - \alpha_{j_{p+1}} - \cdots - \alpha_{j_r} \rangle = 1 $$
  However, since $ j_q \ne i $ for $ q = p+1, \dots, r $, we see that $ \langle \alpha_i^\vee, \alpha_{j_q} \rangle \le 0 $ for $ q = p+1, \dots, r $.  Hence \begin{equation} \label{lesseq1}
  	\langle \alpha_i^\vee, v\lambda \rangle = 1 + \langle \alpha_i^\vee, \alpha_{j_{p+1}} \rangle + \dots + \langle \alpha_i^\vee, \alpha_{j_r} \rangle \le 1 
  \end{equation}
  
  Since $B_w(\lambda)$ is a subcrystal of the normal crystal $B(\lambda)$, $ m \le \phi_i(b^\prime) \le  \langle \alpha_i^\vee, v\lambda \rangle$ (unless this number is negative).  Hence either $ m = 0$ or $ m = 1$. 
  
  If $ m = 0 $, then $ b = b'$, and $ wt(b) = v \lambda $, and $ v \le_L w $ as $ v \le_L s_i w \le_L w$. 
  
  On the other hand, if $m = 1 $, then $\langle \alpha_i^\vee, v\lambda \rangle = 1$, $ b = f_i b' $ and $ wt(b) = s_i v \lambda$.  Moreover in this case, we have equality in (\ref{lesseq1}) above and so $ \langle \alpha_i^\vee, \alpha_{j_q} \rangle = 0 $ and hence $ s_i $ commutes with $ s_{j_q} $, for $ q = p+1, \dots, r $.  Thus we conclude that $$ w = s_{j_1} \dots s_{j_{p-1}} s_{j_{p+1}} \dots s_{j_r} s_i s_{j_{r+1}} \dots s_{j_m} $$
and hence $ s_i v \le_L w $.

	For 2, we also proceed by induction on $ \ell $.  Choose $ i $ such that $ s_i w < w $.  Then $ B_{s_iw}(\lambda) $ is also a minuscule Demazure 
  crystal, since $ B_{s_iw}(\lambda) \subset B_w(\lambda) $.  Moreover, we also have $ s_i w \in W^J $.  Thus by induction $ s_i w $ is $ \lambda$-minuscule.
	
	To show that $ w $ is $\lambda$-minuscule, it suffices to show that $ \langle \alpha_i^\vee, s_i w \lambda \rangle = 1 $.  To see this, note that $ \langle \alpha_i^\vee, s_i w \lambda \rangle > 0 $ (since otherwise $ s_iw\lambda = w\lambda $ contradicting $ w \in W^J$) and $ \langle \alpha_i^\vee, s_i w \lambda \rangle < 2 $, because $ f_i b_{s_i w\lambda} $ must have weight $ w \lambda $, as $ B_w(\lambda) $ is minuscule.
\end{proof}
%

%
%

The following result is essentially Remark 2.4 from \cite{St01}.
\begin{proposition}\label{thm:witness}
	Let $w\in W$ be dominant minuscule.
  The set of possible dominant witnesses $\lambda$ for $w$ is 
  \[
  \left\{
    \lambda_{min}+\eta : \lambda_{min}=\displaystyle\sum_{ws_i<w} \omega_i \text{ and }\eta\in\left(\displaystyle\bigcap_{j\in I'}(\alpha_j^\vee)^\perp\right) \cap P_+
  \right\}  
  \]
  with $I'$ denoting the set of simple reflections appearing in any reduced word for $w$. 
\end{proposition}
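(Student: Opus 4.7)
The plan is to determine the pairing $\langle \alpha_i^\vee, \lambda\rangle$ for each $i\in I$ by focusing on the last occurrence of each simple reflection in a reduced word $\mathbf{w}=(s_{i_1},\ldots,s_{i_\ell})$ for $w$, and to check that the resulting constraints are both necessary and sufficient. Throughout I lean on the combinatorial criteria in \Cref{thm:lminuscule} and \Cref{dommin-condition}, together with the characterization of right descents for fully commutative elements. Both directions are ultimately a book-keeping argument with simple roots, combined with the observation that the hypothesis ``simply-laced'' forces $\langle \alpha_i^\vee,\alpha_j^\vee\rangle\in\{0,-1\}$ whenever $i\neq j$.

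For the forward direction, assume $\lambda\in P_+$ is a witness. For $i\notin I'$, only dominance $\langle \alpha_i^\vee,\lambda\rangle\geq 0$ constrains $\lambda$. For $i\in I'$, let $k$ be the largest index with $i_k=i$. Applying the $\lambda$-minuscule identity in \Cref{def:minuscule} at positions $k$ and $k+1$ yields
\[
1 = \langle \alpha_i^\vee, s_{i_{k+1}}\cdots s_{i_\ell}\lambda\rangle = \langle \alpha_i^\vee, \lambda\rangle - \sum_{m>k}\langle \alpha_i^\vee, \alpha_{i_m}\rangle.
\]
Since $i_m\neq i$ for $m>k$, each summand lies in $\{0,-1\}$, and by \Cref{dommin-condition} at most one summand is $-1$. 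If $ws_i<w$, then full commutativity permits relocating $s_{i_k}$ to the end of the reduced word, so no generator non-commuting with $s_i$ appears after position $k$ and all summands vanish, forcing $\langle \alpha_i^\vee,\lambda\rangle=1$. Otherwise exactly one summand equals $-1$ and $\langle \alpha_i^\vee,\lambda\rangle=0$. Expanding $\lambda$ in the basis of fundamental weights produces $\lambda=\lambda_{\min}+\eta$ with $\eta\in\bigl(\bigcap_{j\in I'}(\alpha_j^\vee)^\perp\bigr)\cap P_+$.

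For the converse, fix $\lambda=\lambda_{\min}+\eta$ of the claimed form and verify the $\lambda$-minuscule identity at each position $k$ by reverse induction. The inductive step reduces to proving $\langle \alpha_{i_k}^\vee, s_{i_{k+1}}\cdots s_{i_\ell}\lambda\rangle=1$. Let $p$ denote the number of indices $m>k$ with $i_m=i_k$; these contribute $-2p$ to the sum $\sum_{m>k}\langle \alpha_{i_k}^\vee,\alpha_{i_m}\rangle$. Between $k$ and the next occurrence of $s_{i_k}$, and between each subsequent consecutive pair of such occurrences, \Cref{thm:lminuscule} guarantees exactly two generators adjacent to $i_k$, contributing $+2p$ in total. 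The tail after the last occurrence of $s_{i_k}$ contributes $+\epsilon$ where $\epsilon=0$ if $ws_{i_k}<w$ and $\epsilon=1$ otherwise, by \Cref{dommin-condition} together with the right-descent analysis. Since $\langle \alpha_{i_k}^\vee,\eta\rangle=0$ and $\langle \alpha_{i_k}^\vee,\lambda_{\min}\rangle = 1-\epsilon$, the pairing assembles to
\[
\langle \alpha_{i_k}^\vee,\lambda\rangle - \sum_{m>k}\langle \alpha_{i_k}^\vee,\alpha_{i_m}\rangle = (1-\epsilon) - (-2p) - (2p+\epsilon) + 2\epsilon = 1,
\]
completing the induction.

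The main obstacle is the combinatorial bookkeeping in the converse: one must apply the ``exactly two adjacent generators between consecutive occurrences'' count of \Cref{thm:lminuscule} not only along the full reduced word but also to the partial suffix starting at the chosen position $k$, including the gap between $k$ itself and the next occurrence of $s_{i_k}$. A related subtlety is to verify in the forward direction that when $s_i\in I'$ is not a right descent, the dominant minuscule condition forces \emph{exactly} one (not zero) generator adjacent to $i$ to appear after the last $s_i$---otherwise one could commute $s_i$ to the end and turn it into a descent, contradicting the hypothesis.
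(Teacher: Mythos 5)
Your proof is correct, and while the forward direction coincides with the paper's (isolate the last occurrence of each $s_i$, use that all later letters commute with $s_i$ when $ws_i<w$, and that exactly one fails to commute otherwise by \Cref{dommin-condition}), your converse takes a genuinely different route. The paper avoids any computation in the converse: since $w$ is dominant minuscule it has \emph{some} witness $\lambda$, which by the forward direction decomposes as $\lambda_{min}+\eta$ with $\eta$ fixed by every $s_j$, $j\in I'$; subtracting $\eta$ from the witness identity shows $\lambda_{min}$ is itself a witness, and adding back any admissible $\eta'$ preserves the identity. You instead verify the minuscule identity directly by reverse induction on the position $k$, using \Cref{thm:lminuscule} to count exactly two adjacent generators in each gap between consecutive occurrences of $s_{i_k}$ (so the $+2$'s from repeated occurrences cancel against the $-1$'s in pairs) and \Cref{dommin-condition} plus the right-descent characterization to see that the tail contributes exactly $\epsilon$, which compensates $\langle\alpha_{i_k}^\vee,\lambda_{min}\rangle=1-\epsilon$. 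This is a valid and more self-contained argument — it essentially re-runs Stembridge's counting in reverse and would let you recognize witnesses without first knowing one exists — at the cost of redoing bookkeeping the paper's shortcut sidesteps. One presentational caveat: in your displayed computation the intermediate terms carry inconsistent signs (the equal-index terms contribute $+2p$ to $\sum_{m>k}\langle\alpha_{i_k}^\vee,\alpha_{i_m}\rangle$, hence $-2p$ after the minus sign, and the adjacent-generator terms contribute $-(2p+\epsilon)$, hence $+(2p+\epsilon)$; your unexplained $+2\epsilon$ is absorbing these flips), and although the total correctly equals $1$, the line as written would not survive scrutiny; the clean version is $(1-\epsilon)-\bigl(2p-(2p+\epsilon)\bigr)=1$.
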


\begin{proof}
	Let $ \lambda $ be a dominant witness for $ w $.  Fix a reduced word $\bw = (s_{j_1},\dots,s_{j_\ell})$.

	First, we will prove that $\lambda $ has the desired form.  To do so, we need to show that $\langle  \alpha_i^\vee, \lambda \rangle = 1 $ if $ ws_i < w $ and that $ \langle  \alpha_i^\vee, \lambda \rangle = 0 $ if $ s_i $ occurs in the word for $ w $, but $ w s_i \not < w $.
	
	Let $ i \in I$ such that $ s_i $ occurs in the word for $ w $.  Choose $ 1 \le k \le l $ such that $ j_k = i $ but $ j_q \ne i $ for $ q > k $.
	
	Suppose that $ ws_i < w $.  Then $s_{j_q} $ commutes with $ s_i $ for all $ q > k $ and so 
	$$ \langle \alpha^\vee_i, \lambda \rangle = \langle \alpha^\vee_{j_k}, \lambda - \alpha_{j_\ell} - \cdots - \alpha_{j_{k+1}} \rangle = 1 $$
	as desired.
	
	Suppose that $ ws_i \not < w $.  Then at least one $ s_{j_q} $ must not commute with $ s_i $ for $ q > k $. By \Cref{dommin-condition}, there is exactly one such $ q $.  Thus,
	$$
	1 = \langle \alpha^\vee_{j_k}, \lambda - \alpha_{j_\ell} - \cdots - \alpha_{j_{k+1}} \rangle = \langle \alpha^\vee_i, \lambda \rangle - \langle \alpha^\vee_i, \alpha_{j_q} \rangle
	$$
	Since we are in the simply-laced situation, $ \langle \alpha^\vee_i, \alpha_{j_q}\rangle = -1 $ and so $ \langle \alpha^\vee_i, \lambda \rangle  = 0 $ as desired.
	
	Thus, we have proven that every witness has the desired form.
	
	For the reverse direction, since $ w $ is dominant minuscule, it has some dominant witness $ \lambda $.  By above, we can write $ \lambda = \lambda_{min}+\eta $ for $ \eta \in \displaystyle\bigcap_{i\in I'}(\alpha_i^\vee)^\perp\cap P_+$.  Then $s_{j_k} \cdots s_{j_\ell} \lambda = s_{j_k} \cdots s_{j_\ell} \lambda_{min} + \eta $.  Since $ \lambda $ is a witness, we have
	$$
	s_{j_k} \cdots s_{j_\ell} \lambda = \lambda - \alpha_{j_\ell} - \cdots - \alpha_{j_k} 
    $$
    whence
    $$
        s_{j_k} \cdots s_{j_\ell} \lambda_{min} = \lambda_{min} - \alpha_{j_\ell} - \cdots - \alpha_{j_k} 
    $$ 
    and $ \lambda_{min} $ is also a witness. It follows that $\lambda_{min}+\eta $ is a witness for any $ \eta \in \displaystyle\bigcap_{i\in I'}(\alpha_i^\vee)^\perp\cap P_+$.
\end{proof}

\begin{corollary}\label{thm:rightmaximal}
	Let $w$ be dominant minuscule and assume that a reduced word for $w$ contains each simple reflection at least once. Then $ w $ has a unique witness $\lambda$.  
\end{corollary}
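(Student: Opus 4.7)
The plan is to derive the corollary directly from \Cref{thm:witness}. That proposition describes the set of dominant witnesses for $w$ as $\lambda_{min} + \eta$ where $\lambda_{min} = \sum_{ws_i < w} \omega_i$ is fixed and $\eta$ ranges over $\bigl(\bigcap_{j \in I'} (\alpha_j^\vee)^\perp\bigr) \cap P_+$, with $I'$ the set of simple reflections appearing in a reduced word for $w$. Since $\lambda_{min}$ depends only on $w$, the uniqueness of the witness is equivalent to the vanishing of this space of $\eta$'s.

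The main observation is that the hypothesis forces $I' = I$. Therefore we must show that
\[
\bigcap_{j \in I}(\alpha_j^\vee)^\perp \cap P_+ = \{0\}.
\]
This is immediate from the assumption (made at the start of the paper) that $\fg$ is semisimple: the simple coroots $\{\alpha_j^\vee : j \in I\}$ span the Cartan subalgebra $\fh$, so an element $\eta \in \fh^*$ that pairs to zero with all of them must itself be zero. Combining with \Cref{thm:witness} then yields $\lambda = \lambda_{min}$ as the unique dominant witness.

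There is no real obstacle to this argument — the content is entirely in \Cref{thm:witness}, and the corollary is just the specialization of that description to the case $I' = I$. The only point worth flagging explicitly in the write-up is that semisimplicity (rather than merely reductivity) is being used, since for a reductive $\fg$ the intersection $\bigcap_{j \in I}(\alpha_j^\vee)^\perp$ would be the dual of the center of $\fh$ and could contribute extra witnesses.
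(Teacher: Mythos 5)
Your argument is correct and is precisely the intended one: the paper gives no separate proof of this corollary, treating it as the immediate specialization of \Cref{thm:witness} to the case $I'=I$, where semisimplicity forces $\bigcap_{j\in I}(\alpha_j^\vee)^\perp=\{0\}$ and hence $\lambda=\lambda_{min}$. Your remark about needing semisimplicity rather than mere reductivity is a fair point but not a departure from the paper's reasoning.
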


\begin{example}
  The unique witness which appears in \Cref{thm:rightmaximal} does not need to be minuscule.  For example, consider type $ D_4$ and $ w = s_1s_3s_4 s_2 $.  Then $w $ is dominant minuscule with unique witness $\lambda = \omega_2$, which is not minusucle.  On the crystal level this means that not every minuscule Demazure crystal can be realized as the Demazure subcrystal of a minuscule $\fg$-crystal.
\end{example}

The following lemma will be useful for us later.

\begin{lemma}\label{thm:demazuretensor}
  Let $\lambda, \mu\in P_+$ and $w\in W$ (note that here we do not require that $w$ is dominant minuscule). Then $B_w(\lambda+\mu)$ can be identified with a subcrystal of $B_w(\lambda)\otimes B_w(\mu)$.
\end{lemma}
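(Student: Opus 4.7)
The plan is to define the identification by restricting the standard crystal morphism $\iota: B(\lambda+\mu) \hookrightarrow B(\lambda) \otimes B(\mu)$, which sends $b_{\lambda+\mu} \mapsto b_\lambda \otimes b_\mu$ and exists because $V(\lambda+\mu)$ is the Cartan component of $V(\lambda) \otimes V(\mu)$. Since $\iota$ commutes with the raising operators and $B_w(\lambda+\mu) \subset B(\lambda+\mu)$ is stable under them by definition, its image is automatically an upper semi-normal subcrystal of $B(\lambda) \otimes B(\mu)$. The entire content of the lemma is thus to show that this image lies inside $B_w(\lambda) \otimes B_w(\mu)$.

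I would prove this containment by induction on $\ell(w)$. The base case $w=e$ is immediate, since $B_e(\nu) = \{b_\nu\}$ for every dominant $\nu$. For the inductive step, choose $i \in I$ with $s_iw < w$ and set $w' = s_iw$. Using a reduced word for $w$ beginning with $s_i$, \Cref{def:demazureops} yields, for any dominant $\nu$,
$$ B_w(\nu) = \bigcup_{m \geq 0} f_i^m\, B_{w'}(\nu), $$
which in particular shows that $B_w(\nu)$ is closed under the action of $f_i$. Given $b \in B_w(\lambda+\mu)$, write $b = f_i^m b'$ with $b' \in B_{w'}(\lambda+\mu)$. By the inductive hypothesis, $\iota(b') = c_1 \otimes c_2$ for some $c_1 \in B_{w'}(\lambda)$ and $c_2 \in B_{w'}(\mu)$. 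Applying the tensor product rule (\Cref{thm:signrule}) to compute $f_i^m(c_1 \otimes c_2)$ then produces $m_1, m_2 \geq 0$ with $m_1 + m_2 = m$ and
$$ \iota(b) = f_i^{m_1} c_1 \otimes f_i^{m_2} c_2, $$
and the $f_i$-closure of $B_w(\lambda)$ and $B_w(\mu)$ places this element in $B_w(\lambda) \otimes B_w(\mu)$, completing the induction.

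The step that needs the most care is verifying that $\iota$ restricts not just to a set-theoretic inclusion but to an honest morphism of upper semi-normal subcrystals. Concretely, on $B_w(\lambda+\mu)$ one sets $f_i b = 0$ whenever the result leaves $B_w(\lambda+\mu)$, and on $B_w(\lambda) \otimes B_w(\mu)$ one similarly truncates; I need these conventions to align under $\iota$. This follows from the facts that $\iota$ already intertwines $f_i$ on the ambient crystals $B(\lambda+\mu)$ and $B(\lambda) \otimes B(\mu)$, and that $B_w(\lambda) \otimes B_w(\mu)$ is itself closed under the $e_i$'s inside $B(\lambda) \otimes B(\mu)$ (immediate from the tensor product rule together with the upper semi-normality of each factor). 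Combined with the containment established by induction, this yields the desired identification.
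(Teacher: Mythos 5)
Your proposal is correct and follows essentially the same route as the paper: both use the highest-weight embedding $\iota: B(\lambda+\mu)\hookrightarrow B(\lambda)\otimes B(\mu)$ together with the fact that a power $f_i^{p}$ applied to a tensor product splits as $f_i^{q}\otimes f_i^{p-q}$, the paper unwinding a full reduced word in one pass where you package the same peeling step as an induction on $\ell(w)$.
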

\begin{proof}
  We have the map
  \[
    \iota: B(\lambda+\mu)\hookrightarrow B(\lambda)\otimes B(\mu)
  \]
  by considering highest weights. By definition of Demazure crystals, $B_w(\lambda+\mu)\subseteq B(\lambda+\mu)$. The crystal $B_w(\lambda)\otimes B_w(\mu)$ is a subcrystal of $B(\lambda)\otimes B(\mu)$. We want to show that $\iota (B_w(\lambda+\mu))\subseteq B_w(\lambda)\otimes B_w(\mu)$.

  Let $\mathbf{w}=(s_{i_1},s_{i_2},\ldots , s_{i_\ell})$ be a reduced word for $w$. Let $b\in B_w(\lambda+\mu)$. Then by definition of Demazure crystals, there exist $p_k\in\bN$ such that $b=f_{i_1}^{p_1}f_{i_2}^{p_2}\cdots f_{i_\ell}^{p_\ell} b_{\lambda+\mu}$.  Then,
  \begin{align*}
    \iota(b) &= f_{i_1}^{p_1}f_{i_2}^{p_2}\cdots f_{i_\ell}^{p_\ell} \iota(b_{\lambda+\mu}) && \text{since $\iota$ is a crystal morphism}\\
             &= f_{i_1}^{p_1}f_{i_2}^{p_2}\cdots f_{i_\ell}^{p_\ell} (b_\lambda \otimes b_\mu) & \\
             &= f_{i_1}^{p_1}f_{i_2}^{p_2}\cdots f_{i_{\ell-1}}^{p_{\ell-1}} (f_{i_\ell}^{q_\ell} b_\lambda \otimes f_{i_\ell}^{p_\ell-q_\ell} b_\mu) & & \text{for some }0\leq q_\ell \leq p_\ell\\
             &\,\,\,\vdots \\
             &= (f_{i_1}^{q_1}f_{i_2}^{q_2}\cdots f_{i_\ell}^{q_\ell} b_\lambda) \otimes (f_{i_1}^{p_1-q_1}f_{i_2}^{p_2-q_2}\cdots f_{i_\ell}^{p_\ell-q_\ell} b_\mu  )
  \end{align*}
  which shows that $\iota(b)\in B_w(\lambda)\otimes B_w(\mu)$.
\end{proof}

\begin{remark}
	It is tempting to guess that $ B_w(\lambda + \mu) = B(\lambda + \mu) \cap \left( B_w(\lambda) \otimes B_w(\mu) \right)$ where we embed both $ B(\lambda + \mu) $ and $ B_w(\lambda) \otimes B_w(\mu) $ as subsets of $ B(\lambda) $.  In general, this is not true, as can be seen for $ \fg = \mathfrak{sl}_3$,  $ \lambda = \omega_1$, $\mu = \omega_2$ and $ w = s_1 s_2 $; in this case $ B_w(\lambda + \mu)$ contains 5 elements, while  $B(\lambda + \mu) \cap \left( B_w(\lambda) \otimes B_w(\mu) \right)$ contains 6 elements.
	
	Recently the first author and collaborators \cite{AnneNew} have shown that the above guess is almost always true.  They proved that if $ \lambda, \mu $ satisfy the condition that whenever $ \alpha_i^\vee(\mu) = 0 $, then $ \alpha_i^\vee(\lambda) = 0 $, then for any $ w $, $$ B_w(\lambda + \mu) = B(\lambda + \mu) \cap \left( B_w(\lambda) \otimes B_w(\mu) \right) $$
	In particular for every positive integer $n $, we have $ B_w(n\lambda) = B(n\lambda) \cap B_w(\lambda)^{\otimes n} $. 
\end{remark}

\subsection{Reverse plane partitions}
\label{ss:rpps}
Let $ \lambda \in P_+$ and $ w\in W $ be $\lambda$-minuscule.
In this section, we will describe a model for the Demazure crystal $B_w(n\lambda)$. 

\begin{definition}
  A \textbf{reverse plane partition of shape $H(w)$} with height $n \in \bN$, is an element of the set
  \[
    \Rpp(w,n)=\left\{ \Phi: H(w) \to \{0,1,\ldots n\} \;|\; \Phi \text{ is order-reversing} \right\}
  \]
  where $\{0,1,\ldots ,n\}$ is totally ordered.
\end{definition}
\begin{remark}\label{eg:indicator}
  By identifying subsets of $H(w)$ and their indicator functions we see that 
  \[
    \Rpp(w,1) = J(H(w)).
  \]
\end{remark}
Define a map $ \Rpp(w,n) \rightarrow J(H(w))^n $ by
\begin{equation} \label{eq:gravsort}
\Phi \mapsto (\phi_1, \dots, \phi_n) \qquad \phi_k = \Phi^{-1}(\{n-k+1, \dots, n\}).
\end{equation}

The following simple observation will be very useful for us. 
\begin{lemma}\label{lem:gravsort}
	This map is injective with image $ \{ (\phi_1, \dots, \phi_n) : \phi_k\subseteq \phi_{k+1} \text{ for $ k \ge 1 $}  \}$.
\end{lemma}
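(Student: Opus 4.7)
The proof is essentially an exercise in unraveling the definitions. The plan is to exhibit an explicit inverse to the given map when restricted to the stated image, and to check compatibility in both directions.

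First I would verify that the map actually lands in the claimed codomain. For each $k$, the set $\phi_k = \Phi^{-1}(\{n-k+1,\dots,n\})$ is an order ideal: if $y\in\phi_k$ and $x\le y$ in $H(w)$, then $\Phi(x)\ge\Phi(y)\ge n-k+1$ since $\Phi$ is order-reversing, so $x\in\phi_k$. The containment $\phi_k\subseteq\phi_{k+1}$ is immediate from $\{n-k+1,\dots,n\}\subseteq\{n-k,\dots,n\}$.

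Next I would construct the inverse. Given a tuple $(\phi_1,\dots,\phi_n)$ of order ideals with $\phi_1\subseteq\phi_2\subseteq\cdots\subseteq\phi_n$, define
\[
\Psi(x) := \#\{k : x\in\phi_k\}.
\]
Because of the nesting, $\{k : x\in\phi_k\}$ is either empty or an interval of the form $\{k_0,k_0+1,\dots,n\}$, so $\Psi(x)=n-k_0+1$ in the nonempty case, and $x\in\phi_k$ if and only if $\Psi(x)\ge n-k+1$. This shows $\Psi^{-1}(\{n-k+1,\dots,n\})=\phi_k$, so the two maps are mutually inverse on the stated image. Order-reversal of $\Psi$ follows because $x\le y$ forces $\{k : y\in\phi_k\}\subseteq\{k : x\in\phi_k\}$ (each $\phi_k$ being a down-set), whence $\Psi(y)\le\Psi(x)$.

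There is no real obstacle here; the only thing to be careful about is the off-by-one bookkeeping in the threshold $n-k+1$ and the convention that $\phi_k$ is defined by the largest $k$ values of $\Phi$ (so that the $\phi_k$'s are \emph{increasing} in $k$), which matches the chosen embedding into $J(H(w))^n$ used later for the signature rule.
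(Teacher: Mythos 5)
Your proof is correct and is the natural argument; the paper in fact states this lemma as a ``simple observation'' without proof, and your write-up (checking each $\phi_k$ is an order ideal, that the chain is nested, and exhibiting the explicit inverse $\Psi(x)=\#\{k : x\in\phi_k\}$ with the threshold identity $x\in\phi_k \iff \Psi(x)\ge n-k+1$) supplies exactly the missing details. Nothing further is needed.
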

A sequence $\phi_1, \dots, \phi_n $ satisfying $ \phi_k\subseteq \phi_{k+1} $ for $ k \ge 1 $, will be called an \new{increasing chain} of order ideals. Hence we can identify an RPP with its corresponding increasing chain of order ideals.

A repeated application of \Cref{thm:demazuretensor} shows that $B_w(n\lambda)$ is a subcrystal of $(B_w(\lambda))^{\otimes n}$ so we may identify $B_w(n\lambda)$ with its image in $(B_w(\lambda))^{\otimes n}$. Increasing chains of order ideals also appear when we consider $B_w(n\lambda)$.  Here and below, we use \Cref{prop:dominant-minuscule-demazure} to identify $ B_w(\lambda) $ and $ J(H(w))$. The main result of this section is the following result.

\begin{theorem} \label{thm:gravsort}
The map $B_w(n\lambda)\hookrightarrow \left(B_w(\lambda)\right)^{\otimes n} $ has the same image as in \Cref{lem:gravsort} and thus we have a bijection $ \Rpp(w,n) \cong B_w(n\lambda) $.
\end{theorem}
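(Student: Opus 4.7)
The plan is to identify both sides as subsets of $J(H(w))^n$: iterating \Cref{thm:demazuretensor} and using \Cref{prop:dominant-minuscule-demazure} to identify $B_w(\lambda)\cong J(H(w))$ embeds $B_w(n\lambda)\hookrightarrow J(H(w))^n$, while \Cref{lem:gravsort} identifies $\Rpp(w,n)$ with the set $\mathcal{I}$ of increasing chains $\phi_1\subseteq\cdots\subseteq\phi_n$. The theorem then reduces to showing that the image of $B_w(n\lambda)$ equals $\mathcal{I}$, which I would prove by two opposite inclusions using the signature rule \Cref{thm:signrule}.

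For image $\subseteq\mathcal{I}$: the highest weight $b_{n\lambda}$ maps to $(\emptyset,\ldots,\emptyset)\in\mathcal{I}$, and since $B_w(n\lambda)$ is generated from $b_{n\lambda}$ by the iterated $f_i$-applications prescribed in \Cref{def:demazureops}, it suffices to show $\mathcal{I}$ is closed under $f_i$ in the ambient $B_w(\lambda)^{\otimes n}$. Two single-factor facts drive the argument: for $\phi\in J(H(w))$, the values $\varphi_i(\phi),\varepsilon_i(\phi)$ lie in $\{0,1\}$, and $\varphi_i(\phi)\varepsilon_i(\phi)=0$ because a maximal bead on runner $i$ in $\phi$ precludes an open valid slot above it. Suppose the signature rule picks factor $\phi_k$ to add the next valid slot $x$ on runner $i$; if $x\notin\phi_{k+1}$, the chain condition $\phi_k\subseteq\phi_{k+1}$ places the prerequisites of $x$ in $\phi_{k+1}$, giving $\varphi_i(\phi_{k+1})=1$ and $\varepsilon_i(\phi_{k+1})=0$. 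Then $\phi_{k+1}$'s $+$ sits directly to the right of $\phi_k$'s $+$ with no $-$ in between; in the left-to-right matching of $-+$ pairs, $\phi_k$'s $+$ being unmatched forces $\phi_{k+1}$'s $+$ to also be unmatched, contradicting the signature rule's choice of $\phi_k$. Hence $x\in\phi_{k+1}$ and the new chain remains increasing.

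For $\mathcal{I}\subseteq$ image: I would induct on $\ell(w)$ via the Demazure recurrence. Fix a reduced word $(s_{i_1},\ldots,s_{i_\ell})$ for $w$ and set $w'=s_{i_2}\cdots s_{i_\ell}$; by \Cref{def:demazureops}, $B_w(n\lambda)=\bigcup_{m\ge 0}f_{i_1}^m B_{w'}(n\lambda)$, and by induction $B_{w'}(n\lambda)=\mathcal{I}_{w'}$. Observe that $H(w)=H(w')\sqcup\{x_0\}$ where $x_0$ is the unique top bead of $H(w)$ on runner $i_1$ (maximal in all of $H(w)$), so any $\boldsymbol{\psi}\in\mathcal{I}_w$ contains $x_0$ in exactly its rightmost $m$ factors for some $0\le m \le n$. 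Setting $\boldsymbol{\phi}=(\phi_1,\ldots,\phi_{n-m},\phi_{n-m+1}\setminus\{x_0\},\ldots,\phi_n\setminus\{x_0\})\in\mathcal{I}_{w'}$, a signature-rule computation verifies $f_{i_1}^m\boldsymbol{\phi}=\boldsymbol{\psi}$, so $\boldsymbol{\psi}\in B_w(n\lambda)$.

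The main obstacle is the signature-rule bookkeeping: the forward closure lemma requires combining the chain condition with $\varphi_i\varepsilon_i=0$ to rule out bad signature-rule choices, and the reverse induction requires verifying that $f_{i_1}^m$ distributes $x_0$ into exactly the rightmost $m$ factors of $\boldsymbol{\phi}$. The latter is delicate because some factors $\phi_k$ with $k\le n-m$ may contribute $+$'s on runner $i_1$ from valid slots other than $x_0$; one must check that the signature rule consistently favors factors in the rightmost $m$ positions so that all $m$ beads added are indeed $x_0$.
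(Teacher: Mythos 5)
Your first inclusion (image $\subseteq\mathcal{I}$) is essentially the paper's argument: one shows that the set of increasing chains is stable under the lowering operators by the signature rule and then descends from the highest weight element; your use of $\varphi_i(\phi)\varepsilon_i(\phi)=0$ on single factors is a fine way to organize that step. The problem is in the reverse inclusion.

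The identity $f_{i_1}^m\boldsymbol{\phi}=\boldsymbol{\psi}$, where $\boldsymbol{\phi}$ is obtained from $\boldsymbol{\psi}$ by deleting $x_0$ from its rightmost $m$ factors, is false in general. Take type $A_3$, $w=s_2s_1s_3s_2$, $\lambda=\omega_2$, $n=2$, and label the beads of $H(w)$ as in \Cref{example:heap}: $y$ is the bottom bead on runner $2$, $u,v$ are the beads on runners $1,3$, and $x_0$ is the top bead on runner $2$, so $i_1=2$ and $w'=s_1s_3s_2$. The chain $\boldsymbol{\psi}=\{y\}\otimes\{y,u,v,x_0\}$ is increasing with $m=1$, so your $\boldsymbol{\phi}=\{y\}\otimes\{y,u,v\}$. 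Its $2$-sign pattern is $-\,+$: the first factor has $\varphi_2=0$ (adding $x_0$ requires $u,v$) and $\varepsilon_2=1$ ($y$ is removable), while the second factor has $\varphi_2=1$, $\varepsilon_2=0$. The pair cancels, so $f_2(\boldsymbol{\phi})=0\neq\boldsymbol{\psi}$. The failure mode is not the one you flagged (extra $+$'s from slots other than $x_0$ in the left factors) but uncancelled $-$'s contributed by left factors whose top bead on runner $i_1$ is removable and lies below $x_0$; these cancel the $+$'s of the rightmost factors and divert $f_{i_1}$ leftward or kill the element outright. Note that $\boldsymbol{\psi}$ really does lie in $B_w(2\lambda)$, but as $f_2^2\bigl(\emptyset\otimes\{y,u,v\}\bigr)$: the correct preimage in $B_{w'}(n\lambda)$ is obtained by stripping lower beads on runner $i_1$ out of earlier factors as well, not just the copies of $x_0$.

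The repair is exactly the paper's route and it preserves your induction on $\ell(w)$: do not guess the preimage, but set $\Psi:=e_{i_1}^{\varepsilon_{i_1}(\boldsymbol{\psi})}(\boldsymbol{\psi})$, so that $\boldsymbol{\psi}=f_{i_1}^{\varepsilon_{i_1}(\boldsymbol{\psi})}(\Psi)$ holds automatically in an upper semi-normal crystal, and then prove that no factor of $\Psi$ contains $x_0$ (the paper's \Cref{thm:zeroentry}): if some factor did, then every factor to its right would also contain $x_0$, hence contribute only a $-$ on runner $i_1$, so that factor's $-$ could never be cancelled, contradicting $\varepsilon_{i_1}(\Psi)=0$. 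Then $\Psi\in\mathcal{I}_{w'}=B_{w'}(n\lambda)$ by induction. Carrying this out also requires the $e_i$-half of the stability statement — you must know $\Psi$ is still an increasing chain before reading it as an element of $\mathcal{I}_{w'}$ — which your proposal only establishes for $f_i$.
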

This theorem endows $ \Rpp(w,n) $ with a crystal structure.  In this crystal structure, the weight map is given by
\begin{equation}
wt(\Phi) = \lambda - \sum_{x \in H(w)} \Phi(x) \alpha_{\pi(x)} \label{eq:weight}
\end{equation}

\begin{remark} \label{re:LS}
	Given an increasing chain $\phi_1, \dots, \phi_n $ of order ideals, we can use \Cref{thm:JH(w)weakorder} to produce an increasing chain $ v_1, \dots, v_n $ in the Bruhat order on $ \{ v \in W : v \le_L w \} $.  This chain of Weyl group elements defines a Lakshmibai-Sesahdri path in the sense of \cite[Section 2]{L94}.  Using this bijection, it is possible to deduce \Cref{thm:gravsort} from the main results of \cite{L94}.  Alternatively, when $ \lambda $ is minuscule, \Cref{thm:gravsort} is equivalent to \cite[Prop 7.29]{Scrim}.  However, we prefer to give a self-contained proof in the language of reverse plane partitions.  
\end{remark}

The crystal structure on $\left(B_w(\lambda)\right)^{\otimes n}$ can be computed by the signature rule (\Cref{thm:signrule}). Note that in our situation every tensor factor is a minuscule Demazure crystal so every factor will contribute at most one sign to the sequence.

\begin{example}
\label{example:heap}
Let $\fg=\mathfrak{sl_4}$ and $\mathbf{w}=(s_2,s_3,s_1,s_2)$. The glass bead game of this heap is depicted in \Cref{fig:smallerheap}. 

Since $w = w_0^{\{1,3\}}$, we know that $J(H(w))$ is a model for the crystal $B(\omega_2)$.
Consider the element 
\[
b = \begin{psmallmatrix}
	& 0 & \\
	0 & & 0 \\
	& 1 &
\end{psmallmatrix}\otimes
\begin{psmallmatrix}
	& 0 & \\
	0 & & 1 \\
	& 1 &
\end{psmallmatrix}\otimes
\begin{psmallmatrix}
	& 0 & \\
	1 & & 1 \\
	& 1 &
\end{psmallmatrix}\otimes
\begin{psmallmatrix}
	& 1 & \\
	1 & & 1 \\
	& 1 &
\end{psmallmatrix} 
\]
in $B(\omega_2)^{\otimes 4}$. To apply the operators $f_2, e_2$ we compute the sign pattern  $ - \phantom{+} + -$.  

After cancelling all $ -+ $ pairs, we are left with just a single $ - $ in the fourth position.  So we see that $ f_2(b) = 0 $, while $ e_2 $ acts on the fourth tensor factor to yield
\[
e_2(b) = \begin{psmallmatrix}
	& 0 & \\
	0 & & 0 \\
	& 1 &
\end{psmallmatrix}\otimes
\begin{psmallmatrix}
	& 0 & \\
	0 & & 1 \\
	& 1 &
\end{psmallmatrix}\otimes
\begin{psmallmatrix}
	& 0 & \\
	1 & & 1 \\
	& 1 &
\end{psmallmatrix}\otimes
\begin{psmallmatrix}
	& 0 & \\
	1 & & 1 \\
	& 1 &
\end{psmallmatrix} \,.
\]
\begin{figure}[ht]
	\centering
    \begin{tikzpicture}
      \foreach \x in {0,1,2} 
        \draw [line width=1mm] (\x, 0) -- (\x,4);
      \foreach \x in {0,1} 
        \shade[shading=ball, ball color=white] (\x+1,\x+1) circle (0.7);
      \foreach \x in {0,1} 
        \shade[shading=ball, ball color=white] (\x,\x+2) circle (0.7);
      \draw [line width=0.5mm] (0,0) -- (2,0);
      \foreach \x/\xtext in {0/1,1/2,2/3} 
        \draw (\x cm,0 pt) node[fill=white,inner sep=2pt,draw] {${\xtext}$};
    \end{tikzpicture}
	\caption{The heap $H((s_2,s_3,s_1,s_2))$ in type $A_3$}
	\label{fig:smallerheap}
\end{figure}
\end{example}
\begin{lemma} \label{lem:preserveincreasing}
    Suppose that $ b = \phi_1 \otimes \cdots \otimes \phi_n \in B_w(\lambda)^{\otimes n} $ is an increasing chain of order ideals.  Then for any $ i$, $ e_i(b) $ and $ f_i(b) $ are also increasing (if they are non-zero).
\end{lemma}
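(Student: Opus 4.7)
The plan is to combine the signature rule (\Cref{thm:signrule}) with the bead-model description of the crystal operators on $B_w(\lambda)$ from \Cref{prop:dominant-minuscule-demazure}. First I would fix $i \in I$, enumerate $H(w)_i = \{z_1 < z_2 < \cdots < z_{N_i}\}$ (totally ordered by \Cref{rmk:runners}), and for each $\phi_k$ set $m_k = |\phi_k \cap H(w)_i|$, so that $\phi_k \cap H(w)_i = \{z_1, \ldots, z_{m_k}\}$ and $m_1 \le m_2 \le \cdots \le m_n$. From \Cref{prop:dominant-minuscule-demazure} one reads off that $\varphi_i(\phi_k) = 1$ iff $m_k < N_i$ and $\phi_k \cup \{z_{m_k+1}\}$ is still an order ideal of $H(w)$, while $\varepsilon_i(\phi_k) = 1$ iff $m_k > 0$ and $z_{m_k}$ is maximal in $\phi_k$; moreover $\varphi_i(\phi_k)\cdot\varepsilon_i(\phi_k) = 0$, so each tensor factor contributes at most one sign. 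When they act, $f_i$ adjoins $z_{m_k+1}$ to $\phi_k$ and $e_i$ removes $z_{m_k}$.

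Next I would treat $f_i$. Suppose it acts on $\phi_k$. The inclusion $\phi_{k-1} \subseteq \phi_k \cup \{z_{m_k+1}\}$ is automatic, so the only thing to check is $z_{m_k+1} \in \phi_{k+1}$, equivalently $m_k < m_{k+1}$ (the case $k=n$ being vacuous). The crux is to derive a contradiction from $m_k = m_{k+1}$: in that case $\phi_k$ and $\phi_{k+1}$ share the same $i$-beads, and since every $H(w)$-predecessor of $z_{m_k+1}$ already lies in $\phi_k \subseteq \phi_{k+1}$, one concludes $\varphi_i(\phi_{k+1}) = 1$, so $\phi_k$ and $\phi_{k+1}$ contribute adjacent $+$ signs in the sign sequence. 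The key combinatorial observation is an adjacency lemma for $-+$ cancellation: running the usual bracket/stack process, two consecutive $+$ signs are either both cancelled or both uncancelled; hence the $+$ at position $k+1$ survives whenever the $+$ at $k$ does, forcing the rightmost uncancelled $+$ to sit at position $\ge k+1$, which contradicts that $f_i$ acts on $\phi_k$.

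The $e_i$ case is a mirror image: if $e_i$ acts on $\phi_k$ and $m_{k-1} = m_k$, then $z_{m_k}$ is maximal in $\phi_{k-1}$ (inheriting maximality from $\phi_k \supseteq \phi_{k-1}$), giving $\varepsilon_i(\phi_{k-1}) = 1$; by the symmetric adjacency lemma, two consecutive $-$ signs are either both cancelled or both uncancelled, so the leftmost uncancelled $-$ lies at position $\le k-1$, contradicting that $e_i$ acts on $\phi_k$. I expect the main obstacle to be the translation step into the bead model together with the little adjacent-signs cancellation lemma; once those are in place, the increasing property falls out immediately from $\phi_{k-1} \subseteq \phi_k \subseteq \phi_{k+1}$.
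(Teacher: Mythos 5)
Your proof is correct and takes essentially the same route as the paper's: both use the signature rule to show that if the increasing condition failed after applying $e_i$ (resp.\ $f_i$) to the $k$th factor, then the $(k-1)$st (resp.\ $(k+1)$st) factor would contribute an adjacent $-$ (resp.\ $+$), which forces the operator to act elsewhere. One small correction: your adjacency lemma as stated (``two consecutive $+$ signs are either both cancelled or both uncancelled'') is false --- in the pattern $-++$ the first $+$ is cancelled and the second survives --- but the one-directional version you actually use (a $-$ cancelling the right-hand $+$ would have to lie left of, and hence first consume, the left-hand $+$) is true and is exactly the implicit step in the paper's argument as well.
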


\begin{proof}

Let us write $ e_i(b) = \phi_1' \otimes \cdots \otimes \phi'_n$.  By the definition of the crystal structure on tensor products, there exists $ j $ such that $ \phi'_j = e_i(\phi_j) $ and $\phi'_k = \phi_k$ for all $k \ne j $.

Since $ \phi'_j $ is obtained from $ \phi_j $ by removing a bead on runner $ i$ and all the rest are unchanged, the condition $ \phi'_k \subseteq \phi'_{k+1}$ definitely holds at all $ k \ne j-1 $.  So it suffices to check that $ \phi'_{j-1} \subseteq \phi'_j $.  The only way that this could be violated is if the bead $x $ on runner $ i $ removed by applying $ e_i $ to $ \phi_j $ is an element of $ \phi_{j-1} $.  

If $ x \in \phi_{j-1} $, then it can be removed, since all the beads in $\phi_{j-1} $ lie below $ x $ in $ H(w) $ (as this holds in $ \phi_j $).  However, if a bead can be removed from runner $i$ in $\phi_{j-1}$ then $ \phi_{j-1} $  contributes a $- $ to the sign pattern before $\phi_{j-1}$, and the signature rule says $ e_i $ acts on the leftmost $-$ so not on $ \phi_j $.  
Therefore, the bead $ x $ cannot be in $ \phi_{j-1} $, so we conclude that $  \phi'_{j-1} \subseteq \phi'_j $.

The argument for $ f_i $ is similar. Let us write $ f_i(b) = \phi_1' \otimes \cdots \otimes \phi'_n$. Again, there exists $ j $ such that $ \phi'_j = f_i(\phi_j) $ and $\phi'_k = \phi_k$ for all $k \ne j $.

To check the increasing condition, we need to show that  $ \phi'_j \subseteq \phi'_{j+1}$.  Suppose that $ x $ is the bead that is added to $ \phi_j $ to form $ \phi'_j $. 

By the signature rule, no beads can be added to $ \phi_{j+1} $ on the $ i$th runner.  Since $ x $ can be added for $ \phi_j $, and $ \phi_j \subseteq \phi_{j+1}$, this means that we must have $ x \in \phi_{j+1} $.  Hence we conclude that $  \phi'_j \subseteq \phi'_{j+1} $.
\end{proof}

\begin{proposition}\label{thm:gravitysort}
  Let $\mathbf{\Phi}=\phi_1\otimes \cdots \otimes \phi_n$ be in the image of the embedding $B_w(n\lambda)\hookrightarrow \left(B_w(\lambda)\right)^{\otimes n}$. Then $ \phi_1, \dots, \phi_n $ is an increasing chain of order ideals.
\end{proposition}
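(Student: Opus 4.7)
The plan is to reduce to the highest weight element and then run an induction using \Cref{lem:preserveincreasing}.

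First, identify the image of the highest weight. By construction (as in the proof of \Cref{thm:demazuretensor}), the embedding $B(n\lambda)\hookrightarrow B(\lambda)^{\otimes n}$ sends the highest weight element $b_{n\lambda}$ to $b_\lambda\otimes\cdots\otimes b_\lambda$. Under the identification of $B_w(\lambda)$ with $J(H(w))$ provided by \Cref{prop:dominant-minuscule-demazure} (where the weight of an order ideal $\phi$ is $v\lambda$ for the Weyl group element $v$ with $H(v)=\phi$), the highest weight element $b_\lambda$ corresponds to the empty order ideal $\emptyset\in J(H(w))$. Hence the image of $b_{n\lambda}$ in $B_w(\lambda)^{\otimes n}$ is $\emptyset\otimes\cdots\otimes\emptyset$, which is trivially an increasing chain of order ideals.

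Next, run the induction. By \Cref{def:demazureops}, every element of $B_w(n\lambda)$ is of the form $f_{i_1}^{m_1}f_{i_2}^{m_2}\cdots f_{i_\ell}^{m_\ell}\, b_{n\lambda}$ where $(s_{i_1},\ldots,s_{i_\ell})$ is a reduced word for $w$. Since the embedding is a morphism of crystals, applying these lowering operators to $b_{n\lambda}$ and then mapping to $B_w(\lambda)^{\otimes n}$ is the same as mapping $b_{n\lambda}$ to $\emptyset\otimes\cdots\otimes\emptyset$ and then applying the same sequence of $f_i$'s in the tensor product crystal. By \Cref{lem:preserveincreasing}, if $f_i$ is applied to an increasing chain and the result is nonzero, then the result is again an increasing chain. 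Applying this inductively along the word yields the claim.

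The main (and only real) content of this proposition is already encapsulated in \Cref{lem:preserveincreasing}; the remaining argument is the straightforward reduction to the highest weight element together with the identification of $b_\lambda$ with the empty order ideal. Thus there is no serious obstacle beyond what has already been done.
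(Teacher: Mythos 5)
Your proof is correct and follows essentially the same route as the paper: identify the image of $b_{n\lambda}$ as the (trivially increasing) chain $\emptyset\otimes\cdots\otimes\emptyset$ and then induct on lowering operators using \Cref{lem:preserveincreasing}. The paper's own proof is just a more terse version of exactly this argument.
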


\begin{proof}
  By definition, every element of $ B_w(n\lambda) $ can be obtained by applying lowering operators to the highest weight element $b_{n\lambda}$ which corresponds to $\phi_k=\emptyset$ (thought of as an order ideal of $H(w)$) for all $k$.  Thus, the result follows from \Cref{lem:preserveincreasing}, by induction using the lowering operators.
\end{proof}

Before proving the converse of \Cref{thm:gravitysort} we need a Lemma.

\begin{lemma}\label{thm:zeroentry}
  Let $\Phi\in RPP(w,n)$ and let $s_i$ be such that $s_i w<w$. Let $x$ be the element of $H(w)$ corresponding to this (first) occurrence of $s_i$ and let
  \[
    \Psi=e_i^{\varepsilon_i(\Phi)}(\Phi).
  \]
  Then the entry $\Psi(x)$ is zero.
\end{lemma}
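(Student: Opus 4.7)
The plan is to pass to the tensor factorization $\Phi \leftrightarrow (\phi_1,\dots,\phi_n)$ given by \Cref{lem:gravsort} and apply the signature rule. Since $\phi_n = \Phi^{-1}(\{1,\dots,n\})$, the statement $\Psi(x) = 0$ is equivalent to $x \notin \psi_n$, where $\Psi$ corresponds to the increasing chain $(\psi_1,\dots,\psi_n)$. I will argue by contradiction, assuming $x \in \psi_n$.

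The key preliminary observation is that $x$ is maximal not only in the runner $H(w)_i$ but in the entire heap $H(w)$. Indeed, $s_iw < w$ means there is a reduced word $(s_{i_1},\dots,s_{i_\ell})$ for $w$ with $i_1 = i$, and in the defining relation $a \prec b \iff (a > b,\ C_{i_a,i_b} < 0)$ position $1$ admits no $a$ with $1 \prec a$. Full commutativity of $w$ identifies this maximum with $x$.

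Given this, $x$ is automatically a maximal element of any order ideal containing it, in particular of $\psi_n$. Using the model of \Cref{prop:dominant-minuscule-demazure} and \Cref{rem:orderideals}, $e_i$ removes the topmost bead on runner $i$ whenever that bead is maximal in the ideal, so $e_i(\psi_n) \ne 0$. This gives $\varepsilon_i(\psi_n) = 1$ (it is automatically $\le 1$ by the elementary computation $e_i^2(v)=0$ that follows from the explicit formula $e_i(v)=s_iv$ in $B_w(\lambda)$). By the signature rule, $\psi_n$ therefore contributes a $-$ as the very last symbol of the sign pattern of $\Psi$. With nothing to its right, this $-$ cannot be paired with any $+$, so it survives cancellation and $\varepsilon_i(\Psi) \ge 1$. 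This contradicts $\Psi = e_i^{\varepsilon_i(\Phi)}(\Phi)$, which exhausts the $e_i$-string and forces $\varepsilon_i(\Psi) = 0$.

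The only real obstacle is the first observation: that $s_iw < w$ forces $x$ to be maximal in the whole heap, not just on its runner. Without maximality in all of $H(w)$, $x$ might fail to be maximal in $\psi_n$, and $e_i$ applied at the rightmost tensor factor could instead remove a lower bead. Once maximality in the whole heap is secured, the rest of the proof is a one-line application of the signature rule at the rightmost tensor position.
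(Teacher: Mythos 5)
Your proof is correct and follows essentially the same route as the paper's: reduce to the tensor factorization, use that $s_iw<w$ forces $x$ to be maximal in all of $H(w)$, and derive a contradiction with $\varepsilon_i(\Psi)=0$ from an uncancellable $-$ at the right end of the signature. If anything, you are slightly more economical (only the last factor $\psi_n$ is needed) and more explicit than the paper about why $x$ is maximal in the whole heap rather than just on its runner.
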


\begin{proof}
  Since $RPP(w,n)\subseteq B_w(\lambda)^{\otimes n}$ is upper semi-normal, $e_i^{\varepsilon_i(\Phi)}(\Phi)\neq 0$, and $\varepsilon_i(\Psi)=0$. Let $\psi_1\otimes \cdots \otimes \psi_n$ be the increasing chain of order ideals corresponding to $\Psi$. If $\Psi(x)\neq 0$, then there are tensor factors $\psi_k$ for which $x\in \psi_k$. These must occur at the rightmost positions in the tensor product since $RPP(w,n)$ consists of increasing chains of order ideals.

  Consider the $e_i/f_i$ sign pattern of $\Psi$. The beads corresponding to $x$ are all removable because $x$ is a maximal element, so they contribute $-$ signs to the sign pattern. Note that $x$ is a maximal element of $H(w)$ so these $-$ signs can not be followed by $+$ signs, so they can never be cancelled, therefore they contribute to $\varepsilon_i(\Psi)$, but this was shown to be zero, so we have a contradiction.
\end{proof}

\begin{proposition}\label{thm:conversegravsort}
  Let $\phi_1,\ldots ,\phi_n$ be an increasing chain of order ideals. Then the corresponding element $\Phi=\phi_1\otimes\cdots \otimes \phi_n$ is in the image of the embedding $B_w(n\lambda)\hookrightarrow (B_w(\lambda))^{\otimes n}$.
\end{proposition}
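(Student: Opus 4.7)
The plan is to proceed by induction on the length $\ell$ of $w$, using the Demazure recursion of \cite{Ka93}: if $s_iw <_L w$, then
\[
    B_w(n\lambda) \;=\; \bigl\{ f_i^m b : m \ge 0, \; b \in B_{s_iw}(n\lambda), \; f_i^m b \ne 0 \bigr\}.
\]
The base case $\ell = 0$ is immediate: here $w = e$, $H(w) = \emptyset$, and the only increasing chain $\emptyset \otimes \cdots \otimes \emptyset$ corresponds to the highest weight element $b_{n\lambda}$.

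For the inductive step, I would pick $i$ with $s_iw <_L w$ and fix a reduced word for $w$ starting with $s_i$. The first letter of this word corresponds to a maximal element $x$ of $H(w)$, and deleting it realizes $H(s_iw)$ as $H(w) \setminus \{x\}$. Moreover $s_iw$ remains $\lambda$-minuscule, since the defining identities for the tail $(s_{i_2},\ldots,s_{i_\ell})$ form a subset of those for $(s_{i_1},\ldots,s_{i_\ell})$, and clearly $\ell(s_iw) = \ell - 1$. Given an increasing chain $\Phi$, I would set $\Psi := e_i^{\varepsilon_i(\Phi)}(\Phi)$. By \Cref{lem:preserveincreasing}, $\Psi$ is still an increasing chain, and by \Cref{thm:zeroentry}, $\Psi(x) = 0$. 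Hence $x \notin \psi_k$ for any $k$, so each $\psi_k$ is an order ideal in $H(s_iw)$ and $\Psi \in \Rpp(s_iw, n)$.

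By the inductive hypothesis, $\Psi \in B_{s_iw}(n\lambda)$. Since the embedding $B(n\lambda) \hookrightarrow B(\lambda)^{\otimes n}$ is a morphism of crystals, the identity $\Phi = f_i^{\varepsilon_i(\Phi)}(\Psi)$ holds in $B(n\lambda)$, and the Demazure recursion then yields $\Phi \in B_w(n\lambda)$, closing the induction.

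The hardest part is the heap bookkeeping, namely verifying that fixing a reduced word starting with $s_i$ really identifies $H(s_iw)$ with $H(w)\setminus\{x\}$, that $s_iw$ inherits $\lambda$-minusculeness, and that the raised tensor $\Psi$ qualifies as an increasing chain for the smaller heap. Once these identifications are in place, the standard Demazure recursion does the rest of the work cleanly.
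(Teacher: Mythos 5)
Your proposal is correct and follows essentially the same route as the paper: induction on $\ell(w)$, using \Cref{thm:zeroentry} (together with \Cref{lem:preserveincreasing}) to view $\Psi = e_i^{\varepsilon_i(\Phi)}(\Phi)$ as an element of $\Rpp(s_iw,n)$, applying the inductive hypothesis, and recovering $\Phi$ from $\Psi$ by lowering operators via the defining recursion of the Demazure crystal. The extra bookkeeping you spell out (identifying $H(s_iw)$ with $H(w)\setminus\{x\}$ and checking $s_iw$ is still $\lambda$-minuscule) is left implicit in the paper but is exactly the content needed to make its terse argument rigorous.
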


\begin{proof}
  We use induction on $l(w)$. Let $\Phi\in RPP(w,n)$. By \Cref{thm:zeroentry}, we may think of $\Psi=e_i^{\varepsilon_i(\Phi)}(\Phi)$ as an element of $RPP(s_i w, n)$. As $l(s_iw)<l(w)$, we have $RPP(s_iw,n)\subseteq B_{s_iw}(n\lambda)$. Since $\Psi$ is obtained from $\Phi$ by applying only $e_i$-s, we have $\Phi\in B_w(n\lambda)$.
\end{proof}

We are now in a position to complete the proof of \Cref{thm:gravsort}.
\begin{proof}[Proof of \Cref{thm:gravsort}]
  The two containments for the equality
  \[
    RPP(w,n)=B_w(n\lambda)
  \]
  (where both are considered as subcrystals of $B_w(\lambda)^{\otimes n}$) are \Cref{thm:gravitysort} and \Cref{thm:conversegravsort}.
\end{proof}

\subsection{Tableaux, Gelfand--Tsetlin patterns, and RPPs}
\label{se:tableaux}

In type $A$, RPPs come from Gelfand--Tsetlin (or GT) patterns of rectangular semistandard Young tableaux.
Take $ \fg = \mathfrak{sl}_m $ and let $ \lambda = (\lambda_1\ge \cdots\ge \lambda_m = 0) $ be a dominant weight.  Consider the set of semistandard Young tableaux of shape $ \lambda $ with labels $\{1, \dots, m \} $ which we denote by $ SSYT(\lambda) $.  The GT pattern of such a tableau $\tau$ is a triangular array $ gt(\tau) = (\lambda^{(1)}, \lambda^{(2)}, \dots, \lambda^{(m)})$ where $\lambda^{(i)} = \lambda^{(i)}(\tau)$ is the shape of
the subtableau comprised of boxes labelled \(\{1,\dots,i\}\). The information of this array completely determines the tableau. 

For example,
\[
  gt\left(\young(113,224)\right) 
  = \begin{matrix}
    & & & 2 \\
    & & 2 & & 2 \\
    & 3 & & 2 & & 0\\
    3 & & 3 & & 0 & & 0
  \end{matrix} 
\]
When $\lambda = (n^{p}) = (n,\dots,n, 0, \dots, 0)$ for some $ 1 \le p \le m-1 $ and $ n \in \bN $, then $ gt(\tau) $ has the form
\[
gt(\tau) = \begin{matrix}
  & & & & & & \lambda^{(1)}_1 & \\
  & & & & & \Ddots & & \ddots & \\
  & & & & \lambda^{(m-p)}_1 & & & & \ddots &  \\
  & & & n & & \ddots & & & & \lambda^{(p)}_p &  \\
  & & \Ddots & & \ddots  & & \ddots & & \Ddots &  & 0 & \\
  & n & &  & & n & & \lambda^{(m-1)}_p &  & 0 &  & 0 & \\
  n & & \cdots & & \cdots & & n & & 0 & & \cdots  & & 0 
\end{matrix}
\]
Hence, the information necessary to reconstruct $ \tau $ is contained in a $p\times (m-p)$ rectangular array.

Consider the rectangular array $\Phi(\tau)$ obtained from this one by reflecting in a vertical axis, and then rotating 90 degrees clockwise. 
\[
\Phi(\tau) = 
\begin{matrix}
  & & \lambda_p^{(p)} \\
  & \Ddots & & \ddots \\
  \lambda_p^{(m-1)} & & & & \ddots \\
  & \ddots & & & & \lambda_1^{(1)} \\
  & & \ddots & & \Ddots \\ 
  & & & \lambda_1^{(m-p)} 
\end{matrix}  
\]
Since GT patterns satisfy \(\lambda^{(i+1)}_{k+1}\le \lambda^{(i)}_k\le \lambda^{(i+1)}_k\) for all \(1\le k\le i\le m\), we can regard $\Phi(\tau)$ as a RPP of height $ n$.
More precisely, it is a RPP of shape $H(w_0^J)$ where $J = \{1,\dots,m-1\}\setminus \{p\}$. In 1-line notation $w_0^J \in S_m$ is the permutation $m-p+1\, m-p + 2\dots m \, 1\, 2\dots m-p$. As a product of generators we can take $w_0^J = (s_{m-p}\cdots s_{m-2}s_{m-1})\cdots (s_2 \cdots s_p s_{p+1})(s_1\cdots s_{p-1} s_p)$. 

From the above discussion, we conclude the following.
\begin{theorem}\label{thm:gtrpp}
  Let $n,p,J$ be as above. 
  The map $\tau\mapsto \Phi(\tau)$ defines a bijection $ SSYT(n^p) \cong \Rpp(w_0^J,n) $.
\end{theorem}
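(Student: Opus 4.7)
The plan is to make explicit the combinatorial bijection suggested by the discussion preceding the theorem. I proceed in three steps: identify $H(w_0^J)$ with the $p \times (m-p)$ rectangular grid poset, verify that $\tau \mapsto \Phi(\tau)$ lands in $\Rpp(w_0^J, n)$, and write down an explicit inverse.

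For the first step, I would run the glass-bead construction of \Cref{sec:domminheaps} on the reduced word $(s_{m-p}\cdots s_{m-1})\cdots(s_2\cdots s_{p+1})(s_1\cdots s_p)$. Reading right-to-left, the $k$-th block from the right deposits $p$ beads on runners $k, k+1, \dots, k+p-1$, so the beads are indexed by pairs $(j, k) \in \{1,\dots,p\}\times\{1,\dots,m-p\}$ with the bead at $(j,k)$ sitting on runner $j+k-1$. A direct inspection of the resulting abacus (a standard fact for the minuscule Grassmannian permutation in type $A$; cf.\ \cite{St96}) shows that $(j,k)$ is covered in $H(w_0^J)$ by exactly $(j+1, k)$ and $(j, k+1)$ when those coordinates exist, so $H(w_0^J)$ is the $p \times (m-p)$ grid with componentwise order. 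In particular, the runner fibres $H(w_0^J)_r$ of \Cref{rmk:runners} are the antidiagonals $\{(j,k) : j+k-1 = r\}$.

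For the second step, I would unwind the reflect-then-rotate procedure preceding the statement. Under this transformation, the free GT entry $\lambda_k^{(i)}(\tau)$ (for $1 \le k \le p$ and $k \le i \le k + m - p - 1$, the range where $\lambda_k^{(i)}$ is not boundary-forced by the rectangular shape) lands at grid coordinate $(k, i-k+1)$ up to the overall reflection coming from the rotation. The bounds $0 \le \lambda_k^{(i)}(\tau) \le n$ are immediate from $\lambda^{(m)} = (n^p, 0^{m-p})$, and the interleaving inequalities $\lambda_{k+1}^{(i+1)} \le \lambda_k^{(i)} \le \lambda_k^{(i+1)}$ translate directly into the cover-wise order-reversing inequalities on the grid, so $\Phi(\tau) \in \Rpp(w_0^J, n)$.

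For the inverse, given $\Phi \in \Rpp(w_0^J, n)$, I would place $\Phi(k, j)$ at the free slot $\lambda_k^{(k+j-1)}$ and fill the remaining slots by the unique values forced by $\lambda^{(m)} = (n^p, 0^{m-p})$, namely $n$ for slots inside the rectangle and $0$ outside. The order-reversing condition on $\Phi$, combined with the forced boundary values, guarantees all of the interleaving inequalities, yielding a legitimate GT pattern of shape $(n^p)$ and hence a unique $\tau \in SSYT(n^p)$ with $\Phi(\tau) = \Phi$. The main obstacle is the bookkeeping in the second step: carefully tracking GT coordinates through the reflect-and-rotate procedure, and confirming that the $p(m-p)$ genuinely free GT entries correspond bijectively to grid cells, with the forced boundary entries exactly supplying the interleaving relations along the boundary of the heap. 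Once this identification is pinned down, the well-definedness, order-reversal, and bijectivity all fall out together.
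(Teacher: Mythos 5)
Your proposal is correct and follows essentially the same route as the paper, which deduces the theorem directly from the preceding discussion: the free Gelfand--Tsetlin entries of a rectangular tableau form a $p\times(m-p)$ array, the interleaving inequalities become the order-reversing condition after the reflect-and-rotate, and the free entries determine the tableau, giving bijectivity. Your additional bookkeeping (the explicit grid description of $H(w_0^J)$ and the explicit inverse) only makes the argument more complete; the one point to watch is the orientation of the covering relations, since with your indexing the bead $(j,k)$ is covered by $(j,k+1)$ but itself covers $(j+1,k)$ in the heap order, so the identification with the componentwise-ordered grid requires flipping one coordinate, exactly the reflection you already flag.
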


\begin{remark} \label{rem:SI}
	We warn the reader that this is not a crystal isomorphism with respect to the usual crystal structure on $ SSYT(n^p) $, as this bijection switches the roles of $ e_i $ with $ f_{m-i}$ and acts by $ w_0 $ on the weight.  In other words, this bijection implements the Sch\"utzenberger involution.
\end{remark}

\subsection{Toggle groups and Cactus groups}
\label{subsec:togglecactus}

In this section we will describe some additional motivation for thinking about RPPs, and outline some directions for future research related to toggles.

In \cite{BK72}, Bender and Knuth introduced involutions $t_i$ on semistandard Young Tableaux; they are now called \textbf{Bender--Knuth involutions}. They act by changing certain entries of a tableau from $i$ to $i+1$ and vice versa. More precisely, for each row of the tableau, declare an entry $i$ (or $i+1$) to be free if there is no $i+1$ (or $i$) in the same column. In each row, the free $i$'s and $i+1$'s occur in a single horizontal strip. Then $t_i$ acts be reversing the number of free $i$'s and the number of free $i+1$'s in each horizontal strip.

These involutions have some surprising applications. For example, in \cite{St02} Stembridge used them to give a quick proof of the Littlewood--Richardson rule. In \cite{BK95}, Berenstein and Kirillov considered the group generated by the Bender--Knuth involutions. 
They demonstrated that many combinatorial constructions on tableaux, for example, the Sch\"{u}tzenberger involution or promotion, can be described using elements of this group.

It is clear from the definition that the $t_i$'s are involutions and it is not difficult to see that $t_it_j=t_jt_i$ if $|i-j|>1$, but other relations in this group are more mysterious. Berenstein and Kirillov show that $(t_1t_2)^6=id$ and $(t_1q_i)^4=id$, where $q_i=t_1t_2t_1\cdots t_it_{i-1}\cdots t_1$ and $i\geq 3$, and they conjectured that these are the defining relations for this group (the \textbf{BK group}).

On the other hand, a closely related group, also generated by involutions, is the cactus group.
\begin{definition}
 Fix a semisimple Lie algebra $\fg$ with Dynkin diagram $I $.  For any connected subdiagram $ J \subseteq I$, let $ w_0^J $ be the longest element of the Weyl group $ W_J $ and let $ \theta_J : J \rightarrow J $ be defined by $ w_0^J(\alpha_j) = -\alpha_{\theta_J(j)}$.  The \textbf{cactus group} $ C_\fg $ is generated by elements $s_J$ indexed by connected subdiagrams of $J$ subject to the following relations:
\begin{enumerate}
\item $s_J^2=1\; \forall\; J\subseteq I$
\item $s_Js_{J^\prime}=s_{\theta_J(J^\prime)}s_J\; \forall\; J^\prime\subseteq J$
\item $s_Js_{J^\prime }=s_{J^\prime}s_J\; \forall\; J^\prime,J\subseteq I$ such that $J\cup J^\prime$ is not connected.
\end{enumerate}
There is a map $ C_\fg \rightarrow W $ given by $ s_J \mapsto w_0^J $.
\end{definition}

The cactus group was first studied in relation to crystals by Henriques and the third author \cite{HK}. In their constructions, they defined a map $\xi_B$ on a crystal $B$ of any Cartan type that generalizes the Sch\"{u}tzenberger involution in type $A$. In \cite{Hal}, Halacheva studied what happens when we start with a $\fg$-crystal $B$, then treat it as a $\fg_J$-crystal $B_J $ (where $J$ is a connected subdiagram), then apply $\xi_{B_{J}}$. In type $A$, Berenstein and Kirillov called this operation a ``partial Sch\"{u}tzenberger involution''. This can be considered as a bijection from $B$ (considered as a set) to itself. Halacheva showed that these elements $\xi_{B_{J}}$ satisfy the relations of the cactus group, thus the cactus group acts on the crystal $ B$.
 
In type $ A$, Berenstein and Kirillov described the partial Sch\"{u}tzenberger involutions in terms of the generators of the BK group.  Thus, there is a map from the cactus group to the BK group, and the cactus group action on $\mathfrak{sl}_m$-crystals is compatible with this map. In \cite{CGP}, Chmutov, Glick and Pylyavksyy independently showed that there is a map from the cactus group to the BK group.

Since the cactus group is defined for any Cartan type, this raises the question of whether there is an analogue of the Bender--Knuth involutions for other types.

Let $P$ be any poset.  The Bender--Knuth involutions are examples of sequences of combinatorial toggles (some closely related and well-studied concepts are combinatorial rowmotion and promotion \cite{SW12}, \cite{CFdF95}). A \textbf{combinatorial toggle} is a map $t_x: J(P)\to J(P)$ given by
\[
  t_x(S)=
  \begin{cases}
    S\cup \{x\},&\text{ if }x\not\in S\text{ and }S\cup \{x\} \in J(P) \\
    S\setminus \{x\},&\text{ if }x\in S\text{ and }S\setminus \{x\} \in J(P) \\
    S &\text{ otherwise.}
  \end{cases}
\]
They are called combinatorial toggles since their effect is to ``toggle'' an element into or out of an order ideal. There is an extension of toggles to $RPP(w,n)$ given by the formula:
\[
  (t_x(\Phi))(x)=\max_{y<x}( \Phi(y) )+\min_{z> x} (\Phi(z) )-\Phi(x),
\]
where we define $\max_{y<x}(\Phi(y))=n$ if there is no $y$ such that $y<x$, and similarly $\min_{z>x}(\Phi(z))=0$ if there is no $z$ such that $z>x$. The other entries of the RPP are unchanged, that is, $(t_x(\Phi))(y)=\Phi(y)$ for $y\neq x$. 

Now, we specialize to the case where $ P = H(w) $ is a dominant minuscule heap. 
Note that if $ \pi(x) = \pi(y) $ then $t_x $ and $ t_y$ commute (here $ \pi : H(w) \rightarrow I $ is the map described after \Cref{def:heap}).  

When we work with a rectangular dominant weight $ \lambda$ and interpret semistandard Young tableaux of shape $ \lambda $ as RPPs of shape $ H(w)$ as in \Cref{se:tableaux}, the BK involutions can be reinterpreted using toggles on $ \Rpp(w,n)$.
\begin{align}
  t_i&=\prod_{\pi(x)=i}t_x, \label{eqn:BKops}
\end{align}

This lets us define a \textbf{toggle group} for arbitrary types, as long as we fix a heap $H(w)$. To be precise, we define involutions $t_i : \Rpp(w,n) \rightarrow \Rpp(w,n) $, for $ i \in I$, by the formula in \cref{eqn:BKops}, and we define the toggle group $Tog(w) $ as the subgroup of $\prod_n S_{RPP(n,w)}$ generated by the $t_i$. Here $S_{RPP(n,w)}$ is the symmetric group permuting $RPP(n,w)$.

To be able to compare the actions of the cactus and toggle groups, we have to restrict our attention to $\fg$-crystals, since Demazure crystals do not admit an action of the cactus group in general. To make the exposition easier, for the rest of this section we will assume that $\fg$ is simple.

\begin{definition}\label{def:bidomindantminuscule}
  Let $w\in W$. We say that $w$ is \textbf{bidominant minuscule} if $w$ is minuscule with a dominant witness $\lambda$ such that $w(\lambda)$ is antidominant.
\end{definition}

In particular, if $w$ is bidominant minuscule, then $RPP(H(w),n)$ is a $\fg$-crystal, not just a Demazure crystal. The following result gives an alternative characterization of bidominant minuscule elements.

\begin{proposition}
  \label{thm:inversedominantminuscule} Let $w\in W$ and assume that a reduced word for $w$ uses all simple reflections of $W$. Then $w$ is bidominant minuscule if and only if $w$ and $w^{-1}$ are both dominant minuscule.
\end{proposition}

\begin{proof}
  Assume that $w$ is bidominant minuscule. Let $(s_{i_1},\ldots , s_{i_\ell})$ be a reduced word for $w$, then $(s_{i_\ell},\ldots , s_{i_1})$ is a reduced word for $w^{-1}$. Let $\mu=-w_0(\lambda)=-w(\lambda)$. Since $w(\lambda)$ is antidominant by assumption, $\mu$ is dominant. We claim that $\mu$ is a witness for $w^{-1}$. To see this, consider
  \begin{align*}
    s_{i_k} \cdots s_{i_1} (\mu) &= - (s_{i_k} \cdots s_{i_1}) (s_{i_1} \cdots s_{i_k})(s_{i_{k+1}} \cdots s_{i_\ell})(\lambda) \\
                                 &= - (s_{i_{k+1}} \cdots s_{i_\ell})(\lambda) \\
                                 &= -(\lambda -\alpha_{i_\ell} - \ldots - \alpha_{i_{k+1}}) \\
                                 &= \mu - \alpha_{i_k}-\ldots - \alpha_{i_1}                                   
  \end{align*}
  so $w^{-1}$ is $\mu$-minuscule, and therefore dominant minuscule.

  For the other direction, assume that $w$ and $w^{-1}$ are both dominant minuscule with respective witnesses $\lambda$ and $\mu$. That is,
  \[
    s_{i_k}\cdots s_{i_\ell}(\lambda) = \lambda -\alpha_{i_\ell} - \ldots -\alpha_{i_k}
  \]
  and
  \[
    s_{i_k}\cdots s_{i_1}(\mu) = \mu - \alpha_{i_1}-\ldots - \alpha_{i_k}
  \]
  for all $k$ with $\lambda, \mu$ dominant.

  By a similar computation as above, we have
  \[
    s_{i_k}\cdots s_{i_1}(-w(\lambda)) = -w(\lambda) -\alpha_{i_1}-\ldots - \alpha_{i_k}
  \]
  for all $k$.  Therefore,
  \[
    s_{i_k}\cdots s_{i_1}(\mu+w(\lambda)) = \mu+w(\lambda)
  \]
  for all $k$. Since $w$ (and therefore $w^{-1}$) uses all simple reflections in $W$, this means that $\mu+w(\lambda)$ is fixed by all simple reflections, and therefore $\mu=-w(\lambda)$. In particular, $-w(\lambda)$ is dominant, therefore $w(\lambda)$ is antidominant and hence $w$ is bidominant minuscule.
\end{proof}

Now we can use \Cref{dommin-condition} to obtain a heap-theoretic consequence of $w$ being bidominant minuscule:

\begin{proposition}
  \label{thm:bidomlemma} If $\mathbf{w}=(s_{i_1}, \ldots , s_{i_\ell})$ is a reduced word for $w$ and $w$ is bidominant minuscule then the conditions of \Cref{dommin-condition} are satisfied, and, in addition, the first occurrence of each generator in $\mathbf{w}$ is preceded by at most one generator that does not commute with it.
\end{proposition}

Assume for the rest of this section that $ w $ is bidominant minuscule with dominant witness $ \lambda$, so that $ \Rpp(w,n) = B(n\lambda) $ is $\fg$-crystal.  To help with computations, it is useful to note the toggle group interacts nicely with the weight of an RPP.

\begin{lemma}
  For any $ \Phi \in \Rpp(w,n)$, we have $ wt(t_i(\Phi)) = s_i (wt(\Phi))$.
\end{lemma}

\begin{proof}
  We compute, using \Cref{eq:weight}
  \begin{align*}
    wt(t_i(\Phi)) &= n\lambda - \sum_{x\in H(w)}t_i(\Phi)(x)\alpha_{\pi(x)} \\
                  &= n\lambda - \sum_{\pi(x)\neq i}\Phi(x)\alpha_{\pi(x)} - \sum_{\pi(x)=i}\left( t_i(\Phi)\right)(x) \alpha_i \\
                  &= \left(n\lambda - \sum_{x \in H(w)}\Phi(x)\alpha_{\pi(x)}\right) - \sum_{\pi(x)=i} \left( \max_{y<x}(\Phi(y)) + \min_{z>x}(\Phi(z)) -2 \Phi(x) \right) \alpha_i \\
                  &= wt(\Phi) - \sum_{\substack{x\in H(w)\\ \pi(x)=i}} \left( \max_{y<x}(\Phi(y)) + \min_{z>x}(\Phi(z)) -2 \Phi(x) \right) \alpha_i
  \end{align*}
  and
  \begin{align*}
    s_i(wt(\Phi)) &= wt(\Phi) - \left\langle n\lambda - \sum_{x\in H(w)}\Phi(x)\alpha_{\pi(x)} , \alpha_i \right\rangle \alpha_i. \\
                  &= wt(\Phi) - \left( \langle n\lambda , \alpha_i \rangle - \left\langle \sum_{\substack{x\in H(w)\\ \pi(x)=i}} \Phi(x) \alpha_i,\alpha_i \right\rangle - \left\langle \sum_{\substack{y\in H(w) \\ \langle \pi(y),\alpha_i\rangle=-1}} \Phi(y)\alpha_{\pi(y)},\alpha_i\right\rangle \right)\alpha_i \\
                                                                                            &= wt(\Phi)- \left( \langle n\lambda , \alpha_i \rangle -2\sum_{\substack{x\in H(w)\\ \pi(x)=i}} \Phi(x) + \sum_{\substack{y\in H(w)\\\langle \pi(y),\alpha_i\rangle=-1}} \Phi(y)\right)\alpha_i
  \end{align*}



  So to get the desired equality, we have to show that
  \begin{equation}
    \sum_{\substack{x\in H(w)\\\pi(x)=i}} \max_{y<x}(\Phi(y)) + \min_{z>x}(\Phi(z)) = \langle n\lambda, \alpha_i \rangle +\sum_{\substack{y\in H(w)\\\langle \pi(y),\alpha_i\rangle=-1}}\Phi(y) \label{eq:weightsimplified}
  \end{equation}
  
  Notice that $\langle \pi(y),\alpha_i\rangle=-1$ if and only if the runner $\pi(y)$ is adjacent to runner $i$. In particular, the sum $\sum_{\langle \pi(y),\alpha_i\rangle=-1} \Phi(y)$ is the same as adding up all the entries of $\Phi$ on runners adjacent to $i$. 

  We have to show that each entry on the right-hand-side of \Cref{eq:weightsimplified} occurs on the left-hand side exactly once as a minimum or maximum for some $x$.

  Label the beads on runner $i$ top to bottom as $x_1,\ldots, x_m$ (so $x_1$ is the highest, and $x_k$ is the lowest bead on runner $i$). By \Cref{thm:lminuscule}, for $k=1,\ldots , m-1$, between $x_k$ and $x_{k+1}$ there are exactly two beads on runners adjacent to $i$. Let $y_k$ denote the one with larger $\Phi$-value and let $z_k$ denote the one with smaller $\Phi$-value. Notice that we have $\Phi(y_k)=\max_{y<x_{k+1}}\Phi(y)$ and $\Phi(z_k)=\min_{z>x_{k}}\Phi(z)$.

  There are possibly two more beads that are unaccounted for at this point, as by \Cref{thm:bidomlemma} there may be at most one bead on an adjacent runner above $x_1$ and at most one below $x_m$.
  If there is a bead on an adjacent runner above $x_1$ then this corresponds to the $\min_{z>x_1}(\Phi(z))$ term on the left-hand side. If there is no such bead, then $x_1$ is a highest element, and by definition $\min_{z>x_1}(\Phi(z))=0$.
  If there is a bead on an adjacent runner below $x_m$ then this corresponds to the $\max_{y<x_m}(\Phi(y))$  term on the left-hand side, and the lowest element of the heap is not on runner $i$, so $\lambda=\omega_j$ for some $j\neq i$, so $\langle n\lambda, \alpha_i \rangle=0$. If there is no such bead then $x_m$ is a lowest element, so $\lambda=n\omega_i$ and by definition $\max_{y<x_m}(\Phi(y))=n=\langle n\lambda , \alpha_i\rangle$. 

  So in all cases, \Cref{eq:weightsimplified} is satisfied and we are done.

\end{proof}

Now we are in a similar situation as in type $A$; for any $n \in \mathbb N$, we have a crystal $ B(n\lambda) = \Rpp(H(w), n) $, by \Cref{thm:gravsort}, with actions of two finitely generated groups, the cactus group $C_\fg $ (depending only on $ \fg$) and the toggle group $ Tog(w)$ (which depends on $ \lambda$, or equivalently $ w$).

\begin{conjecture}
  There is a surjective map $ C_\fg \rightarrow Tog(w)$ such that the action of the cactus group on $ B(n\lambda) = \Rpp(w,n) $ factors through this map.
\end{conjecture}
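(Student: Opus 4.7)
The plan is to construct the desired map by sending each cactus generator $s_J$ to the Schützenberger involution $\xi_{B_J}$, where we view $B(n\lambda) = \Rpp(w,n)$ as a $\fg_J$-crystal by restriction to the subdiagram $J \subseteq I$. By Halacheva's theorem, the operators $\xi_{B_J}$ already satisfy the cactus relations inside the permutation group $S_{\Rpp(w,n)}$, so the content of the conjecture reduces to two points: (a) each $\xi_{B_J}$ actually lies in the toggle subgroup $Tog(w)$; and (b) the resulting homomorphism is surjective.

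For (b), the key step is to prove the single-node identity $\xi_{B_{\{i\}}} = t_i$. Restricting $\Rpp(w,n)$ to node $i$ gives a normal $\mathfrak{sl}_2$-crystal, on which $\xi_{B_{\{i\}}}$ by definition reverses each $i$-string. I would verify that $t_i$ acts the same way via a signature-rule argument: viewing $\Phi$ as an increasing chain $\phi_1 \subseteq \cdots \subseteq \phi_n$ of order ideals in $H(w)$, each $\phi_k$ contributes at most one sign to the $i$-signature (since $\phi_k \in J(H(w))$ is minuscule), namely a $+$ if a bead can be added on runner $i$ and a $-$ if one can be removed. A local analysis on consecutive pairs $(\phi_k, \phi_{k+1})$ using \Cref{thm:signrule} shows that the simultaneous toggle $t_i = \prod_{\pi(x) = i} t_x$ swaps the unmatched $+$'s with the unmatched $-$'s and fixes the matched pairs, which is precisely the action of the $\mathfrak{sl}_2$-Schützenberger involution. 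Once this is in hand, surjectivity follows because the $t_i$'s generate $Tog(w)$ by definition and each $t_i = \xi_{B_{\{i\}}}$ lies in the image.

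Step (a) is the main obstacle: for general connected $J$, we must realize $\xi_{B_J}$ explicitly as a word in the $t_i$'s. In type $A$ this is the content of Berenstein--Kirillov and Chmutov--Glick--Pylyavskyy, who express the restricted Schützenberger involutions in terms of the products $q_i = t_1 t_2 t_1 \cdots t_i t_{i-1} \cdots t_1$. In the general minuscule heap setting I would proceed by induction on $|J|$: choose $i \in J$ with $s_i w_0^J < w_0^J$, and seek a factorization of $\xi_{B_J}$ as a composition of $\xi_{B_{J \setminus \{i\}}}$ (in $Tog(w)$ by induction) with a prescribed product of $t_j$'s indexed by a reduced word for $w_0^J$. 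Controlling this factorization requires understanding $B(n\lambda)$ as a $\fg_J$-crystal — which by \Cref{prop:dominant-minuscule-demazure} decomposes into minuscule Demazure crystals of Levi subalgebras and so still admits a heap/RPP description — and then matching the recursion for $\xi_{B_J}$ against the corresponding recursion for the toggle word. With such a factorization established, the assignment $s_J \mapsto \xi_{B_J}$ gives a well-defined group homomorphism $C_\fg \to Tog(w)$ that is surjective by step (b), and through which the cactus action on $\Rpp(w,n)$ factors tautologically.
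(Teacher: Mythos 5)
This statement is a \emph{conjecture} in the paper; the authors offer no proof, only partial evidence, and that evidence directly contradicts the key step of your argument. Your step (b) rests on the identity $\xi_{B_{\{i\}}} = t_i$, i.e.\ that the single runner toggle coincides with the $\mathfrak{sl}_2$-crystal string reversal at node $i$. This is false. The paper itself records a counterexample: in type $D_4$ on $B(n\omega_1)$ the authors state that $t_1$ does \emph{not} act as $s_{\{1\}}$ (they conjecture it acts as $s_{\{2\}}s_{\{1\}}s_{\{1,2\}}$), and in their type $D$ spin formulas $t_2 \sim s_{\{1\}}s_{\{1,2\}}s_{\{1\}} \neq s_{\{2\}}$. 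The failure is already visible in type $A$ and is exactly the classical fact that Bender--Knuth involutions are not crystal reflection operators. Concretely, take the heap of \Cref{example:heap} with $n=2$ and the chain $\phi_1 \subseteq \phi_2$ where $\phi_1$ consists of the bottom bead on runner $2$ and $\phi_2$ consists of that bead together with the beads on runners $1$ and $3$. The $2$-signature is $-\,+$, which cancels completely, so $\varepsilon_2 = \varphi_2 = 0$ and $\xi_{B_{\{2\}}}$ fixes this RPP; but $t_2$ removes the bottom bead from $\phi_1$ and adds the top bead to $\phi_2$, producing a different RPP of the same weight. So your ``local analysis'' claim that matched $-+$ pairs are fixed by $t_i$ is precisely where the argument breaks: $t_i$ toggles \emph{every} toggleable bead, including those contributing cancelled signs, whereas the string reversal only sees the reduced signature. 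Consequently surjectivity does not follow from your step (b), and the assignment $s_{\{i\}} \mapsto t_i$ is not even available as a starting point.

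Step (a) is likewise not a proof but a restatement of the problem: realizing each $\xi_{B_J}$ as an explicit word in the $t_i$'s (and, for surjectivity, each $t_i$ as a word in the $\xi_{B_J}$'s) is exactly the open content of the conjecture. The inductive factorization you propose is not carried out, and the paper's evidence (e.g.\ the identity $t_4t_2t_4t_2t_4 \sim s_{\{2\}}$, verified there only by ``a tedious, albeit elementary, computation'') shows that the required words are neither the naive ones nor uniform across types and weights. In short: the statement remains open, and the proposed proof both leaves the genuinely hard step unaddressed and hinges on a single-node identity that the paper explicitly refutes.
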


For example, in type $D_4$, for the crystal $B(n\omega_1)$, the toggles $t_3$ and $t_4$ act just like the cactus group elements $s_{\{3\}}, s_{\{4\}}$ do (this is not too difficult to see because of weight considerations), but $t_1$ does not act as $s_{\{1\}}$, in fact, we conjecture that $t_1$ acts as the element $s_{\{2\}}s_{\{1\}}s_{\{1,2\}}$, and we have many similar conjectures. Some of these we can verify; for example, a tedious, albeit elementary, computation shows that the element $t_4t_2t_4t_2t_4$ acts as the element $s_{\{2\}}$.

Based on computational evidence, we conjecture the following:

\begin{conjecture}
  For type $D_m$ and the crystal $B(n\omega_1)$, the action of the cactus group is generated by the action of cactus generators corresponding to one and two element subsets of $I$.
\end{conjecture}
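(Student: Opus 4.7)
The plan is to leverage Halacheva's identification of $s_J$ with the partial Schützenberger involution $\xi_{B_J}$ on the $\fg_J$-restriction of $B(n\omega_1)$, together with the explicit RPP model $B(n\omega_1) = \Rpp(w_0^{\{2,\dots,m\}}, n)$ established in this paper. The strategy is to show that for every connected subdiagram $J$ with $|J|\ge 3$, the permutation of $\Rpp(w_0^{\{2,\dots,m\}},n)$ induced by $s_J$ lies in the subgroup generated by $\{s_K : |K| \le 2\}$.

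First, I would give an explicit description of the heap $H(w_0^{\{2,\dots,m\}})$ in type $D_m$: it consists of $2(m-1)$ beads forming two chains that meet at the fork through runners $m-2, m-1, m$, with $2m$ order ideals corresponding to the weights $\pm e_1,\dots,\pm e_m$ of $V(\omega_1)$. Then, for each connected subdiagram $J \subseteq I$, I would decompose $B(\omega_1)$ as a $\fg_J$-crystal using weights — this is straightforward since $V(\omega_1)$ is minuscule — and extend to $B(n\omega_1) \hookrightarrow B(\omega_1)^{\otimes n}$. A key input is to identify the ``trivial'' cactus generators, namely those $s_J$ for which every $\fg_J$-summand appearing in $B(\omega_1)$ has highest weight fixed by $w_0^J$; for these, $\xi_{B_J}$ acts as the identity and the relation $s_J = 1$ holds in $\mathrm{Sym}(B(n\omega_1))$.

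Third, for non-trivial $s_J$ with $|J|\ge 3$, I would seek expressions as products of $s_K$ with $|K|\le 2$ by combining the cactus relations $s_J s_{J'} s_J = s_{\theta_J(J')}$ for $J'\subseteq J$, disjoint commutation, and the trivial relations obtained above. The conjectural identities noted in the paper (for example, $t_1$ behaving like $s_{\{2\}} s_{\{1\}} s_{\{1,2\}}$ in type $D_4$, or $t_4 t_2 t_4 t_2 t_4 = s_{\{2\}}$) suggest an inductive pattern starting from small $m$. I would attempt an induction on $m$ that exploits the natural inclusion $D_{m-1}\hookrightarrow D_m$: for any connected $J\subseteq D_m$ with $|J|\ge 3$ that avoids node $1$, the $\fg_J$-action factors through a $D_{m-1}$-type situation on a smaller minuscule-type crystal, reducing the claim to the inductive hypothesis.

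The main obstacle will be the subdiagrams of type $D_k$ (for $3\le k\le m$) containing the fork $\{m-2,m-1,m\}$, where the Schützenberger involution is genuinely non-trivial and interacts subtly with the branch point. For these, I expect a case analysis depending on whether $J$ contains node $1$ and/or the fork, and the final reduction may require proving, as a lemma, that certain specific length-$k$ or longer products of small generators realize the same permutation of $\Rpp(w_0^{\{2,\dots,m\}}, n)$ as $s_J$ — a computation that is combinatorially explicit via toggles but may be delicate for large $k$.
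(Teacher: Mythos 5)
This statement is left as a \emph{conjecture} in the paper, supported only by computational evidence (together with a few specific identities such as $t_4t_2t_4t_2t_4$ acting as $s_{\{2\}}$, verified by direct computation); the paper contains no proof, so there is nothing to compare your argument against. More importantly, what you have written is a strategy outline rather than a proof, and the decisive step is missing. The conjecture asserts a statement about the \emph{image} of $C_\fg$ in $\prod_n \mathrm{Sym}(B(n\omega_1))$, and in the abstract cactus group no generator $s_J$ with $|J|\ge 3$ is a product of generators $s_K$ with $|K|\le 2$ (the relations $s_Js_{J'}=s_{\theta_J(J')}s_J$ and the commutations never lower the size of the largest subdiagram involved). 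So any proof must exhibit, for every connected $J$ with $|J|\ge 3$, an explicit word in the small generators and verify that it induces the same permutation as $\xi_{B_J}$ on $B(n\omega_1)$ for all $n$ simultaneously. Your proposal defers exactly this (``I would seek expressions\dots'', ``may require proving, as a lemma\dots delicate for large $k$''), and explicitly flags the hardest cases --- the subdiagrams containing the trivalent node --- as unresolved. That is the content of the conjecture, not a reduction of it.

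There is also a concrete flaw in the proposed induction on $m$. For a connected $J$ of type $D_{m-1}$ (or $A_k$) inside $D_m$, the restriction of $B(n\omega_1)^{D_m}$ to $\fg_J$ is a direct sum of $\fg_J$-crystals $B(\mu)$ whose highest weights $\mu$ are generally \emph{not} of the form $k\omega_1$ for $\fg_J$; the element $\xi_{B_J}$ acts on each such summand by the $\fg_J$-Sch\"utzenberger involution of $B(\mu)$. Hence the inductive hypothesis --- a statement about the cactus action on $B(n\omega_1)$ for $D_{m-1}$ --- does not control the permutation that $s_J$ induces on $B(n\omega_1)$ for $D_m$: a word in small generators that reproduces $\xi_{B_J}$ on the crystals $B(k\omega_1)$ of $\fg_J$ need not reproduce it on the other summands $B(\mu)$ occurring in the restriction, and a relation valid in $\mathrm{Sym}\bigl(B^{D_{m-1}}(n\omega_1)\bigr)$ need not hold in $\mathrm{Sym}\bigl(B^{D_m}(n\omega_1)\bigr)$. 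Similarly, the ``trivial generator'' observation requires checking \emph{every} summand $\mu$ of the restriction (for every $n$), not just the summands of $B(\omega_1)$. As it stands, the proposal neither closes the conjecture nor reduces it to a finite, verified computation.
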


This conjecture does not hold for other type $D_m$ crystals. For representations associated to spin nodes $\omega_{m-1}$ and $\omega_m$, some unpublished results of Frieden and Thomas can be used to show that
\begin{align*}
t_1 &\sim s_{\{1\}} \\
t_2 &\sim s_{\{1\}}s_{\{1, 2\}} s_{\{1\}} \\    
t_k &\sim s_{\{1, 2, \ldots, k-1 \}} s_{\{1,2,\ldots , k\}} s_{\{1, 2, \ldots, k-1 \}} s_{\{1, 2, \ldots, k-2 \}}\text{ for }3\leq k \leq m-2,
\end{align*}
where $\sim$ means that the elements of the toggle and cactus groups act in the same way on the crystals $B(n\omega_{m-1})$ and $B(n\omega_m)$. Notice that $t_1$ acts as the element $s_{\{1\}}$ in this case.

\section{Heaps and preprojective algebra modules}
\label{sec:lambdamod}

\subsection{Preprojective algebra and quiver Grassmannians}
\label{subsec:preprojquivergrs}
Let $Q=(I,E)$ be an orientation of the Dynkin diagram of $ \fg$. To each arrow $a:i\to j$ in $E$ we associate an arrow $a^\ast:j\to i$ going in the opposite direction, and define $E^\ast = \{a^\ast : a \in E\}$. 
Let $\overline{E} = E\sqcup E^\ast$ and extend ${}^\ast$ to $\overline{E}$ by declaring $(a^\ast)^\ast = a$. Let $\epsilon:\overline{E}\to\{\pm 1\}$ be defined by $\epsilon(a)=1$ and $\epsilon(a^\ast)=-1$.
The data of $(I,\overline{E},\epsilon)$ is called the double of $Q$ and denoted $\overline Q$.  For any arrow $a : i \to j$ in $ \overline{E} $, we will write $\text{tail}(a) = i$, $\text{head}(a) = j$.
\begin{definition}\label{def:preproj}
	The preprojective algebra $\Pi$ is the quotient of the path algebra of $\overline{Q}$ by the relation 
  \smash{$\sum_{a\in \overline{E}} \epsilon(a) a a^\ast = 0$}.
\end{definition}

A $ \Pi$-module $M $ can be described using the following data.  First, we have a vector space decomposition $ M = \bigoplus_{i\in I} M_i $ (this comes from the ``zero-length'' paths).  Second, we have linear maps $ a : M_i \rightarrow M_j $ for each arrow $ a : i \to j $ in $ \overline{E} $.  
These linear maps have to satisfy the preprojective algebra relation, which imposes that at each vertex $ i \in I $, we have
$$
  \sum_{\substack{a\in \overline{E}\\ \text{head}(a) = i}} \epsilon(a) a a^\ast = 0 \,. 
$$
 Given a $ \Pi$-module $M$, we record the dimensions of all $ M_i$ as a vector $\dimvec M \in \bN^I$.

The simple $\Pi$-modules are the 1-dimensional modules $ S(i) $ consisting of a 1-dimensional vector space at vertex $ i $ and 0's elsewhere (and of course all the arrows act by 0).  The \textbf{socle} $ \soc(M)$ of a $ \Pi$-module $ M$ is the maximal semisimple submodule of $ M $.

Every simple module $ S(i) $ has an injective hull denoted $T(i)$.  
$T(i)$ is the unique (up to isomorphism) $ \Pi$-module with dimension vector $ \bv = (v_i)$, such that $ \omega_i - w_0 \omega_i = \sum v_j \alpha_j $, and socle $ S(i) $.

More generally, for each $ w \in W $, by \cite[Theorem 3.1]{mvbasis}, there is a unique (up to isomorphism) $ \Pi$-module $T(i,w) $ with dimension vector $ \bv $, where $ \omega_i - w \omega_i = \sum v_j \alpha_j $, and socle $ S(i)$ (unless $ w \omega_i = \omega_i $, in which case $ T(i,w) = 0 $).

Let $ \lambda $ be a dominant weight and write $ \lambda = \sum n_i \omega_i $.  Associated to such $ \lambda $, we consider the direct sums $ T(\lambda) := \bigoplus_i T(i)^{\oplus n_i} $ and $ T(\lambda, w): = \bigoplus_i T(i,w)^{\oplus n_i} $.

Note that $ T(\lambda, w) $ is the unique module of dimension vector  $ \bv $, where $ \lambda - w \lambda = \sum v_j \alpha_j $, and socle contained in $ \oplus_i S(i)^{\oplus n_i}$.  Note that $ T(\lambda, w) $ is a submodule of $ T(\lambda) $, since $ T(\lambda) $ is the injective hull of $ \oplus_i S(i)^{\oplus n_i}$.

We will study the quiver Grassmannians
$$
  \Gr(T(\lambda)) := \left \{ M \subset T(\lambda) : M \text{ is a $ \Pi$-submodule} \right\}
$$
and $ \Gr(T(\lambda,w))$, defined similarly.  
These spaces have connected components $ \Gr(\bv, T(\lambda))$ and $\Gr(\bv, T(\lambda,w)) $ resp.\ where $ \bv \in \bN^I $ records the dimension vector of $ M $.

The following result follows by transporting the Saito \cite{Sai02} crystal structure on the irreducible components of cores of Nakajima quiver varieties (and its extension to Demazure crystals \cite[Prop 6.1]{Sav06}) with the Savage--Tingley isomorphism \cite[Prop 4.12]{savage2011quiver}. 

\begin{theorem} 
  \label{thm:DemCrystalGeometric}
	\begin{enumerate} \item There is a crystal structure on $ \Irr \Gr(T(\lambda)) $ making it isomorphic to $B(\lambda)$. 
		\item The inclusion $ T(\lambda, w) \subset T(\lambda)$, gives rise to an inclusion $ \Irr \Gr(T(\lambda, w)) \subset \Irr \Gr(T(\lambda)) $.  
		\item The crystal structure on $ \Irr \Gr(T(\lambda)) $ restricts to a crystal structure on $ \Irr \Gr(T(\lambda,w)) $ identifying it with the Demazure crystal $ B_w(\lambda) \subset B(\lambda)$.
	\end{enumerate}
\end{theorem}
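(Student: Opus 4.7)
The plan is to reduce the theorem to three known results and combine them via the Savage--Tingley isomorphism. Write $\lambda = \sum_i n_i \omega_i$ and set the framing dimension vector $\bw = (n_i)_{i \in I}$. Since $T(\lambda) = \bigoplus_i T(i)^{\oplus n_i}$ is the injective hull of $\bigoplus_i S(i)^{\oplus n_i}$, Savage--Tingley \cite[Prop.~4.12]{savage2011quiver} yields, componentwise in $\bv$, an isomorphism of varieties $\Gr(\bv, T(\lambda)) \cong \mathfrak{L}(\bv, \bw)$ with the core of the Nakajima quiver variety associated to $\bw$. Taking irreducible components and applying Saito's theorem \cite[Th.~4.6.4]{Sai02} endows $\bigsqcup_\bv \Irr \mathfrak{L}(\bv, \bw) \cong \Irr \Gr(T(\lambda))$ with a crystal structure isomorphic to $B(\lambda)$, proving part (1).

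For part (2), note that any $\Pi$-submodule of $T(\lambda, w)$ is \emph{a fortiori} a submodule of $T(\lambda)$, so $\Gr(T(\lambda, w))$ is a closed subvariety of $\Gr(T(\lambda))$. To upgrade this to an inclusion of irreducible components and to prove part (3), I would show that the subvariety $\Gr(T(\lambda, w))$ corresponds, under Savage--Tingley, to the closed Demazure locus in the Nakajima core whose components Savage \cite[Prop.~6.1]{Sav06} identifies with $B_w(\lambda)$. The key input is the characterization of $T(\lambda, w)$ as the unique submodule of $T(\lambda)$ with dimension vector $\lambda - w\lambda$ and socle contained in $\soc(T(\lambda))$. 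Once the match is established, part (3) is immediate from \cite[Prop.~6.1]{Sav06}, and a dimensional comparison on the Nakajima side then forces each irreducible component of $\Gr(T(\lambda, w))$ to be a full irreducible component of $\Gr(T(\lambda))$, giving part (2).

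The main obstacle is establishing this matching. My approach would be an induction on $\ell(w)$ paralleling Savage's inductive construction: both the module-theoretic locus $\Gr(T(\lambda, w))$ and Savage's geometric Demazure locus are characterized as the smallest closed union of components of $\Gr(T(\lambda))$ containing the zero submodule and stable under the lowering operators $f_i$ for a reduced word for $w$. For the inductive step, one needs that $T(\lambda, s_i w) \subset T(\lambda, w)$ with cokernel a direct sum of copies of $S(i)$, which should follow from the uniqueness of $T(\lambda, w)$ via its socle and the identity $(\lambda - w\lambda) - (\lambda - s_i w\lambda) \in \bN \alpha_i$ on dimension vectors. Carrying this bookkeeping through cleanly, while ensuring that the socle condition on the module side translates to the correct stability/support condition Savage imposes on Nakajima data, is the delicate step; everything else is transport of structure.
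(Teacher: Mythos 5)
Your proposal is correct and takes essentially the same route as the paper: the authors give no independent argument, stating only that the theorem follows by transporting Saito's crystal structure on cores of Nakajima quiver varieties and Savage's Demazure extension \cite[Prop 6.1]{Sav06} through the Savage--Tingley isomorphism \cite[Prop 4.12]{savage2011quiver}, which is precisely the reduction you carry out. Your additional sketch of how $\Gr(T(\lambda,w))$ matches the Demazure locus (via the socle/dimension-vector characterization of $T(\lambda,w)$ and induction on $\ell(w)$) supplies detail the paper leaves implicit, but does not change the approach.
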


\subsection{Orientations and colouring}
\label{subsec:orientations}

We now enter a short combinatorial interlude to discuss a certain 4-colouring associated to a heap; this discussion is needed to choose signs appropriately when constructing the $\Pi$-module associated to a given heap in the next subsection.

First, fix a $2$-colouring $c' : I \to \{-, +\}$, which is possible since all Dynkin diagrams are trees. Then, choose the orientation of the Dynkin diagram such that all arrows go from $ (-) $-vertices to $ (+)$-vertices. 

We will also need the following lemma. Recall that $H(s_{i_1}, \dots, s_{i_\ell})$ can be pictured as the dominant minuscule heap obtained by dropping beads $b_{j}$ onto the runners $i_j$ beginning with $b_\ell$ and proceeding in order until $b_1$ is dropped. Note that at the $j$th step when the bead $b_j$ is dropped, it covers previously-dropped beads $y_1$ (and possibly $y_2$) on runners which neighbour the runner $i_j$. (It covers at most two beads by \Cref{dommin-condition}.)

\begin{definition}
In the setting above, we say $b_j$ is ``in good order" if either $y_2$ doesn't exist, or $y_1$ and $y_2$ both exist but one of them is maximal in the sub order ideal $\{b_{j+1}, \dots, b_\ell\}$.
\end{definition}
\begin{lemma}
\label{lem:goodorder}
Let $H(w)$ be a minuscule heap. Then we can choose a reduced word $ \bw = s_{i_1}, \dots, s_{i_\ell} $, so that all beads are dropped in good order in $ H(s_{i_1}, \dots, s_{i_\ell})$.
\end{lemma}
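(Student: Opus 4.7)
Plan: My plan is to prove this by induction on $\ell = |H(w)|$, with the empty heap as the trivial base case. For the inductive step, the heart of the matter is the following sub-claim: in every non-empty dominant minuscule heap $H$ there exists a maximal element $b$ such that either $b$ covers at most one element of $H$, or $b$ covers exactly two elements $y_1, y_2 \in H$ one of which has $b$ as its only cover. Given such a $b$, the element $w' := s_{\pi(b)} w$ is again dominant minuscule with $H(w') = H(w) \setminus \{b\}$; applying the inductive hypothesis to $w'$ yields a reduced word $\bw' = (s_{i_2}, \dots, s_{i_\ell})$ for $w'$ in which every bead is dropped in good order. Prepending $s_{\pi(b)}$ then produces a reduced word for $w$ in which $b_1 = b$ is in good order by our choice, and each $b_j$ with $j \geq 2$ remains in good order because the set $\{b_{j+1}, \dots, b_\ell\}$ that it sees coincides with the one it saw in $H(\bw')$.

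To establish the sub-claim, I would first observe, using \Cref{thm:lminuscule} together with \Cref{dommin-condition}, that every maximal element of $H$ covers at most two elements: exactly two if its runner contains more than one bead (these are the two non-commuting generators lying strictly between it and the previous bead on the same runner), and at most one otherwise. If some maximal element covers at most one element, simply pick it. Otherwise every maximal element covers exactly two, and I would select $b$ to be a maximal element of maximum rank in the ranked poset $H$ (which is well-defined by \Cref{prop:rank}). For such a $b$ at rank $L$, any other cover $b'$ of an element $y$ covered by $b$ must itself have rank $L$, be maximal, and lie on a runner non-adjacent to $\pi(b)$ but adjacent to $\pi(y)$ (since two maximal elements of the same rank must lie on non-adjacent runners of the Dynkin diagram). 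A case analysis, exploiting the rigid control \Cref{thm:lminuscule} places on the pattern of beads between consecutive occurrences of a single generator, then shows that the two elements $y_1, y_2$ below $b$ cannot both admit such alternate covers, so at least one of them is uniquely covered by $b$.

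The main obstacle is carrying out this final case analysis uniformly across all simply-laced Dynkin types. The branched cases $D_n$ and $E_6, E_7, E_8$ are the most delicate: a runner can have up to three neighbouring runners, so a maximal bead at the branch point could in principle have many potential alternate covers, and one must verify that the hypothetical "bad" configuration (in which both $y_1$ and $y_2$ are covered by further top-rank maximal beads on runners non-adjacent to $\pi(b)$ but adjacent to $\pi(y_1), \pi(y_2)$) is inconsistent with the tight bead-counting condition \Cref{thm:lminuscule} imposes on consecutive occurrences of a generator.
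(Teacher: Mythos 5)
Your overall strategy --- induct on $\ell$ by peeling off a single well-chosen maximal bead --- is a legitimate alternative to the paper's argument (the paper instead drops beads level by level and, within a given level $k$, orders the beads along the paths of the graph whose edges join two level-$k$ beads covering a common level-$(k-1)$ bead). Your reduction of the lemma to the sub-claim is sound. The problem is that the sub-claim is exactly where the content of the lemma lives, and you do not prove it: you defer it to an unspecified type-by-type case analysis. Worse, the selection rule you propose for that analysis is incorrect. In type $E_8$ (Bourbaki labelling, arm $1-3-4-5-6-7-8$ with node $2$ attached to node $4$) consider the word
\[
\bw = (s_3, s_5, s_7,\; s_1, s_4, s_6, s_8,\; s_3, s_5, s_7,\; s_4, s_6,\; s_5),
\]
which one checks (via \Cref{def:minuscule} with witness $\lambda=\omega_5$, or via \Cref{thm:lminuscule} and \Cref{dommin-condition}) is a reduced word for a dominant minuscule element. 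Its heap has exactly three maximal elements, namely the three top beads on runners $3,5,7$; they all have the same (maximal) rank and each covers exactly two beads, so your first branch does not apply and your rule permits choosing the top bead $b$ on runner $5$. But $b$ covers the level-$4$ beads on runners $4$ and $6$, and each of these has a second cover (the top beads on runners $3$ and $7$ respectively). So for this $b$ both $y_1$ and $y_2$ admit alternate covers, contradicting the conclusion you claim the case analysis would establish.

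The existence form of your sub-claim does still hold for this heap --- the top beads on runners $3$ and $7$ each cover a bead on a leaf runner ($1$, resp.\ $8$), which therefore has a unique cover --- but rescuing the argument requires choosing $b$ to be \emph{extremal} among the top-rank maximal beads, i.e.\ an endpoint of a path in the ``shared covered bead'' graph, which is precisely the structural observation on which the paper's proof rests. As written, your proposal is missing this idea, and the case analysis you describe would break at the step quoted above; so there is a genuine gap.
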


\begin{proof}
Recall that we have a rank function on $H(w) $ which we refer to as level (see \cref{prop:rank}). So we will begin by choosing to drop beads in a way such that we drop all the beads on any given level $k$ before moving onto level $k + 1$. It remains now to describe the order in which to drop the beads of a given level $k$.

Now let $B_k$ be the graph whose nodes are all of the beads of level $k$ in the full poset $H(w)$, with edges between two beads $b$ and $b'$ if there exists a bead $b''$ of level $k-1$ in $H(w)$ such that $b$ and $b'$ both cover $b''$. By \Cref{thm:lminuscule}, each bead in $B_k$ can cover at most two beads in $B_{k-1}$, and therefore each vertex in $B_k$ has degree at most two. It is therefore a disjoint union of linear graphs. Note that, by the definition of good order, a bead $b_j$ of level $k$ fails to be dropped in good order if and only if it has two neighbours in $B_k$, both of which have already been dropped before $b_j$ itself. Since $B_k$ is a union of linear graphs, we can drop beads in an order such that this never happens: at each step, drop a bead such that it has exactly one un-dropped neighbour in $B_k$. Clearly this achieves the desired property, by induction.
\end{proof}

In the subsequent discussion, we will consider $H(w)$ as a graph where the nodes are beads and the edges are given by covering relations in the poset.

\begin{proposition}
\label{prop:colouringconditions}
Given a $2$-colouring $c'$ of $I$ as above, for any dominant minuscule heap $H(w)$, there exists a $4$-colouring $c$ of the edges (i.e. covering relations) in $H(w)$ with colours $\{R, B, G, Y\}$ (red, blue, green, and yellow) which satisfies the properties
\begin{enumerate}
\item $c$ is indeed a $4$-colouring, i.e. for any bead $x \in H(w)$, the edges of $H(w)$ which touch $x$ are all of different colours. 
\item If $e$ is an edge which serves as a covering relation between two beads $x \in H(w)_i$ and $y \in H(w)_j$ for $i, j \in I$ adjacent runners, with $x > y$, then $c(e) \in \{R, B\}$ (resp.\ $c(e) \in \{G, Y\}$) if $c'(i) = -$ (resp.\ $c'(i) = +$).
\item Any four edges which form a \emph{diamond} (i.e. a subgraph of $H(w)$ of the form pictured in \Cref{fig:smallerheap}) are of different colours.
\end{enumerate}
\end{proposition}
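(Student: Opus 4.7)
The plan is to construct $c$ by induction, building $H(w)$ one bead at a time in an order guaranteed by \Cref{lem:goodorder}. First I fix a reduced word $\bw = (s_{i_1}, \ldots, s_{i_\ell})$ for $w$ so that every bead is dropped in good order, and then color edges incrementally: at step $j$ (for $j = \ell, \ell-1, \ldots, 1$) I place $b_j$ and color the (at most two) new edges from $b_j$ down to the beads $y_1, y_2$ it covers, maintaining properties (1), (2), (3) on the subheap built so far.

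Before the induction, it pays to make one structural observation. Since $c'$ is a proper 2-coloring of the Dynkin tree, adjacent runners carry opposite colors, so at any heap element $x$ on runner $i$ the down-edges at $x$ (where $x$ is the upper endpoint) lie in the palette of $c'(i)$, while the up-edges at $x$ lie in the opposite palette. Consequently property (1) at $x$ decouples into two independent constraints, one for the down-edges and one for the up-edges, and property (3) becomes automatic: in a diamond with top $b$, middles $y_1, y_2$, bottom $z$, the two top edges lie in one palette and the two bottom edges in the other, while (1) at $b$ and at $z$ force each pair to have distinct colors, giving four distinct colors total.

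The inductive coloring step is then straightforward. In the final heap each bead has at most two up-edges, so when $b_j$ is about to be placed, each of $y_1, y_2$ has at most one already-colored up-edge (since $b_j$ itself will become one of the at most two up-edges of $y_k$). If only $y_1$ exists, I pick for $b_j - y_1$ whichever of the two permitted palette colors differs from the at-most-one prior up-edge at $y_1$. If both $y_1, y_2$ exist, good order guarantees that one of them, say $y_2$, is maximal among previously-placed beads and therefore has no prior up-edges; I then color $b_j - y_1$ to avoid its at-most-one prior up-edge and color $b_j - y_2$ with the remaining palette color, with no conflict at $y_2$ and the two new edges at $b_j$ automatically distinct.

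The main obstacle, and the precise reason \Cref{lem:goodorder} is invoked, is the case where $b_j$ covers two beads each already carrying an up-edge: the palette has only two colors, so the pair of new edges would have to simultaneously avoid two (possibly distinct) prior colors and be distinct at $b_j$, which can be infeasible. Good order rules this out by ensuring that at least one of $y_1, y_2$ is free of prior up-edges, leaving just enough freedom to complete the coloring. After the induction terminates, properties (1) and (2) hold by construction on the whole heap, and (3) follows from the palette-factorization noted above.
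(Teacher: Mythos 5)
Your proof is correct and follows essentially the same route as the paper's: drop beads in the good order supplied by \Cref{lem:goodorder} and colour each new down-edge within the two-colour palette dictated by $c'$, using the maximality of one of the two covered beads to leave enough freedom to avoid clashes. Your observation that Property 3 is automatic from Properties 1 and 2 via the palette split is a clean repackaging of what the paper checks inside the induction, and your treatment of the single-covered-bead case (choosing the colour to avoid the at-most-one prior up-edge at $y_1$) is in fact slightly more careful than the paper's ``arbitrary choice'' there.
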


\begin{figure}[h!]
  \[
    \begin{tikzcd}[column sep = small, row sep = small]
    & & 0 \ar[d] & & & \\
    1 & 2 \ar[r] \ar[l] & 3 & 4 \ar[r] \ar[l] & 5 & 6 \ar[l]
    \end{tikzcd}
    \hspace{1cm} 
    \adjustbox{scale=0.75}{\begin{tikzcd}[column sep = tiny, row sep = tiny]
    & & & & & 6 \ar[dl, blue]\\
    & & & & 5 \ar[dl, Green] & \\
    & & & 4 \ar[dl, blue] & & \\
    & & 3 \ar[dr, Dandelion] \ar[dl, Green] & & & \\
    & 2 \ar[dl, blue]\ar[dr, red] & & 0 \ar[dl, blue] & & \\
    1 \ar[dr, Dandelion] & & 3 \ar[dl, Green] \ar[dr, Dandelion]& & & \\
    & 2\ar[dr, red] & & 4 \ar[dl, blue]\ar[dr, red] & & \\
    & & 3 \ar[dl, Green]\ar[dr, Dandelion] & & 5 \ar[dl, Green]\ar[dr, Dandelion] & \\
    & 0\ar[dr, red] & & 4 \ar[dl, blue]\ar[dr, red] & & 6 \ar[dl, blue]\\
    & & 3 \ar[dl, Green] \ar[dr, Dandelion]& & 5\ar[dl, Green] & \\
    & 2 \ar[dl, blue] \ar[dr, red]& & 4 \ar[dl, blue] & & \\
    1 \ar[dr, Dandelion] & & 3 \ar[dl, Green] \ar[dr, Dandelion]& & & \\
    & 2 \ar[dr, red] & & 0 \ar[dl, blue] & & \\
    & & 3 \ar[dr, Dandelion] & & & \\
    & & & 4 \ar[dr, red] & & \\
    & & & & 5 \ar[dr, Dandelion] & \\
    & & & & & 6 \\
    \end{tikzcd}}
  \]
    \caption{A $4$-colouring of the edges of a heap satisfying the requirements in \Cref{prop:colouringconditions}, with a label on each bead given by the label of the vertex over which it lies.\label{fig:4colouring}}
\end{figure}

\begin{proof}
Choose an ordering of beads such that $H(w)$ is formed by dropping these beads in order, and such that at each time a bead is dropped, it is dropped in good order. We will construct the colouring $c$ inductively; suppose for induction that $b_\ell, \dots, b_{j+1}$ have already been dropped, and that the edges between these beads already satisfy the three properties in the proposition.

When bead $b_j$ is dropped, it covers either one bead $y_1$, forming a new edge $e_1$, or two beads $y_1$ and $y_2$, forming new edges $e_1$ and $e_2$ respectively. Without loss of generality, suppose $c'(i_j) = +$.  In the first case where only $y_1$ exists, we can give $e_1$ an arbitrary choice of colour so long as $c(e_1) \in \{G, Y\}$. In the second case, without loss of generality, by the ``good order" condition, $y_2$ is maximal among the beads $\{b_\ell, \dots, b_{j+1}\}$. If there exists a bead $b_r \in \{b_\ell, \dots, b_{j+1}\}$ which covers $y_1$, then the edge $e$ corresponding to the covering relation $b_r > y_1$ satisfies $c(e) \in \{G, Y\}$, since $c'(i_j) = +$ and $y_1$ lies on a  runner neighbouring $i_j$, with $e$ emanating downward from a bead neighbouring the runner of $y_1$ itself. In this case, colour $e_1$ and $e_2$ opposite colours in $\{G, Y\}$ such that $c(e_2) = c(e)$. If instead no bead among $\{b_\ell, \dots, b_{j+1}\}$ covers $y_1$, then give $e_1$ and $e_2$ any choice of opposite colours among $\{G, Y\}$.

After $b_j$ is dropped and the choices of colouring on $e_1$ (and maybe $e_2$) as described are made, Property 2 is clearly satisfied by construction. Property 3 is clear, since if a new diamond is formed by adding $b_j$, then the edges of this diamond are now coloured by $\{c(e_1), c(e_2)\} = \{G, Y\}$ along with two edges which emanate downward from $y_1$ and $y_2$ and meet at a single bead on runner $i_j$. These edges are coloured by $R$ and $B$ in some order, by the Properties 1 and 2 which hold by our inductive hypothesis. Finally, Property 1 is clear since no edges coloured by $G$ or $Y$ other than the ones considered in the previous paragraph can touch $y_1$ or $y_2$ by the ``good order" property, along with the fact that by \Cref{thm:lminuscule} most two beads can cover a given bead in any dominant minuscule heap.
\end{proof}

\begin{example}
  Pictured in \Cref{fig:4colouring} above is an example of a $4$-colouring of edges satisfying the conditions of the proposition for a heap in type $E_7$. Here, the $2$-colouring of the Dynkin diagram is the unique $2$-colouring with $c'(1) = +$, and the corresponding orientation is pictured on the left. A (non-unique) $4$-colouring $c$ of edges compatible with $c'$ is pictured on the right.
\end{example} 

\subsection{Preprojective algebra modules from heaps}
\label{subsec:preprojalgmodsfromheaps}

In the previous section, we fixed an orientation of the Dynkin diagram.  Now and for the rest of the paper, we will assume that the preprojective algebra is defined with respect to this orientation.

For any minuscule $w \in W$, our goal is to define a module over the preprojective algebra  using the heap $H(w)$.  Recall the map $ \pi : H(w) \rightarrow I $ and its fibres $ H(w)_i := \pi^{-1}(i) $.  
We build out of this data an $I$-graded vector space $ \bC H(w) $ with basis $ H(w) $. 
Choosing a $4$-colouring $c$ of the edges in $H(w)$ satisfying  properties detailed in the previous subsection and specifically \cref{prop:colouringconditions} we now promote this vector space to a graded module for the path algebra of $\overline Q$. 

Fix an arrow $a = i \to j$ in $\overline{E}$ and $ x \in H(w)_i $.  If there exists $ y \in H(w)_j $ such that $ y < x $ is a covering relation $e$ in $ H(w) $, then we  define $ a(x) = \sigma(c(e))y $ where $\sigma : \{R, B, G, Y\} \to \{-1, 1\}$ sends $R$ to $-1$ and $\{B, G, Y\}$ to $1$ ; if no such $ y $ exists then we set $a(x) = 0 $.  (Such a $ y $ is necessarily unique, since $ H(w)_j $ is totally ordered.)

This module can be drawn by picturing $H(w)$ via the beads-and-runners interpretation discussed earlier, where each bead corresponds to a basis vector and an arrow $ a $ maps a bead on runner $i$ to a bead that it touches on runner $ j$.
\begin{example}
Consider the heap $H(s_2, s_3, s_1, s_2)$ in type $A_3$ as in \Cref{example:heap}. 
After choosing the colouring $c$ pictured below on the left, the module structure on $\bC H(w)$ for this choice of colouring can be written as below on the right:
$$
\begin{tikzcd}[column sep = tiny, row sep = tiny]
                & 2\ar[dr, Green]\ar[dl, Dandelion] &               \\
1\ar[dr, blue]  &                                   & 3\ar[dl, red] \\
                & 2                                 & 
\end{tikzcd}
\hspace{2cm}
\begin{tikzcd}
\mathbb{C} \ar[r, "\begin{bmatrix}0\\ 1\end{bmatrix}", bend left] & \mathbb{C}^2 \ar[r, "\begin{bmatrix}1 \quad 0\end{bmatrix}", bend left] \ar[l, "\begin{bmatrix}1 \quad 0\end{bmatrix}", bend left] & \mathbb{C}\ar[l, "\begin{bmatrix}0\\ -1\end{bmatrix}", bend left] 
\end{tikzcd}
$$
where a basis for each $\mathbb{C}H(w)_i$ is chosen so that basis elements correspond to beads on runner $i$ in $H(w)$.
\end{example}

\begin{proposition}\label{thm:dommin-preproj}
If $ w $ is minuscule, then the preprojective relation is satisfied on $ \bC H(w) $ and so  $\bC H(w)$ has the structure of a $\Pi$-module.
\end{proposition}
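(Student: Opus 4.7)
The plan is to verify, for each $i\in I$ and each basis element $x\in H(w)_i\subset \bC H(w)$, the identity $\sum_{a:\,\text{head}(a)=i}\epsilon(a)\,a a^*(x)=0$. The sum runs over arrows $a_k:k\to i$ of $\overline Q$ with $k$ a neighbor of $i$ in the Dynkin diagram. Since $Q$ is oriented so that every edge of $E$ travels from a $(-)$-vertex to a $(+)$-vertex, all such $k$ carry colour $-c'(i)$, so every $\epsilon(a_k)$ coincides (all $+1$ if $c'(i)=+$, all $-1$ if $c'(i)=-$). It therefore suffices to prove $\sum_{k\sim i} a_k a_k^*(x)=0$.

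Unpacking the module structure, $a_k a_k^*(x)$ is nonzero only when $x$ is joined by a covering edge $e_1$ to a bead $y\in H(w)_k$ and $y$ is joined by a covering edge $e_2$ to a bead $z\in H(w)_i$, in which case $a_k a_k^*(x)=\sigma(c(e_1))\sigma(c(e_2))\,z$. The main case is when $x$ has a bead $z$ directly below it on runner $i$. By \Cref{thm:lminuscule}, the open interval $(z,x)$ in the heap contains exactly two beads $y_1,y_2$, both on runners adjacent to $i$; a short reduced-word argument (a repeated $s_j$ between the two $s_i$'s would either be non-reduced or force too few non-commuting generators between the repeated pair, violating the minuscule condition for $s_j$) shows that $y_1,y_2$ sit on distinct neighbors $k_1\ne k_2$. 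Hence $x,y_1,y_2,z$ form a diamond. Moreover, since the beads covered by $x$ correspond precisely to the first occurrences of adjacent generators below the $s_i$ at $x$ and before the $s_i$ at $z$, any neighbor $k\notin\{k_1,k_2\}$ admits no $y\in H(w)_k$ with $y\lessdot x$, so its term vanishes automatically.

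The sum then collapses to $\bigl(\sigma(c(e_{x,y_1}))\sigma(c(e_{y_1,z}))+\sigma(c(e_{x,y_2}))\sigma(c(e_{y_2,z}))\bigr)z$. Property (3) of \Cref{prop:colouringconditions} ensures that the four edges of the diamond carry four distinct colours, so exactly one is red; since $\sigma$ sends red to $-1$ and the other three colours to $+1$, one of the two products equals $-1$ and the other $+1$, and they cancel. The degenerate case in which $x$ has no bead directly below it on runner $i$ is immediate: for every neighbor $k$ and every $y\lessdot x$ on runner $k$, there is no bead on runner $i$ lying below $y$ either, so $a_k(y)=0$ and each summand is zero.

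The main obstacle is the combinatorial step identifying which covering configurations support nonzero contributions: one must extract from the minuscule axiom (\Cref{thm:lminuscule}) both the two-element structure of the interval $(z,x)$ and the fact that neighbors outside $\{k_1,k_2\}$ contribute nothing, so that the preprojective relation reduces cleanly to a statement about a single diamond. Once this bookkeeping is in place, the sign cancellation is an immediate consequence of the colouring properties secured in \Cref{prop:colouringconditions}.
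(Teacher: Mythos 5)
Your overall strategy coincides with the paper's: verify $\sum_{a:\,\mathrm{head}(a)=i}\epsilon(a)aa^*(x)=0$ bead by bead, note that the chosen orientation forces all the relevant $\epsilon(a)$ to agree at a fixed vertex, reduce the main case to a diamond, and let Property~3 of \Cref{prop:colouringconditions} supply the cancelling signs. The diamond case and the case where $x$ is minimal on its runner are handled correctly. The gap is your claim that the two beads $y_1,y_2$ lying between $x$ and the next bead $z$ below it on runner $i$ (and on runners adjacent to $i$) must sit on \emph{distinct} neighbours $k_1\ne k_2$. This is false, and the parenthetical reduced-word argument does not prove it: if both beads lie on runner $j$, the minuscule condition for $s_j$ only demands two generators not commuting with $s_j$ between the two occurrences of $s_j$, and these can perfectly well be supplied by a neighbour of $j$ \emph{other than} $i$. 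A concrete counterexample is the vector representation of $D_4$ with $w=s_1s_2s_3s_4s_2s_1$: runner $1$ carries exactly two beads, and both elements of the open interval between them lying on a runner adjacent to $1$ sit on runner $2$ (the interval is $\{s_2,s_3,s_4,s_2\}$). So $k_1=k_2$, there is no diamond, and your case analysis does not cover the situation.

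The relation still holds in that configuration, but for a reason your proof never supplies. Writing $y_1>y_2$ for the two intermediate beads on the common runner $j$, the arrow $i\to j$ sends $x$ to $\pm y_1$, and $y_1$ does not cover $z$ because $y_2$ lies strictly between them; hence $a_ja_j^*(x)=0$ and the single potentially nonzero term vanishes. This is precisely the second case ($\pi(z_1)=\pi(z_2)$) in the paper's proof of \Cref{thm:dommin-preproj}. To repair your argument, replace the false claim $k_1\ne k_2$ by a case split on whether the two intermediate beads lie on the same runner, and in the equal case argue the vanishing as above.
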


\begin{proof} Recall that \Cref{dommin-condition} gives us explicit conditions on $H(w)$ which are equivalent to $w$ being minuscule.  We must verify that $\bC H(w)$ satisfies the condition for being a module over the preprojective algebra. By \Cref{def:preproj}, we must verify that for any $i \in I$,
\begin{align*}
\sum_{\substack{a \in \overline{E}\\ \mathrm{head}(a) = i}} \epsilon(a) aa^* = 0.
\end{align*}

If $x$ is minimal in $H(w)_i$, then this is clear, since by the definition of the module structure on $\bC H(w)$, we have $aa^*(x) = 0$ for all $a$ such that $ \mathrm{head}(a) = i$. Otherwise, choose a maximal $y \in H(w)_i$ satisfying $y < x$. 
By \Cref{thm:lminuscule}, there are exactly two heap elements $z_1$ and $z_2$ on vertices neighboring $i$ which are in between $y$ and $x$ in the partial order on $H(w)$. These elements are such that either $\pi(z_1) \neq \pi(z_2)$ or $\pi(z_1) = \pi(z_2)$.

In the first case, by the characterization of intervals in heaps between elements over the same vertex provided in \cite{St01}, we know that $x$ covers both $z_1$ and $z_2$, which both cover $y$; this shows that $\{x, z_1, z_2, y\}$ form a diamond. So let $a_1 : \pi(z_1) \to i$ and $a_2 : \pi(z_2) \to i$. By our choice of orientation of $Q$ constructed from the $2$-colouring $c'$, we know that either both $a_1, a_2 \in E$ (in the case where $c'(i) = +$), or both $a_1, a_2 \in E^*$ (if $c'(i) = -$). We will handle the first case, with the second case being handled identically. Since $x, z_1, z_2, y$ form a diamond, if we let $c$ be the colouring which used to construct the $\Pi$-module, then one of the edges between these four heap elements has colour $R$. We will handle the case where the edge from $z_1$ to $x$ has this colour, with the other cases being handled similarly. In this setup, we get
\begin{align*}
\smash{\sum_{\substack{a \in \overline{E}\\ \mathrm{head}(a) = i}} \epsilon(a) aa^*}(x) 
      &= (\pi(z_1) \to i)(i \to \pi(z_2))(x) + (\pi(z_2) \to i)(i \to \pi(z_1))(x)\\
      &= (\pi(z_1) \to i)(z_1) + (\pi(z_2) \to i)(z_2)\\
      &= -x + x\\
      &= 0.
\end{align*}

If instead $\pi(z_1) = \pi(z_2)$, then without loss of generality we can assume that $z_1 > z_2$. This means if $a^*$ is the arrow $i \to \pi(z_1)$, we have $a^*(x) = \pm z_1$. But since $y < z_2 < z_1$ by assumption, $z_1$ does not cover $y$, and so $aa^*(x) = 0$. Further, if $a^*$ is an arrow $i \to j$ where $j \neq \pi(z_1)$, we clearly have $a^*(x) = 0$ to begin with.
\end{proof}

\begin{figure}[ht]
\centering
\begin{tikzcd}[column sep = tiny, row sep = tiny]
              & x \ar[ddl]\ar[ddr] &            \\
                                                \\
z_1 \ar[ddr]  &                    & z_2\ar[ddl]\\
                                                \\
              & y
\end{tikzcd}
\hspace{2cm}
\begin{tikzcd}[column sep = tiny, row sep = tiny]
x \ar[dr] &             & \\
          & z_1 \ar[dr] & \\
          &             & {\mathmakebox[0pt][l]{\smash{\vdots}}\phantom{z_3}} \ar[dl] \\  
          & z_2 \ar[dl] & \\
y         &             & 
\end{tikzcd}
\caption{The first and second cases in the proof of \Cref{thm:dommin-preproj}.}
\end{figure}

\begin{remark}
In Type $A_{m-1}$, an alternative orientation can be chosen for which no sign modifications are needed, making the content of \cref{subsec:orientations} unnecessary. Indeed, in this case we use the orientation $ 1 \to 2 \to \dots \to m-1 $. Then we define the action of the path algebra of $\overline{Q}$ on $\mathbb{C}H(w)$ by setting $a(x) = y$ whenever $a = i \to j \in \overline{E}$ and $x \in H(w)_i, y \in H(w)_j$ and $y < x$ is a covering relation, omitting the $\sigma$ in our original definition of $\mathbb{C}H(w)$. 

One can then check that \Cref{thm:dommin-preproj} holds with this definition, essentially because every sub $A_3$ Dynkin diagram is linearly oriented with orientation. However, in other types, there is a trivalent vertex and so there is no orientation which linearly orients every sub $A_3$.  That is why we needed a careful choice of signs as in our original definition of the $\Pi$-module $\mathbb{C}H(w)$.
\end{remark}

When $ w $ is dominant minuscule, then this module gives us a concrete realization of some of the modules defined abstractly in the last section.

\begin{lemma}\label{thm:projcover}
Let $w \in W $ be dominant minuscule and $ \lambda $ be a witness for $ w $.  Then $ \bC H(w) \cong T(\lambda, w) $.	In particular, if $ \lambda $ is minuscule and $ J = \left\{ j : s_j \lambda = \lambda \right\} $, then $ \bC H(w^J_0) = T(\lambda)$.
\end{lemma}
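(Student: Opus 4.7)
The plan is to verify the two defining conditions of $T(\lambda, w)$ stated in the excerpt---the correct dimension vector $\bv$ with $\lambda - w\lambda = \sum v_j\alpha_j$, and the socle containment $\soc \subseteq \bigoplus_i S(i)^{\oplus n_i}$ for $\lambda = \sum n_i\omega_i$---after which uniqueness of $T(\lambda,w)$ gives the isomorphism. The dimension-vector check is immediate: $\dim \bC H(w)_j = |H(w)_j|$ equals the number of occurrences of $s_j$ in any reduced word for $w$, and the $\lambda$-minuscule condition (\Cref{def:minuscule}) telescopes to $\lambda - w\lambda = \sum_j |H(w)_j|\,\alpha_j$, matching the required $\bv$.

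The main work is the socle calculation. I would first argue that $\soc \bC H(w)$ is spanned exactly by the basis vectors $x \in H(w)$ that are minimal in the heap partial order. For an arrow $a\colon i\to j$ in $\overline{Q}$ and $x \in H(w)_i$, the definition of $\bC H(w)$ gives $a(x) = c\cdot y$ for a nonzero scalar $c$ when $x$ covers some (necessarily unique) $y \in H(w)_j$, and $a(x)=0$ otherwise. Because $H(w)_j$ is totally ordered (\Cref{rmk:runners}), two distinct elements $x_1 < x_2$ of $H(w)_i$ cannot cover the same $y$---since $x_1$ would lie strictly between $y$ and $x_2$, contradicting that $x_2$ covers $y$---so the nonzero vectors among $\{a(x) : x \in H(w)_i\}$ are linearly independent. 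Thus $\sum c_x x \in \ker a$ forces $c_x = 0$ whenever $a(x) \ne 0$, and intersecting over all arrows $a$ incident to $i$ shows $(\soc \bC H(w))_i$ has basis equal to the set of minimal elements of $H(w)$ on runner $i$.

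Next I would count these minimal elements. Between two consecutive beads on a runner there are two beads on adjacent runners (\Cref{thm:lminuscule}), so only the bottom bead of runner $i$ has any chance of being minimal in $H(w)$; that bottom bead is minimal precisely when it rests on the floor of the abacus, equivalently when some reduced word for $w$ ends in $s_i$, equivalently $ws_i < w$. Combined with \Cref{thm:witness}, which writes any dominant witness as $\lambda = \lambda_{min} + \eta$ with $\lambda_{min} = \sum_{ws_j<w}\omega_j$ and $\eta \in P_+$, I obtain $n_i = \langle \alpha_i^\vee, \lambda\rangle \ge \langle \alpha_i^\vee, \lambda_{min}\rangle = \dim (\soc \bC H(w))_i$, yielding the required containment.

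For the ``in particular'' statement, take $w = w_0^J$, which is $\lambda$-minuscule with witness $\lambda$ (by the theorem cited before \Cref{prop:WJcrystal}); writing $w_0 = w_0^J\, w_J$ with $w_J$ the longest element of $W_J$, for each $i \notin J$ the fact that $W_J$ fixes $\omega_i$ yields $w_0\omega_i = w_0^J\omega_i$, so $T(i, w_0^J) = T(i)$ for all $i$ with $n_i > 0$ and hence $T(\lambda, w_0^J) = T(\lambda)$. The main obstacle I foresee is the socle identification: one must carefully rule out that clever linear combinations of non-minimal basis vectors can lie in the socle, which hinges crucially on the totality of each runner ensuring injectivity of the arrow maps on the span of basis vectors that map nontrivially.
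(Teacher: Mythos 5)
Your proof is correct and follows essentially the same route as the paper's: check the dimension vector via the telescoping $\lambda$-minuscule condition, identify $\soc \bC H(w)$ with the span of the minimal heap elements, match these with $\{i : ws_i < w\}$, and invoke \Cref{thm:witness} together with the uniqueness characterization of $T(\lambda,w)$. The only difference is that you spell out details the paper leaves implicit (the linear-independence argument showing no nontrivial combination of non-minimal beads lies in the socle, and the verification that $T(\lambda, w_0^J) = T(\lambda)$), both of which are correct.
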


\begin{proof}
Let $ \bv = \dimvec \bC H(w)$.  Choose a reduced word $ w = (s_{i_1}, \dots, s_{i_\ell} )$.  Then, by the construction of $ H(w)$, we see that $ \sum v_i \alpha_i = \sum_k \alpha_{i_k} $.  Since $ \lambda $ is a witness for $ w $, we see that $ \sum_k \alpha_{i_k} = \lambda - w \lambda $.  So $ \bC H(w) $ has the right dimension vector.

The socle of $ \bC H(w) $ is spanned by the minimal elements of the heap $ H(w) $.  By the construction of the heap, these minimal elements are in bijection with  $\{ i\in I : ws_i < w \} $ (as these are the possible rightmost generators in a reduced word for $ w $).  By \Cref{thm:witness} and the characterization of $ T(\lambda, w)$, the result follows.
\end{proof}
\begin{example}
In type $D_4$, let $w = s_2s_1s_3s_4s_2$. 
In this case, the module $\bC H(w)$ can be depicted as
\[
    \bC H(w)=\begin{tikzcd}[column sep = tiny, row sep = tiny]
    & 2 \ar[ddl]\ar[dd]\ar[ddr] &\\
    \\
    1 \ar[ddr] & 3 \ar[dd] & 4\ar[ddl]\\
    \\
    & 2
    \end{tikzcd}
\]
where each label denotes a basis element of the vector space supported at the corresponding numbered vertex, and the arrows indicate the actions on these basis elements by the path algebra of the $D_4$ doubled quiver.

Note that in this case, the criterion in \Cref{thm:lminuscule} tells us that $w$ is not minuscule.  There is no way to make $ \bC H(w)$ into a $ \Pi$-module where the arrows of $ \overline E $ act by the covering relations in the heap, up to sign.  (It is possible to make $ \bC H(w) $ into a $ \Pi$-module if we use other scalars.)
\end{example}

\begin{example}
  In type $D_5$, let $w = s_5s_3s_2s_4s_1s_3s_2s_5s_3s_4$. For one choice of colouring, the module $\bC H(w)$ can be depicted as follows.
  \[
    \bC H(w)=\begin{tikzcd}[column sep = small, row sep = small]
      & & & 4 \ar[dl, "+"]\\
      & & 3 \ar[dl, "-"] \ar[dr, "+"]& \\
      & 2 \ar[dl, "+"] \ar[dr, "+"]& & 5 \ar[dl, "+"]\\
      1 \ar[dr, "+"] & & 3 \ar[dl, "-"] \ar[dr, "+"]& \\
      & 2 \ar[dr, "+"] & & 4 \ar[dl, "+"]\\
      & & 3 \ar[dr, "+"] & \\
      & & & 5\\
    \end{tikzcd}
  \]
  In this case we can easily verify using \Cref{dommin-condition} that $w$ is dominant minuscule, and also that this module is a $\Pi$-module, as is guaranteed by \Cref{thm:dommin-preproj}. Finally, note that this is also an example where \Cref{thm:projcover} holds, as we can verify that $\bC H(w)$ is isomorphic to the injective hull of $S(5)$ while $w = w_0^J$ for $J = \{1, 2, 3, 4 \}$.
\end{example}

An important property of $ \bC H(w) $ is that it has finitely many submodules.  In fact, we have the following result.
\begin{proposition} \label{th:GrHw}
    Let $ \phi \subset H(w)$ be an order ideal. Then $ \bC \phi $ is a submodule of $ \bC H(w) $ and $\phi\mapsto \bC\phi$ defines a bijection $ J(H(w)) \rightarrow \Irr \Gr (\bC H(w))$.
\end{proposition}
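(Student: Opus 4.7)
My plan is to prove the proposition in three steps. First, I would verify the easy direction: for any order ideal $\phi \in J(H(w))$, the subspace $\bC\phi \subset \bC H(w)$ is a $\Pi$-submodule. By the construction of the module in \Cref{subsec:preprojalgmodsfromheaps}, each arrow $a$ sends a basis vector $x$ to $\pm y$ for some $y < x$ covered by $x$ in the heap, and the order-ideal property forces $y \in \phi$ whenever $x \in \phi$. The assignment $\phi \mapsto \bC\phi$ is clearly injective, since $\phi$ is recovered from $\bC\phi$ as its set of basis vectors.

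The main step is to show that every $\Pi$-submodule $M \subset \bC H(w)$ equals $\bC\phi$ for some $\phi \in J(H(w))$. I plan to proceed by induction using the socle filtration, together with the ranked poset structure of $H(w)$ from \Cref{prop:rank}. The socle of $\bC H(w)$ is spanned by the minimal elements of $H(w)$, and because each runner $H(w)_i$ is totally ordered with level strictly increasing along runners, at most one minimal bead lies on each runner. Hence $\soc(\bC H(w)) \cong \bigoplus_i S(i)$ is a direct sum of \emph{pairwise distinct} simples, so every $\Pi$-subspace of the socle is automatically spanned by a subset of minimal beads. The same reasoning applies layer by layer: setting $\Phi_k := \{x \in H(w) : \mathrm{level}(x) \leq k\}$, the socle of $\bC H(w)/\bC\Phi_k$ is likewise spanned by the level-$(k+1)$ beads, again at most one per runner. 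Inductively assuming $M \cap \bC\Phi_k = \bC\phi_k$ for an order ideal $\phi_k$, I would lift each new level-$(k+1)$ bead $y$ in the image of $M$ to some $v_y = y + u_y \in M$, with $u_y \in \bC\Phi_k$ automatically supported on runner $\pi(y)$, and exploit that $a(v_y) \in M \cap \bC\Phi_k = \bC\phi_k$ for every arrow $a$ out of $\pi(y)$. Comparing the leading coefficient on the target runner forces the bead covered by $y$ to lie in $\phi_k$, yielding the order-ideal condition for $\phi_{k+1} := \phi_k \cup \psi_{k+1}$.

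The hardest step will be ruling out contamination: showing that $u_y$ has no nonzero coefficient on any bead $z \in \Phi_k \setminus \phi_k$ on runner $\pi(y)$. My plan is to argue that any such $z$ would, via the arrows, force $z$'s covered beads on neighboring runners to lie in $\phi_k$; iterating this chain of implications, together with the tight combinatorial control from \Cref{thm:lminuscule} (exactly two beads between consecutive occurrences on a runner, at most one per neighboring runner), should yield a contradiction with $z \notin \phi_k$. Once this is established, $\Gr(\bC H(w))$ is a finite discrete set of $|J(H(w))|$ points, hence zero-dimensional, and its irreducible components are precisely these isolated points, giving the desired bijection $J(H(w)) \to \Irr \Gr(\bC H(w))$.
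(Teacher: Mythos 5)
The paper states this proposition without proof, so I am judging your plan on its own terms. The overall architecture is sound: the easy direction and injectivity are fine, the level filtration $\bC\Phi_k$ is a filtration by coordinate submodules whose graded pieces meet each runner at most once, and your leading-coefficient argument (that $a(v_y)\in M\cap\bC\Phi_k=\bC\phi_k$, with the contribution of $a(u_y)$ landing strictly below the bead covered by $y$ on the target runner) correctly establishes that every bead covered by $y$ lies in $\phi_k$.

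The gap is in the contamination step, and it is not a matter of missing detail: the mechanism you propose cannot produce a contradiction. Showing that the beads \emph{covered by} $z$ (and, iterating, everything below $z$) lie in $\phi_k$ is perfectly consistent with $z\notin\phi_k$, since an order ideal may contain the entire down-set of $z$ without containing $z$; moreover every arrow strictly decreases level, so no chain of implications propagated downward from $z$ can ever return to $z$ itself. The contradiction must come from \emph{above}: you need a bead $z'$ \emph{covering} the relevant bead of runner $i=\pi(y)$ with $z'\in\phi_k$, so that the arrow $\pi(z')\to i$ applied to $z'\in\bC\phi_k\subset M$ places that bead in $M$, hence in $\phi_k$. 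Concretely, let $y'$ be the bead immediately below $y$ on runner $i$ (if none exists there is nothing to prove). By \Cref{thm:lminuscule} and the two cases analysed in the proof of \Cref{thm:dommin-preproj}, the open interval $(y',y)$ consists of exactly two beads: either both are covered by $y$ and cover $y'$ (the diamond case), or they lie on a common runner with the upper one $z_1$ covered by $y$ and the lower one $z_2$ covering $y'$, $z_2<z_1$. Your step above gives $z_1\in\phi_k$, and in the second case $z_2\in\phi_k$ follows since $\phi_k$ is an order ideal; either way some bead of $\phi_k$ covers $y'$, so $y'\in M\cap\bC\Phi_k=\bC\phi_k$. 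Since $\phi_k$ meets runner $i$ in a down-set, \emph{all} beads of runner $i$ below $y$ lie in $\phi_k$, so there is no room for contamination: $u_y\in\bC\phi_k$ and $y\in M$. (Alternatively one can transport the leading term $y$ down to the position of the offending bead using the downward-shift elements $\tilde A_i\in\Pi$ of \cref{sec:nilp}.) With this replacement your induction closes.
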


\subsection{Nilpotent endomorphisms}\label{sec:nilp}
Fix a dominant minuscule $ w $ with witness $ \lambda $ and $ n \in \bN $. Combining \Cref{thm:projcover}, \Cref{thm:DemCrystalGeometric}, and \Cref{thm:gravsort}, we see that $ \Irr Gr(\bC H(w)^{\oplus n}) $ and $ \Rpp(w,n) $ are both models for the Demazure crystal $ B_w(n\lambda)$.  We will now build the bijection between these models.

We define, for each $i \in I$, a nilpotent linear endomorphism $\tilde{A}_i$ of $\bC H(w)_i$. We note in advance that together, the collection $\{\tilde{A}_i\}_{i\in I}$ will not define an endomorphism of $\bC H(w)$ as a $\Pi$-module. By \Cref{rmk:runners}, each $H(w)_i$ is totally ordered so we can enumerate $H(w)_i =\{x_i^1 < \dots < x_i^q\}$.
For each $i\in I$, let $\tilde{A}_i : \bC H(w)_i \to \bC H(w)_i$ to be the downward shift operator, defined on this basis by 
$$\tilde{A}_i(x_i^s) = x_i^{s-1} \text{ for } 1 < s \leq q, \text{ and } \  \tilde{A}_i(x_i^1) = 0\,. $$

Since $\tilde{A}_i$ is a linear endomorphism of $\bC H(w)_i$, it extends in the natural way to a linear endomorphism $\tilde{A}_i$ of $\bC H(w)_i^{\oplus n}$. We now show that in Types $A$ and $D$, $\tilde{A}_i$ always coincides with the action of some algebra element $A_i \in \Pi$. This will imply that if $M$ is a submodule of $\bC H(w)^{\oplus n}$, then $M_i$ is invariant under the action of $ \tilde{A}_i $ (because it is invariant under $ \tilde A_i \in \Pi $).

\begin{lemma}\label{lem:agreeswithpi}
  In Types $A$ and $D$, there exists $A_i \in \Pi$ such that $A_i = \tilde{A}_i$ as linear endomorphisms of $\mathbb{C}H(w)_i$.
\end{lemma}

\begin{proof}
  We check this case-by-case for the three classes of posets in Figure \ref{fig:typedposets}. In Class 1, the composition of an arrow from vertex $i$ to $i \pm 1$ and back again will always work (one of these will give $\tilde{A}_i$ and the other $-\tilde{A}_i$). Similarly, this same choice works for all vertices aside from $n-1$ and $n$ in Class 2 (for Type $D_n$ with $n \geq 4$). For vertex $n - 1$ in Class $2$, one can check that the two paths
  \begin{align*}
    n-1 \to n - 2 \to n - 3 \to n - 2 \to n - 1,\\
    n-1 \to n - 2 \to n \to n - 2 \to n - 1
  \end{align*}
  correspond to the operators $\tilde{A}_i$ and $-\tilde{A}_i$ in some order. The same idea works for vertex $n$. Finally, in Class 4, the two paths
  \begin{align*}
    i \to i + 1 \to \dots \to n - 2 \to n - 1 \to n - 2 \to \dots \to i + 1 \to i,\\
    i \to i + 1 \to \dots \to n - 2 \to n \to n - 2 \to \dots \to i + 1 \to i
  \end{align*}
  also yield $\tilde{A}_i$ and $-\tilde{A}_i$ in some order, and so in each of these cases we choose the one which yields $\tilde{A}_i$.
\end{proof}

\begin{figure}
  \caption{An example for $n = 5$ of Class 1 (left) for Type $A_n$ and  Classes 2 (middle) and 4 (right) for Type $D_n$ of the posets described in Section 7 of \cite{P992}\label{fig:typedposets}; these are the only classes which can arise outside of Type $E$.}
  \[\begin{tikzcd}[column sep = small, row sep = small]
    & & 3 \ar[dl] \ar[dr]& & \\
    & 2 \ar[dl] \ar[dr]& & 4 \ar[dl] \ar[dr] & \\
    1 \ar[dr] & & 3 \ar[dl] \ar[dr]& & 5 \ar[dl]\\
    & 2 \ar[dr] & & 4 \ar[dl]& \\
    & & 3 & & \\
  \end{tikzcd}
  \hspace{1.5cm} \begin{tikzcd}[column sep = small, row sep = small]
    & & & 4 \ar[dl]\\
    & & 3 \ar[dl] \ar[dr]& \\
    & 2 \ar[dl] \ar[dr]& & 5 \ar[dl]\\
    1 \ar[dr] & & 3 \ar[dl] \ar[dr]& \\
    & 2 \ar[dr] & & 4 \ar[dl]\\
    & & 3 \ar[dr] & \\
    & & & 5\\
  \end{tikzcd} \hspace{1.5cm} \begin{tikzcd}[column sep = small, row sep = small]
    & 1 \ar[dr] & & & \\
    & & 2 \ar[dr] & & \\
    & & & 3 \ar[dl] \ar[dr] & \\
    & & 4 \ar[dr] & & 5 \ar[dl]\\
    & & & 3 \ar[dl] & \\
    & & 2 \ar[dl] & & \\
    & 1 & & & 
  \end{tikzcd}\]
\end{figure}

\begin{remark}
  In Type E, it may not always be true that $\tilde{A}_i$, as defined above, agrees with the action of an element of $\Pi$ as shown in Lemma \ref{lem:agreeswithpi}. We believe, however, that Lemma \ref{lem:agreeswithpi} is true ``up to sign," i.e. that there exists $A_i \in \Pi$ such that for any $k$ with $1 \leq k \leq r_i-1$, $A_i(x_i^s) = \pm x_i^{s-1}$. We believe that this can be checked manually for the posets in Classes 3 through 15 of \cite{P992}. 
  
  However, the only properties of $A_i$ which are relevant for the arguments in this paper are that $A_i \in \Pi$ and $\ker A_i^s = span (x_i^1, \dots, x_i^s)$ for $1 < s \leq q$. These properties are just as well satisfied by a generic linear combination of paths from the vertex $i$ to itself. To construct such a linear combination, note that since the posets we consider are finite, for any $i$, there exists some number $R_i$ such that the image in $\Pi$ of any path in $\overline{Q}$ of length greater than $R_i$ sends $x_i^s$ to zero for all $1 \leq s \leq q$. The image in $\Pi$ of a generic linear combination of paths of length at most $R_i$ in $\overline{Q}$ from $i$ to $i$ will represent an element $A_i \in \Pi$ satisfying $\ker A_i^s = span (x_i^1, \dots, x_i^s)$ for $1 < s \leq q$. This follows from the fact that (for any of the heaps we consider), for any $1 < s \leq q$, there always exists at least one path from $x_i^s$ to $x_i^{s-1}$. So a generic linear combination will take each $x_i^s$ to some nonzero multiple of $x_i^{s-1}$ plus some linear combination of $x_i^j$ for $j < s$.

So in Type $E$, rather than using the more direct combinatorial definition of the endomorphism $A_i$ as a ``downward-shift operator up to sign," we instead make such a generic choice of $A_i \in \Pi$ for each $i$, and use this as our definition. (One can just as easily take this as the definition in all types and disregard Lemma \ref{lem:agreeswithpi} altogether, but we include it to show that a more direct combinatorial description is often possible.)
\end{remark}

We define a function $ \Phi_M : H(w) \rightarrow \{0, \dots, n\} $ which records the Jordan forms of each $ A_i\big|_{M_i} $ as follows.
$$
  \Phi_M(x_i^s) := \dim (\ker A_i^s \cap M_i) - \dim (\ker A_i^{s-1} \cap M_i)
$$
With this definition, the values $\Phi_M(x_i^1), \dots, \Phi_M(x_i^q) $ represent the dual partition to the partition made up of the sizes of the Jordan blocks of $  A_i\big|_{M_i} $.

Let $ Z \in \Irr \Gr(\bC H(w)^{\oplus n}) $. Since $ M \mapsto \Phi_M $ is a constructible function, it is constant on a dense constructible subset of $ Z $. We can therefore define $ \Phi_Z := \Phi_M $, for $ M $ a general point of this subset. We can now state precisely our main result, which we will prove in \cref{sec:bijection}.
\begin{theorem} \label{thm:mainprecise}
For each $ Z \in \Irr \Gr(\bC H(w)^{\oplus n})$, $ \Phi_Z $ is a reverse plane partition.  
The map $Z \mapsto \Phi_Z$ defines a crystal isomorphism $ \Irr \Gr(\bC H(w)^{\oplus n}) \cong \Rpp(w, n) $.
\end{theorem}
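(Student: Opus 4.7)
The plan is to factor the map $Z \mapsto \Phi_Z$ through the crystal embedding $B_w(n\lambda) \hookrightarrow B_w(\lambda)^{\otimes n}$, so that agreement with the embedding $\Rpp(w,n) \hookrightarrow B_w(\lambda)^{\otimes n}$ provided by \Cref{thm:gravsort} forces both realizations to coincide. The starting point is the natural filtration $M^{\leq k} := M \cap \bC H(w)^{\oplus k}$ for $0 \le k \le n$, whose subquotients $M^k := M^{\leq k}/M^{\leq k-1}$ are submodules of $\bC H(w)$. By \Cref{th:GrHw}, each $M^k$ is $\bC\phi_k$ for a unique order ideal $\phi_k \in J(H(w))$. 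Passing to a generic point of an irreducible component $Z$ yields a constructible tuple $(\phi_1^Z, \dots, \phi_n^Z)$, and hence a map $\Irr \Gr(\bC H(w)^{\oplus n}) \to J(H(w))^n$.

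The next step, and the main geometric input, is to identify this subquotient map with the crystal embedding. The filtration by $M^{\leq k}$ is exactly the geometric incarnation of the tensor product rule: via the Savage--Tingley isomorphism identifying $\Gr(\bC H(w)^{\oplus n})$ with the core of a Nakajima quiver variety, the subquotient stratification matches the stratification arising from Nakajima's tensor product quiver varieties, which is known to realize the crystal-theoretic embedding $B_w(n\lambda) \hookrightarrow B_w(\lambda)^{\otimes n}$. Combining this with \Cref{thm:DemCrystalGeometric} and \Cref{thm:gravsort} shows that $Z \mapsto (\phi_1^Z, \dots, \phi_n^Z)$ lands in the image of $\Rpp(w,n)$ inside $J(H(w))^n$ under \Cref{lem:gravsort}, and moreover realizes the unique crystal embedding of $B_w(n\lambda)$.

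The main obstacle, and the crux of the proof, is to show that for a generic $M \in Z$ the Jordan types of the operators $A_i\big|_{M_i}$ are determined by the subquotient tuple in precisely the way that recovers the RPP associated to $(\phi_1^Z, \dots, \phi_n^Z)$ via \Cref{lem:gravsort}. The difficulty is that $M$ is not a direct sum of its subquotients, so a priori the Jordan structure of $A_i\big|_{M_i}$ is sensitive to the particular extension. The plan is to show that generic extensions are as generic as possible, yielding an explicit formula for $\dim(\ker A_i^s \cap M_i)$ in terms of the cardinalities $|\phi_k \cap H(w)_i|$ and of the interaction with neighbouring runners. Establishing this formula is where Stembridge's structural results on minuscule heaps enter: \Cref{thm:lminuscule} constrains how the totally ordered runner $H(w)_i$ is sandwiched between its two non-commuting neighbours, and a short case analysis (depending on whether those two neighbours come from one or two distinct adjacent runners) controls which chains of beads in the subquotient data must or must not be linked in a generic extension. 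Once this formula is in hand, comparison with the RPP built from the tuple via \cref{eq:gravsort} shows $\Phi_Z \in \Rpp(w,n)$ and that $Z \mapsto \Phi_Z$ is precisely the induced map between the two realizations of $B_w(n\lambda)$, hence a crystal isomorphism by uniqueness.
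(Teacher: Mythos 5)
Your overall architecture agrees with the paper's: both use Nakajima's tensor product varieties (via Savage--Tingley) to identify irreducible components with increasing chains of order ideals $(\phi_1,\dots,\phi_n)$ read off from the subquotients $M^k = M^{\le k}/M^{\le k-1}$, and both then reduce the theorem to showing that the Jordan data $\Phi_M$ recovers the RPP attached to this chain. The first half of your proposal is therefore sound and matches \Cref{thm:componentsDem} and \Cref{thm:RPPtoIrr}. The gap is in the crux, which you both underestimate and frame in a way that points in the wrong direction. What must be proved is the dimension identity $\dim(M_i \cap \ker A_i^s) = \sum_{k}\dim(M_i^k \cap \ker A_i^s)$ (\Cref{th:MMk}), and this is a \emph{rigidity} statement: it holds for \emph{every} module whose subquotients form an increasing chain, not just for a generic extension, and it asserts that the kernels of $A_i^s$ are as \emph{large} as possible given the subquotient data (i.e., the Jordan type is the most degenerate one compatible with the filtration). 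Your slogan that ``generic extensions are as generic as possible'' suggests the opposite: the general inequality for a filtered endomorphism gives $\dim\ker(A_i^s|_{M_i}) \le \sum_k \dim\ker(A_i^s|_{M_i^k})$, and upper semicontinuity of kernel dimension at a generic point of $Z$ only reproduces this same wrong-way bound. What is actually needed is a lower bound on the kernel dimension, valid on the locus of increasing subquotients (see \Cref{ex:jfilt2} for how it fails off that locus), and your proposal contains no mechanism for producing it.

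Moreover, the ``short case analysis'' you propose --- a local examination of how the runner $H(w)_i$ meets its one or two non-commuting neighbours via \Cref{thm:lminuscule} --- cannot suffice. The paper's proof of this step (conditions $C1$ and $C2$, \Cref{lem:c1c2} and \Cref{thm:JC}) is a global induction: one first locates an ``attractive'' vertex $i_*$ (in type $E$, the trivalent node) at which the non-degeneracy statement can be verified directly, and then propagates it to an arbitrary runner $i$ by induction on the distance $d(i,i_*)$ in the Dynkin diagram, transporting vectors along compositions of arrows between runners. This requires the slant-sum decomposition and Proctor's classification of slant-irreducible dominant minuscule heaps, and even then the type $E$ base cases are a substantial case-by-case linear-algebraic computation. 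A purely local analysis at runner $i$ has no access to the information that makes the induction close, so as written your plan for the key formula would not go through.
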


\subsection{Comparison with components of Springer fibres} \label{se:Maffei}

It is instructive to compare \Cref{thm:mainprecise} with \Cref{th:classic}.  To do this, we will first need to recall an isomorphism of \cite{maffei2005quiver}, comparing the Springer fibre with the core of a Nakajima quiver variety.  

Fix $ \fg = \mathfrak{sl}_m $ and $ \lambda = \omega_p $. Let $ N = np $ and let $ A $ be a nilpotent operator on $ \bC^N $ with $ n $ Jordan blocks each of size $ p$.  Let $ F(A) $ denote the $m$-step Springer fibre for $ A $.  

On other hand, let $ w = w_0^J \in S_m $ (where $ J = \{1, \dots, m-1\} \setminus \{p\}$). For simplicity, we will assume that $ p \le m/2 $. The heap  $H(w) $ has a rectangular shape (as described in \cref{se:tableaux}) and $ H(w)_p$ has size $ p $.

Let $ L \in \Pi $ be the sum of all the left going arrows and $ R \in \Pi $ be the sum of all the right going arrows.  Note that in $ \bC H(w)$, $L : \bC H(w)_i \rightarrow \bC H(w)_{i-1} $ is injective for $ i > p $ and surjective for $ i \le p $.  Given $ M \in \Gr( \bC H(w)^{\oplus n}) $, we can construct an $m$-step flag inside of $ \bC H(w)^{\oplus n}_p$ by setting $ V_i = L^{m-p-i}(M_{m-i}) \subset \bC H(w)^{\oplus n}_p $, for $ i = 1, \dots, m-1$. 
If we identify $ \bC H(w)^{\oplus n}_p = \bC^N $ then $ A $ is given by $ A_p $ as defined in \cref{sec:nilp}.

Recall that in \cref{subsec:ytab}, we defined a semistandard Young tableau $ \Psi_V $ of shape $ (n^p) $ for any point $ V \in F(A) $.

\begin{theorem}\label{thm:flagrpp}
	\begin{enumerate}
	\item The map $ M \mapsto ( V_i)_{i = 1}^{m-1} $ defines an isomorphism $ \Gr ( \bC H(w)^{\oplus n}) \rightarrow F(A) $.  
	\item For each $ M \in \Gr ( \bC H(w)^{\oplus n}) $, the RPP $ \Phi_M $ agrees under the bijection from \cref{se:tableaux} with the tableau $\Psi_V $, up to applying the Sch\"utzenberger involution (see \Cref{rem:SI}).
	\end{enumerate}
\end{theorem}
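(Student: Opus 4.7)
\bigskip

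\noindent\textbf{Proof proposal.}

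The plan is to treat the two parts separately: (1) produce an inverse to the proposed map and verify bijectivity, and (2) track Jordan types across the map and match them to the GT bijection of \Cref{se:tableaux}.

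For part (1), I would first check well-definedness. Since $M$ is a $\Pi$-submodule of $\bC H(w)^{\oplus n}$, the components $M_j \subseteq \bC H(w)_j^{\oplus n}$ are preserved by the arrow maps, and in particular by $L$ and $R$. Hence each $V_i = L^{m-p-i}(M_{m-i})$ is a genuine subspace of $\bC H(w)_p^{\oplus n} = \bC^N$, and the inclusions $V_{i-1} \subseteq V_i$ follow from $L(M_{m-i+1}) \subseteq M_{m-i}$ (for indices where $L^{m-p-i}$ is literal composition) and from taking iterated preimages on the other side. The condition $AV_i \subseteq V_{i-1}$ reduces to showing that the operator $A = A_p$ on $\bC H(w)_p^{\oplus n}$ can be written as a product of arrows in $\Pi$ that descends one extra step of $L$; this is exactly the statement, proved in \cref{sec:nilp}, that each $A_j$ agrees with the action of some $\tilde A_j \in \Pi$, together with the preprojective relation at vertex $p$. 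Bijectivity can then be established either by constructing an explicit inverse (given a flag $V$, set $M_j := L^{j-p}(V_{m-j})$ for $j \le p$ and a preimage construction for $j>p$, using that $L$ is respectively injective and surjective as stated in the excerpt), or, more conceptually, by invoking Maffei's isomorphism \cite{maffei2005quiver} combined with the identification of $\Gr(\bC H(w)^{\oplus n})$ with the core of the corresponding Nakajima quiver variety via Savage--Tingley (\Cref{thm:DemCrystalGeometric}); any two such constructions of the same algebraic isomorphism agree after fixing normalizations.

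For part (2), the central computation is that, for each $i \le m-p$, the map $L^{m-p-i} : \bC H(w)_{m-i}^{\oplus n} \to \bC H(w)_p^{\oplus n}$ is injective and intertwines $A_{m-i}$ with $A_p$. This follows from the rectangular structure of $H(w)$: each ``south-west'' arrow $L$ on the heap commutes with the downward-shift operator on the totally ordered runner it lands on (up to the signs fixed in \cref{subsec:orientations}, which cancel in the restriction to any single Jordan block). Consequently
\[
\dim(\ker A^s \cap V_i) \;=\; \dim(\ker A_{m-i}^s \cap M_{m-i})
\quad\text{for all } s\ge 0,
\]
so the transpose partition $\lambda^{(i)}$ recorded by $\Psi_V$ coincides with the transpose partition of $A_{m-i}\big|_{M_{m-i}}$, that is, with the list of values of $\Phi_M$ along the runner $H(w)_{m-i}$, read in the order given by its total order. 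For $i > m-p$ the analogous identification uses instead the surjectivity of $L$ and the iterated preimage description of $V_i$; here one checks that the Jordan invariants transfer through $L$ by a rank--nullity argument on each runner.

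Having identified which runner of $H(w)$ carries $\lambda^{(i)}$, one compares with the bijection $\tau \mapsto \Phi(\tau)$ of \Cref{thm:gtrpp}. That bijection arranges the entries $\lambda^{(i)}_k$ of the GT pattern on $H(w)$ after a reflection and a $90^\circ$ rotation of the triangular array; the reflection has the effect of replacing runner $i$ by runner $m-i$, which is precisely the discrepancy that appeared in the previous paragraph. By \Cref{rem:SI}, this reflection is exactly the Sch\"utzenberger involution on $SSYT(n^p)$, so $\Phi_M$ matches $\Psi_V$ up to Sch\"utzenberger, as claimed.

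The routine parts are checking $AV_i\subseteq V_{i-1}$ and running through rank--nullity on each runner. The main obstacle is the intertwining claim $L\circ A_{j} = A_{j-1}\circ L$ with correct signs: it is true because in a rectangular heap two same-runner beads and two intermediate beads on adjacent runners form a ``diamond,'' and by \Cref{prop:colouringconditions}(3) the four colours on such a diamond are distinct so the signs around each diamond multiply to $+1$; one then has to verify that this compatibility is preserved under iteration $L^{m-p-i}$, and that the indexing on runners matches the rotation used in \Cref{thm:gtrpp} precisely as the Sch\"utzenberger involution, rather than some other symmetry.
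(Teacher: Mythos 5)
The paper states \Cref{thm:flagrpp} without any proof at all --- the theorem is followed immediately by the next subsection --- so there is no argument of the authors' to compare yours against, and I can only assess your proposal on its own terms. Your overall strategy is sound and, I believe, does prove the theorem: the beadwise intertwining of $L$ with the shift operators on adjacent runners, the transfer of kernel dimensions through the injective maps $L^{m-p-i}$ (and, with the extra contribution $n\cdot\min(s,p-j)$ from $\ker L^{p-j}=\im A^{j}$, through the surjective ones --- this is exactly what produces the forced entries equal to $n$ in the Gelfand--Tsetlin pattern), and the observation that the bijection of \Cref{thm:gtrpp} places $\lambda^{(i)}$ on runner $m-i$, which is precisely where the Jordan type of $A|_{V_i}$ lands. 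Note that this actually gives $\Phi_M=\Phi(\Psi_V)$ on the nose; the ``up to Sch\"utzenberger'' in the statement is the content of \Cref{rem:SI}, namely that $\tau\mapsto\Phi(\tau)$ is itself the involution at the level of crystal structures. Your closing paragraph is slightly tangled about where the involution enters, but reaches the right conclusion.

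Two points need repair. First, your sign claim is backwards: by \Cref{prop:colouringconditions} the four edges of a diamond receive the four distinct colours, so exactly one of them is red and the product of the signs around a diamond is $-1$, not $+1$ --- indeed this is exactly what makes the preprojective relation hold (see the proof of \Cref{thm:dommin-preproj}). The intertwining therefore only holds beadwise up to signs, $A_{j-1}(L(x))=\epsilon(x)\,L(A_j(x))$ with $\epsilon(x)=\pm 1$; this is harmless because $\ker$ of a power of the shift is unchanged by such a diagonal sign twist, and you should say that rather than assert the signs cancel. Second, the surjectivity of $M\mapsto(V_i)$ is where the flag condition $AV_i\subseteq V_{i-1}$ is actually consumed, and your sketch underplays it: one must check that $V_i\subseteq\im(L^{m-p-i})=\ker A^{i}$ and $V_{m-j}\supseteq\ker(L^{p-j})=\im(A^{j})$ (both follow by iterating $AV_i\subseteq V_{i-1}$), so that the image/preimage recipes are genuinely inverse to the forward map, and that the resulting graded subspace is closed under the right-going arrows $R$ as well as under $L$ --- this last point uses $LR=\pm A_j$ and $RL=\pm A_{j+1}$ on the relevant runners together with $AV_i\subseteq V_{i-1}$, and is the one place where ``$V\in F(A)$'' rather than ``$V$ is a flag'' enters. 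The alternative appeal to Maffei's isomorphism is not a proof unless you also verify that his map coincides with the one defined here.
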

From this theorem, we see that the description of the components of $ F(A) $ given in \Cref{th:classic} matches the description of the components of $ \Gr(\bC H(w)^{\oplus n}) $ from \Cref{thm:mainprecise}.

\subsection{Socle filtrations and RPPs} \label{se:socle}
We now give another interpretation of the bijection $\Irr \Gr(\bC H(w)^{\oplus n}) \cong \Rpp(w, n)$ which was suggested to us by Steven Karp and Hugh Thomas (see section 8 of their paper \cite{KT}).

Let $ M $ be a $\Pi$-module.  We define the socle filtration $ 0 \subset \soc(M) \subset \soc^2(M) \subset \cdots \subset M $ as follows.

\begin{definition}
First, recall that $soc (M)$ is the maximal semisimple submodule of M; so $\soc(M)_i \subset M_i$ is the kernel of all outgoing arrows from vertex $ i $.

Define the socle filtration of $M$ by requiring 
$$\soc^k(M) / \soc^{k-1}(M) = \soc( M / \soc^{k-1}(M)).$$ 
Equivalently, $ \soc^k(M)_i \subset M_i $ is the kernel of all paths of length $\ge k$ starting at vertex $i$ (to be more precise, such paths span a 2-sided ideal of $ \Pi $ and we are looking the annihilator of this ideal).
\end{definition}
The following simple observation will be used below.
\begin{lemma} \label{lem:socle}
The socle filtration behaves well with respect to submodules: if $ M' \subset M $ is a submodule, then $ \soc^k(M') = \soc^k(M) \cap M' $.
\end{lemma}

We define the \new{socle dimension matrix} of $M $ to be the map $ SD_M : I \times \bN \rightarrow \bN $ given by $ SD_M(i,k) = \dim \soc^k(M)_i - \dim \soc^{k-1}(M)_i $.  (Note that $ SD_M $ records the dimensions vectors of all subquotients $ \soc^k(M)/ \soc^{k-1}(M)$ and since they are semisimple, it uniquely specifies them.)

Given a dominant minuscule element $ w $ as before, we define a map $ H(w) \rightarrow I \times \bN $ using $ \pi $ in the first factor and using the level in the second factor.  This map is injective because the beads on any given runner are linearly ordered (and therefore appear on different levels).  So we can regard $ H(w) $ as a subset of $ I \times \bN $.  We can now compare the socle dimension matrix to the Jordan form information $ \Phi_M $ discussed previously.

\begin{theorem} \label{thm:SDPhi}
	Let $ M \subset \bC H(w)^{\oplus n }$.  Then $ SD_M|_{H(w)} = \Phi_M$ and $ SD_M $ is 0 outside of $ H(w) $.
\end{theorem}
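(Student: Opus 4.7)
The plan is to reduce the theorem to a clean description of the socle filtration of $\bC H(w)$ itself, and then pull everything back along the inclusion $M \subset \bC H(w)^{\oplus n}$.

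First, I would prove by induction on $k$ that $\soc^k(\bC H(w))$ equals the span of those beads $x \in H(w)$ with $\text{level}(x) \leq k$. For $k = 1$, the socle is the kernel of all outgoing arrows at each vertex; since every arrow in $\bC H(w)$ sends a bead down along a covering relation, this kernel is exactly the span of the minimal elements of $H(w)$, i.e.\ the level-$1$ beads. For the inductive step, the quotient $\bC H(w)/\soc^k(\bC H(w))$ inherits the structure of a $\Pi$-module whose underlying $I$-graded vector space has basis the beads of level $> k$, with arrows again acting by covering relations; its socle is thus spanned by the minimal elements of this sub-poset, which are precisely the beads of level $k+1$.

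Second, because $\soc^k(N)_i$ is characterized intrinsically as the subspace of $N_i$ annihilated by every path in $\Pi$ of length $\geq k$ starting at $i$, any submodule $M \subset N$ satisfies $\soc^k(M)_i = M_i \cap \soc^k(N)_i$. Applied to $M \subset \bC H(w)^{\oplus n}$, together with the obvious $\soc^k(\bC H(w)^{\oplus n}) = \soc^k(\bC H(w))^{\oplus n}$, the first step yields that $\soc^k(M)_i$ is $M_i$ intersected with the span of $n$-tuples supported on beads of runner $i$ at level $\leq k$. Now by construction $A_i$ shifts down the total order on $H(w)_i$, so $\ker A_i^s \cap \bC H(w)_i$ is spanned by the bottom $s$ beads $x_i^1, \ldots, x_i^s$, i.e.\ the beads at levels $\ell_1 < \cdots < \ell_s$. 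By \Cref{thm:lminuscule}, consecutive beads on the same runner differ in level by exactly $2$, so there are no beads on runner $i$ at any level strictly between $\ell_s$ and $\ell_{s+1}$. Hence for $\ell_s \leq k < \ell_{s+1}$ (with $\ell_0 = 0$ and $\ell_{q+1} = \infty$), the two subspaces coincide and $\soc^k(M)_i = \ker A_i^s \cap M_i$.

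Both assertions of the theorem then follow immediately. For $(i, k) \notin H(w)$, i.e.\ $k \notin \{\ell_1, \ldots, \ell_q\}$, the index $s$ attached to $k$ and to $k-1$ is the same, so $\soc^k(M)_i = \soc^{k-1}(M)_i$ and $SD_M(i,k) = 0$. For $(i, k) = (i, \ell_s) \in H(w)$, the index drops from $s$ to $s-1$ as $k$ decreases past $\ell_s$, so
\[
SD_M(i, \ell_s) = \dim(\ker A_i^s \cap M_i) - \dim(\ker A_i^{s-1} \cap M_i) = \Phi_M(x_i^s).
\]
The main obstacle is the first step, namely verifying that the abstract socle filtration of $\bC H(w)$ coincides with the concrete level filtration; the rest is an unpacking of definitions, using the minuscule constraint on bead spacings and the fact that socle filtrations are detected by intersections with submodules.
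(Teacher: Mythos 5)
Your proposal is correct and follows essentially the same route as the paper: identify $\soc^k(\bC H(w))$ with the span of the beads of level at most $k$, use $\soc^k(M) = M \cap \soc^k(\bC H(w)^{\oplus n})$, and match the level filtration on runner $i$ with the kernel filtration of $A_i$. One intermediate assertion is false, however: it is not true that consecutive beads on a runner differ in level by exactly $2$ (in the paper's own $D_5$ example the two beads on runner $4$ sit at levels $3$ and $7$; \Cref{thm:lminuscule} only forces two non-commuting beads \emph{somewhere} in the interval, not a diamond). Fortunately the conclusion you draw from it --- that no bead of runner $i$ has level strictly between $\ell_s$ and $\ell_{s+1}$ --- is true for a more elementary reason: level is strictly increasing along the totally ordered set $H(w)_i$, so every bead of runner $i$ with level at most $k$ for $\ell_s \le k < \ell_{s+1}$ is one of $x_i^1,\dots,x_i^s$. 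With that justification substituted, the argument is complete.
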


Combining this result with Theorem \ref{thm:mainprecise}, we immediately deduce the following.

\begin{corollary}
The bijection $\Irr \Gr(\bC H(w)^{\oplus n}) \cong \Rpp(w, n)$ maps $ Z $ to $ SD_M $, where $ M$ is a general point of $ Z $.
\end{corollary}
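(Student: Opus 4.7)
The Corollary follows immediately from Theorem~\ref{thm:mainprecise} and Theorem~\ref{thm:SDPhi}: the former identifies the crystal isomorphism $\Irr \Gr(\bC H(w)^{\oplus n}) \cong \Rpp(w,n)$ with $Z \mapsto \Phi_Z = \Phi_M$ for a generic $M \in Z$, while the latter equates $\Phi_M$ with $SD_M|_{H(w)}$ (and asserts that $SD_M$ vanishes off $H(w) \subset I \times \bN$). Composing these yields the bijection $Z \mapsto SD_M$. So the substantive task is to prove Theorem~\ref{thm:SDPhi}; I sketch that plan below.

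The heart of the argument is an explicit computation of the socle filtration of $\bC H(w)$. For each $k \geq 0$, let $N_k \subseteq \bC H(w)$ denote the span of those basis beads whose level is at most $k$. Since any arrow of $\overline{Q}$ sends a bead at level $\ell$ either to zero or to a covered bead at level $\ell-1$, each $N_k$ is a $\Pi$-submodule. I would then prove $\soc^k(\bC H(w)) = N_k$ by induction on $k$: writing $\soc^{k+1}(\bC H(w))/\soc^k(\bC H(w)) = \soc(\bC H(w)/N_k)$ via the inductive hypothesis, one checks that in the quotient, the image of a bead at level $k+1$ is killed by every arrow (its possible targets lie in $N_k$), whereas a bead at level $>k+1$ covers some bead of level $>k$ (by rankedness, Proposition~\ref{prop:rank}), giving a nonzero outgoing arrow in the quotient. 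Since distinct beads on the same runner must cover distinct beads on any neighbouring runner (otherwise the cover relation would be violated), the resulting contributions from different beads cannot cancel, and one concludes $\soc(\bC H(w)/N_k) = N_{k+1}/N_k$.

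To pass to a general submodule $M \subseteq \bC H(w)^{\oplus n}$, I would use the compatibility $\soc^k(M) = \soc^k(\bC H(w)^{\oplus n}) \cap M$. Setting $s(k) := |\{s : \text{level}(x_i^s) \leq k\}|$, the previous paragraph identifies $\soc^k(\bC H(w)^{\oplus n})_i$ with $\ker(A_i^{s(k)}) \cap \bC H(w)^{\oplus n}_i$, so $\soc^k(M)_i = \ker(A_i^{s(k)}) \cap M_i$. Taking graded pieces, $SD_M(i,k)$ telescopes to zero precisely when $s(k) = s(k-1)$, i.e.\ when $(i,k) \notin H(w)$; otherwise $(i,k)$ corresponds to some $x_i^s \in H(w)$ and $SD_M(i,k) = \Phi_M(x_i^s)$ by definition. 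The main subtlety to watch for is the $\supseteq$ direction of $\soc^k(\bC H(w)) = N_k$: a priori the signs from the colouring of Section~\ref{subsec:orientations} could conspire to enlarge the socle, but the quotient-by-quotient induction sidesteps this by reducing to the purely qualitative question of which arrows out of a given bead are nonzero---a question answered by the ranked heap structure alone.
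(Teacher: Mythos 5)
Your proof is correct and matches the paper's: the corollary is obtained there exactly by combining \Cref{thm:mainprecise} with \Cref{thm:SDPhi}, just as you say. Your supplementary sketch of \Cref{thm:SDPhi} also follows the paper's route (identifying $\soc^k(\bC H(w))$ with the span of the beads of level at most $k$, relating this to $\ker A_i^s$, and intersecting with $M$), differing only in establishing that identification by induction on the quotients $\bC H(w)/N_k$ rather than by the paper's direct argument that a bead of level $k$ is annihilated by every path of $k$ arrows but not by every path of $k-1$ arrows.
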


To prove \Cref{thm:SDPhi}, we begin with the following lemma.
\begin{lemma} \label{lem:SDPhi}
  Let $ w $ be dominant minuscule.
	\begin{enumerate}
		\item $ \soc^k(\bC H(w)) $ is spanned by those beads of level less than or equal to $ k $.
		\item
	$SD_{\bC H(w)}$ is 1 on the image of $H(w) $ and $0 $ elsewhere.
	\item Let $ i \in I $, let $ x_i^s $ be a bead on runner $ i $, and let $ k(s) $ be the level of $ x_i^s$.  Then $\soc^{k(s)}(\bC H(w))_i = \ker A_i^s $.
	\end{enumerate}
\end{lemma}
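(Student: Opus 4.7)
The plan is to introduce the \emph{level filtration} $V^{\bullet}$ on $\bC H(w)$ defined by
\[
  V^k := \operatorname{span}_{\bC}\{\,x \in H(w) : \mathrm{level}(x) \le k\,\},
\]
and show that $V^{\bullet}$ coincides with the socle filtration. Part 1 will then be the statement that $V^k = \soc^k(\bC H(w))$; part 2 will be immediate; and part 3 will follow by restricting to a runner.

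For part 1, I would first check that each $V^k$ is a $\Pi$-submodule. By the construction in \cref{subsec:preprojalgmodsfromheaps}, any single arrow $a:i\to j$ takes a bead $x \in H(w)_i$ either to $0$ or to $\pm y$ for the unique $y \in H(w)_j$ covered by $x$ in $H(w)$. Since covering relations in $H(w)$ strictly decrease level (by \Cref{prop:rank}), arrows send $V^k$ into $V^{k-1}$, proving both that $V^k$ is a submodule and that $V^k/V^{k-1}$ is semisimple. It remains to show the matching inductive characterization
\[
  V^{k+1}/V^k = \soc\bigl(\bC H(w)/V^k\bigr).
\]
The inclusion $V^{k+1}/V^k \subseteq \soc(\bC H(w)/V^k)$ is immediate from the previous paragraph. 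For the reverse inclusion, pick a bead $x$ with $\mathrm{level}(x) = m \ge k+2$. Because level is a rank function on $H(w)$, the element $x$ covers at least one bead $y$, and every such $y$ has $\mathrm{level}(y) = m-1 \ge k+1$, so $y \notin V^k$. The arrow from $\pi(x)$ to $\pi(y)$ sends $x \mapsto \pm y$, and since this is a single-arrow (not summed) computation, no sign cancellation can occur. Hence the image of $x$ in $\bC H(w)/V^k$ is not annihilated by $\Pi_{\ge 1}$, so $x \notin \soc(\bC H(w)/V^k) + V^k$. This forces $\soc(\bC H(w)/V^k) \subseteq V^{k+1}/V^k$.

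Part 2 follows from part 1: the graded piece $\soc^k(\bC H(w))/\soc^{k-1}(\bC H(w))$ has a basis indexed by the beads of level exactly $k$, and each such bead contributes dimension $1$ at its runner $\pi(x)$. Since the map $H(w) \hookrightarrow I \times \bN$ sending $x \mapsto (\pi(x), \mathrm{level}(x))$ is injective (beads on a common runner have distinct levels, by \Cref{rmk:runners} and the fact that level is a rank function), the value of $SD_{\bC H(w)}(i,k)$ is $1$ if $(i,k) \in H(w)$ and $0$ otherwise.

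For part 3, by part 1 we have $\soc^{k(s)}(\bC H(w))_i = \operatorname{span}\{\,x \in H(w)_i : \mathrm{level}(x) \le k(s)\,\}$. Since the totally ordered chain $x_i^1 < \cdots < x_i^q$ in $H(w)_i$ has strictly increasing levels, the condition $\mathrm{level}(x_i^r) \le k(s)$ is equivalent to $r \le s$. Thus $\soc^{k(s)}(\bC H(w))_i = \operatorname{span}(x_i^1,\dots,x_i^s)$, which is exactly $\ker A_i^s$ by the definition of $A_i$ as the downward shift on the basis $\{x_i^1,\dots,x_i^q\}$.

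The only delicate step is the non-cancellation argument in part 1; I expect the main obstacle will be rigorously justifying that applying a single arrow to a bead cannot produce a vector lying in $V^k$ when the bead has level $> k+1$. This is where it is crucial that arrows act on basis vectors one at a time as $\pm y$, so that the preprojective signs introduced in \Cref{prop:colouringconditions} play no role until one sums two arrows through a common vertex — which does not happen here.
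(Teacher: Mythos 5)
Your argument is correct and is essentially the paper's: both proofs rest on the facts that arrows act by covering relations, that covering relations drop the level by exactly one, and that a bead of level $m$ sits atop a chain of covering relations of length $m-1$ but no longer chain; the paper packages this via its ``kernel of all paths of length $\ge k$'' description of $\soc^k$, while you verify the recursive definition of the socle filtration directly. One small point to tighten: showing that each individual \emph{bead} of level $\ge k+2$ fails to lie in $\soc(\bC H(w)/V^k)$ does not by itself force $\soc(\bC H(w)/V^k)\subseteq V^{k+1}/V^k$ for arbitrary linear combinations; this is repaired by observing that each arrow acts monomially on the bead basis (distinct beads on runner $i$ are sent to $0$ or to pairwise distinct beads on runner $j$, since a bead on runner $j$ is covered by at most one bead per neighbouring runner), so the kernel of any arrow, and hence the socle of any quotient by a span of beads, is itself spanned by beads.
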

\begin{proof}
	\begin{enumerate}
	\item If a bead $x$ has level $k $, then we can find a sequence $ x_1 = x > \cdots > x_k $ where each $ x_j > x_{j+1} $ is a covering relation.  By the definition of the module structure on $ \bC H(w) $ this gives us a path of arrows of length $ k-1 $ which takes $ x $ to $ x_k $ (up to a sign) and thus $ x $ does not lie in $ \soc^{k-1}(\bC H(w)) $.  On the other hand, if we have a path of arrows of length $ k $, then this would act by a sequence of $k $ covering relations.  As the bead $ x $ has level $ k $, no such sequence can exist.  Thus we conclude that $ x $ is killed by all paths of length $ k $ and thus lies in $ \soc^k(\bC H(w)) $.
	
	\item This follows immediately from part 1.
	\item From the definition of $ A_i$, we see that $ \ker A_i^s $ is the span of $ \{x_i^1, \dots, x_i^s \} $.  So the statement follows from part 1. 
		\end{enumerate}
\end{proof}

\begin{proof}[Proof of \Cref{thm:SDPhi}]
By Lemma \ref{lem:socle}, since $M \subset \bC H(w)^{\oplus n}$ we have $ \soc^k(M)_i = \soc^k(\bC H(w)^{\oplus n})_i \cap M_i $ for all $i$.  By the second part of the above \Cref{lem:SDPhi}, $ SD_{\bC H(w)^{\oplus n}} $ and hence also $ SD_M $ vanishes outside the image of $ H(w) $.  By the third part of the lemma, $ \soc^{k(s)}(\bC H(w)^{\oplus n})_i = \ker A_i^s $ where $ k(s) $ is the level of $ x_i^s $. 

Let $x_i^s\in H(w)_i$ with image $(i,k(s)) \in I \times \bN$. 
\begin{align*}
  SD_M(i,k(s)) &= \dim \soc^{k(s)} M_i - \dim \soc^{k(s)-1} M_i \\ 
               &= \dim \soc^{k(s)} (\bC H(w)^{\oplus n})_i\cap M_i - \dim \soc^{k(s)-1} (\bC H(w)^{\oplus n})_i\cap M_i \\
               &= \dim \ker A_i^s \cap M_i - \dim \ker A_i^{s-1} \cap M_i = \Phi_M(x_i^s)
\end{align*} 
where the third equation follows by the first part of the above lemma. More precisely, since $k(s-1) \le k(s) - 1$ and $\dim \soc^{k(s) - 1} (\bC H(w)^{\oplus n})_i$ is spanned by those beads of level less than or equal to $k(s) - 1$, it is spanned by $\{x_i^1,\dots,x_i^{s-1}\}$. 
\end{proof}

\section{Tensor product varieties via the Savage--Tingley isomorphism}
\label{sec:tensor}

\subsection{Cores and quiver Grassmannians}
The goal of this section is to study Nakajima's tensor product varieties from the viewpoint of the Savage--Tingley isomorphism.  

We begin by recalling the definition of the Nakajima quiver varieties.  Let $ \lambda \in P_+ $ and let $ \mu \in P $.  Write $ \lambda = \sum n_i \omega_i $ and $ \lambda - \mu = \sum v_i \alpha_i $ and assume that all $ v_i \in \bN $.

Let $ V$ and $ N $ be $ I$-graded vector spaces with dimensions $ \bv$ and $ \bn $ respectively.  We define the Nakajima quiver variety $ M(\bv, \bn) $ as follows.  We begin with
$$ 
  \Hom(\bv, \bn) := \bigoplus_{i \to j\in E} \Hom(V_i, V_j) \oplus \bigoplus_{i \in I} \Hom(V_i, N_i) 
$$ 
The cotangent bundle $T^*\!\Hom(\bv, \bn) $ comes with an action of the group $ GL(V) := \prod_i GL(V_i)$ with moment map 
$ \Psi : T^*\!\Hom(\bv, \bn) \rightarrow \mathfrak{gl}(V) $.  
We define $ M(\bv, \bn) := \Psi^{-1}(0) \sslash_{\chi} GL(V) $ where $ \chi $ is the product of the determinant characters.

There is a projective morphism $ M(\bv, \bn) \rightarrow M_0(\bv, \bn) :=  \Psi^{-1}(0) \sslash_0 GL(V) $.  The preimage of 0 under this morphism is called the core of the Nakajima quiver variety and is denoted $ L(\bv, \bn)$.  
Let $ M(\bn) := \bigsqcup_{\bv} M(\bv, \bn) $ and $ L(\bn) := \bigsqcup_{\bv} L(\bv, \bn) $.

We refer to the following result as the Savage--Tingley \cite[Theorem 4.4]{savage2011quiver} isomorphism (though the result is actually due to Lusztig \cite{lusztig1998quiver} and Shipman \cite{shipman2010representation}).
\begin{theorem} \label{th:STiso}
With notation as above, we have an isomorphism $ L( \bn) \cong Gr(T(\lambda)) $.
\end{theorem}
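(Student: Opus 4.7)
The plan is to produce an explicit bijection $L(\bn) \leftrightarrow \Gr(T(\lambda))$ by exploiting the fact that $T(\lambda)$ is the injective hull of $\bigoplus_i S(i)^{\oplus n_i}$ in the category of nilpotent $\Pi$-modules. Concretely, a point of $L(\bn)$ is a $GL(V)$-orbit of a tuple $(B,\alpha,\beta) \in \Psi^{-1}(0)$, where $B = \{B_a\}_{a \in \overline{E}}$, $\alpha: N \to V$, $\beta: V \to N$, subject to $\chi$-stability and with $B$ nilpotent (the core condition). For $\chi$ a product of determinant characters, stability amounts to the assertion that no nonzero $B$-invariant subspace of $V$ is contained in $\ker \beta$.

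The forward map $L(\bn) \to \Gr(T(\lambda))$ is constructed as follows. Fix once and for all an identification $N \cong \soc T(\lambda)$. Since $T(\lambda)$ is injective in the category of nilpotent $\Pi$-modules, Hom-adjunction with the socle functor provides the universal property that every map from a nilpotent $\Pi$-module to $N$ extends uniquely to $T(\lambda)$. Using the moment map equation, one verifies that $(V, B)$ can be viewed as a nilpotent $\Pi$-module (the obstruction $\alpha\beta$ vanishes on the piece of $V$ needed for the extension), so the map $\beta : V \to N$ lifts uniquely to a $\Pi$-linear $\widetilde\beta : V \to T(\lambda)$. Stability is exactly equivalent to injectivity of $\widetilde\beta$: any nonzero kernel would provide a nonzero $B$-invariant subspace contained in $\ker\beta$. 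One then sets $\Phi(B,\alpha,\beta) := \widetilde\beta(V) \subset T(\lambda)$, which is manifestly $GL(V)$-invariant and hence descends to a map on orbits.

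For the inverse, starting from $M \subset T(\lambda)$, I would take $V = M$ with its induced $\Pi$-module structure (providing $B$), define $\beta$ as the composition $M \hookrightarrow T(\lambda) \twoheadrightarrow \soc T(\lambda) = N$ for a chosen splitting of the socle inclusion, and recover $\alpha$ from the moment map equation. The stability condition on the resulting triple follows from the observation that any $B$-invariant $V' \subset \ker\beta$ inside $M$ would have trivial intersection with $\soc T(\lambda)$, forcing $V' = 0$. Checking that the two constructions are mutually inverse uses the uniqueness clause in the universal property of the injective hull.

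The main obstacle will be upgrading this set-theoretic bijection to an isomorphism of algebraic varieties. This requires exhibiting both directions in families, which in turn requires showing that the lift $\widetilde\beta$ varies algebraically with $(B,\beta)$ and that the various choices (splittings, identifications) can be made functorially at least locally. This is precisely what is done in \cite[Theorem 4.4]{savage2011quiver}, building on the sheaf-theoretic framework of \cite{lusztig1998quiver} and the Morita-theoretic reinterpretation of \cite{shipman2010representation}. I would therefore conclude the proof by invoking their result, after checking that our sign and orientation conventions for $\Pi$ (fixed in \cref{subsec:orientations}) match the conventions used there.
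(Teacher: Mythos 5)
The paper offers no proof of this statement beyond citing \cite[Theorem 4.4]{savage2011quiver} (with attribution to Lusztig and Shipman), and your argument ultimately does the same: after sketching the standard injective-hull/stability construction, you conclude by invoking that theorem. So your proposal is correct and takes essentially the same approach as the paper; the preliminary sketch is a fair outline of how the cited proof goes, though the step where you dismiss the obstruction $\alpha\beta$ to $(V,B)$ being an honest $\Pi$-module is exactly the point where the cited reference does real work.
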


\subsection{A \texorpdfstring{$\Cx$}{C*}-action}
\label{subsec:cxaction}
Choose a splitting $\bn= \bn^1 + \dots + \bn^r $, which corresponds to a composition $ \lambda = \lambda^1 + \cdots + \lambda^r $. 
This defines a splitting $ N = N^1 \oplus \cdots \oplus N^r$ with $ \dimvec N^j = \bn^j $.  We define an action of $ \Cx $ on $ N $ by 
$$ s \cdot (u_1, \cdots, u_r) = ( s^{r-1} u_1, s^{r-2} u_2, \dots, u_r)\,. $$

This gives rise to an action of  $\Cx $ on both $ M( \bn) $ and $ L(\bn) $. 
From \cite[Lemma 3.2]{Nak01}, we have the following isomorphisms.
\begin{align}
    M(\bn)^\Cx &\cong M(\bn^1) \times \cdots \times M( \bn^r) \\
    L(\bn)^\Cx &\cong L(\bn^1) \times \cdots \times L( \bn^r) \label{eq:fixedpoints} 
\end{align}
We also get a splitting 
 $ T(\lambda) = T(\lambda^1) \oplus \cdots \oplus T(\lambda^r) $ 
and thus an action of $ \Cx $ on 
$T(\lambda)$ where $ s $ acts on 
$ T(\lambda^k) $ 
by $ s^{r-k} $, for each $k$.  
This induces an action of $ \Cx $ on $ Gr(T(\lambda)) $ and we have the evident isomorphism 
\begin{equation} \label{eq:fixedpoints2}
   Gr(T(\lambda))^{\Cx} \cong Gr(T(\lambda^1))\times \cdots \times Gr(T(\lambda^r))\,.   
\end{equation}

Thus we have defined an action of $ \Cx $ on both sides of the Savage--Tingley isomorphism.  This isomorphism is equivariant for these actions (by \cite[Prop 5.1]{savage2011quiver}) and thus yields an isomorphism between the fixed point sets compatible with the isomorphisms from \Cref{th:STiso} (see \cite[Thm 5.4]{savage2011quiver})
$$
  L(\bn^1) \times \cdots \times L(\bn^r) \cong Gr(T(\lambda^1))\times \cdots \times Gr( T(\lambda^r))
$$

\subsection{Tensor product varieties}
Following Nakajima, we consider the tensor product variety
$$
Z( \bn) = \left\{ x \in M(\bn) : \lim_{s\rightarrow 0} s \cdot x \in L( \bn)^\Cx \right\} \,. 
$$

According to Proposition 3.14 from \cite{Nak01}, there is a bijection 
\begin{equation} \begin{aligned} \label{eq:bijIrr}
   \Irr L(\bn^1) \times \cdots \times \Irr L(\bn^r) 
  &\to \Irr Z(\bn) \\
 (X_1, \dots, X_r) &\mapsto \overline{ \left \{ x \in M(\bn) : \lim_{s \to 0} s \cdot x \in X_1 \times \cdots \times X_r \right \}} \,.
\end{aligned}
\end{equation} 

Because $ L( \bn) $ is projective, every point in $ L( \bn) $ has a limit and so $L( \bn) \subset Z( \bn)$.  This is actually an inclusion of varieties of the same dimension (in each connected component) and thus gives an inclusion $ \Irr L( \bn) \subset \Irr Z( \bn)$.
\begin{theorem}(\cite[Theorem 4.6]{Nak01}.) \label{th:Nak}
  There is a crystal structure on $ \Irr Z( \bn) $, extending the crystal structure on $ \Irr L(\bn) $, such that \cref{eq:bijIrr} is an isomorphism of crystals (with respect to the tensor product crystal structure on the left hand side). 
\end{theorem}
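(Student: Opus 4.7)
The plan is to extend Saito's geometric crystal on $\Irr L(\bn)$ to $\Irr Z(\bn)$ via the same Hecke correspondences used in \cite{Sai02}, and then to verify that the bijection in \cref{eq:bijIrr} intertwines this extended structure with the tensor product crystal on $\prod_k \Irr L(\bn^k)$ coming from \Cref{th:Nak}'s right-hand side.

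First I would define the crystal data on $\Irr Z(\bn)$. For $Y \in \Irr Z(\bn)$ lying in the connected component indexed by $\bv$, set $wt(Y) = \lambda - \sum_i v_i \alpha_i$ and take $\varepsilon_i(Y)$ to be the generic value on $Y$ of $\dim \coker(\mu_{i,x})$, where $\mu_{i,x}$ is the natural map at vertex $i$ whose cokernel governs when one can shrink $V_i$. Then $\varphi_i$ is forced by the upper semi-normal axiom. The raising and lowering operators $\tilde e_i, \tilde f_i$ are defined by Hecke correspondences $H_i \subset M(\bv,\bn) \times M(\bv - e_i,\bn)$ in the usual way: $\tilde f_i Y$ is the unique component whose intersection with $H_i(Y, -)$ is dense. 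The key input that makes all this restrict to $Z(\bn)$ is that $H_i$ is $\Cx$-equivariant with respect to the action of \cref{subsec:cxaction}, since the latter acts only on the framing $N$ and commutes with the $GL(V)$-action used to construct $H_i$. Consequently $H_i$ preserves the attracting-set condition defining $Z(\bn)$, and the crystal axioms for $\Irr Z(\bn)$, together with the compatibility $\Irr L(\bn) \hookrightarrow \Irr Z(\bn)$ as subcrystals, follow from Saito's original arguments applied componentwise.

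The main obstacle, and the heart of the proof, is showing that the bijection in \cref{eq:bijIrr} is a crystal isomorphism. The weight equality is immediate from additivity of dimension vectors under taking limits. For the signature rule, the strategy is to pick a general point $x \in \overline{Y}$ in the closure appearing in \cref{eq:bijIrr}, and compare the crystal-theoretic data at $x$ with that of its limit $\lim_{s \to 0} s \cdot x = (x_1, \dots, x_r) \in X_1 \times \cdots \times X_r$ under the isomorphism \cref{eq:fixedpoints}. The central computation is that $\coker(\mu_{i,x})$ inherits a $\Cx$-weight filtration from the splitting $N = N^1 \oplus \cdots \oplus N^r$, and a rank calculation along this filtration shows
\[
  \varepsilon_i(Y) \;=\; \max_k \Bigl( \varepsilon_i(X_k) - \bigl\langle \alpha_i^\vee,\, wt(X_1) + \cdots + wt(X_{k-1}) \bigr\rangle \Bigr),
\]
which matches the signature-rule prediction on the tensor factor side. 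The same analysis applied to the Hecke correspondence, carried out $\Cx$-equivariantly, pinpoints the tensor position on which $\tilde f_i$ must act.

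The delicate part of this argument is controlling the generic behavior of $H_i$ inside the attracting set: $Z(\bn)$ is not itself a Nakajima quiver variety, so one cannot invoke quiver-variety transversality directly, and instead must use a version of Nakajima's Lemma 3.10 / Proposition 4.2 of \cite{Nak01} to show that a generic pair in $H_i \cap (Z(\bn) \times Z(\bn))$ has a limit lying in a generic product $X_1 \times \cdots \times X_r$ with a Hecke modification occurring in exactly one of the factors. Granted this transversality statement, the remainder of the verification is bookkeeping with the weight filtration described above, and the crystal isomorphism follows.
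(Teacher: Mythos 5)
The paper does not actually prove this statement: it is cited directly from Nakajima \cite[Theorem 4.6]{Nak01}, so there is no in-paper argument to compare yours against. Judged as a reconstruction of Nakajima's proof, your outline identifies the right ingredients --- the cokernel description of $\varepsilon_i$, Hecke correspondences for the operators, $\Cx$-equivariance coming from the fact that the torus acts only on the framing $N$, and the comparison of $\varepsilon_i$ at a generic point of an attracting set with the value $\max_k\bigl(\varepsilon_i(X_k)-\langle \alpha_i^\vee,\, wt(X_1)+\cdots+wt(X_{k-1})\rangle\bigr)$, which is indeed the iterated form of the tensor product rule recalled in \Cref{subsec:tensorcrystals}. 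Two steps, however, are stated more easily than they are proved. First, equivariance of $H_i$ alone does not give that the operators preserve $Z(\bn)$: you also need properness of the relevant projection of $H_i$ so that $\lim_{s\to 0}s\cdot y$ exists and lies over $\lim_{s\to 0}s\cdot x$, hence in $L(\bn)^{\Cx}$. Second, $\coker(\mu_{i,x})$ at a non-fixed point $x$ carries no genuine $\Cx$-weight decomposition; what exists is the filtration of the quiver data induced by membership in the attracting set, and comparing a filtered map with its associated graded a priori only bounds $\dim\coker$ on one side. Showing that the generic point of the attracting set actually attains the maximum, rather than something smaller, is the content of Nakajima's key lemma and is where the real work of the proof sits. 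These are gaps of detail rather than of strategy; your sketch is a faithful outline of the argument the paper is importing by citation.
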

Combining with \cref{eq:fixedpoints}
$$ 
\Irr L(\bn) \subset \Irr Z( \bn) \cong  \Irr L( \bn^1) \times \cdots \times \Irr L( \bn^r)
$$
and we get the crystal inclusion $$ B(\lambda) \subset B(\lambda^1) \otimes \cdots \otimes B(\lambda^r)\,. 
$$
\subsection{Components and filtrations}
\label{sec:compfilt}
Let  $ M \subset T(\lambda) $ 
be a submodule.  For each $k = 1, \dots, r $, we can consider the submodule
$$ M^{\le k} := M \cap \left( T(\lambda^1) \oplus \cdots \oplus T(\lambda^k)\right) $$
This defines a filtration $ M^{\le 1} \subset M^{\le 2} \subset \cdots \subset M^{\le r} $.  
We can regard the subquotients $M^k := M^{\le k} / M^{\le k-1} $ as a submodules of $ T(\lambda^k) $.
\begin{example}\label{ex:jfilt}
  Choose $ \fg = \mathfrak{sl}_4$ and ${\bf n} = (0,3,0) $, with $ \mathbf n^1 = \mathbf n^2 = \mathbf n^3 = (0,1,0)$, corresponding to $3\omega_2 = \omega_2 + \omega_2 + \omega_2$. We label the standard basis vectors of $T(w)^{\oplus 3} = T(\omega_2)^{\oplus 3}$ so that vectors with superscript $k$ lie in the $k$th copy of $T(\omega_2)$ in the direct sum as follows
  $$
  P(w)^{\oplus 3} = \begin{tikzcd}[row sep = tiny, column sep = tiny]
    & \mathbb{C}^3 = \mathrm{span}(v_2^1, v_2^2, v_2^3) \ar[dl]\ar[dr]\\
    \mathbb{C}^3 = \mathrm{span}(u_1^1, u_1^2, u_1^3) \ar[dr] & & \mathbb{C}^3 = \mathrm{span}(w_1^1, w_1^2, w_1^3) \ar[dl] \\
    & \mathbb{C}^3 = \mathrm{span}(v_1^1, v_1^2, v_1^3)
  \end{tikzcd}.
  $$
  Now let
  \begin{align*}
  M = \mathrm{span}( & u_1^1 + u_1^2 + u_1^3, \\
  & v_1^1, v_1^2, v_1^3, v_2^1 + v_2^2 + v_2^3, \\
  & w_1^1 + w_1^2 + w_1^3, 3w_1^3 + 2w_1^2 + w_1^3).
  \end{align*}
  Then we have
  $$
  M^{\le 1} = \mathrm{span}(v_1^1) \subset 
  M^{\le 2} = \mathrm{span}(v_1^1, v_1^2, 2w_1^3 + w_1^2) \subset 
  M^{\le 3} = M,
  $$
  and we can write the subquotients as 
  $$
  M^1 = \begin{tikzcd}[row sep = tiny, column sep = tiny]
    & 0 \ar[dl]\ar[dr]\\
    0 \ar[dr] & & 0 \ar[dl] \\
    & 1
  \end{tikzcd},  ~
  M^2 = \begin{tikzcd}[row sep = tiny, column sep = tiny]
    & 0 \ar[dl]\ar[dr]\\
    0 \ar[dr] & & 1 \ar[dl] \\
    & 1
  \end{tikzcd}, ~
  M^3 = \begin{tikzcd}[row sep = tiny, column sep = tiny]
    &1\ar[dl]\ar[dr]\\
    1\ar[dr] & & 1\ar[dl] \\
    & 1
  \end{tikzcd}.
  $$
\end{example}

\begin{lemma}
  With the above notation, we have $$ \lim_{s \rightarrow 0} M = (M^1, \dots, M^r)$$ in $Gr(T(\lambda^1)) \times \cdots \times Gr(T(\lambda^r)) $.
\end{lemma}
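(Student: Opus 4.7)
The plan is to recognize the limit as the associated graded of the weight filtration. Since $s$ acts on $T(\lambda^k)$ by $s^{r-k}$, the filtration $M^{\le 1}\subset\cdots\subset M^{\le r}=M$ is precisely the decreasing weight filtration on $M$: an element of $M$ lies in $M^{\le k}$ iff all of its components at weight strictly less than $r-k$ vanish. So I expect the limit of $s \cdot M$ as $s\to 0$ to be the associated graded $\bigoplus_k M^{\le k}/M^{\le k-1}\subset\bigoplus_k T(\lambda^k)=T(\lambda)$, which under \eqref{eq:fixedpoints2} is exactly the tuple $(M^1,\ldots,M^r)$.

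To make this precise, I would choose a basis $\{m_\alpha\}$ of $M$ adapted to the filtration, so that each $m_\alpha$ has a well-defined leading index $k(\alpha)$ characterized by $m_\alpha \in M^{\le k(\alpha)}\setminus M^{\le k(\alpha)-1}$, and so that the images $\{[m_\alpha]: k(\alpha)=k\}$ form a basis of $M^k$ for each $k$. Writing $m_\alpha=\sum_{j\le k(\alpha)} m_{\alpha,j}$ with $m_{\alpha,j}\in T(\lambda^j)$, I have $m_{\alpha,k(\alpha)}\ne 0$ and the collection $\{m_{\alpha,k(\alpha)}\}_\alpha$ is a basis of $\bigoplus_k M^k$.

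Next I rescale: set $m_\alpha(s):=s^{-(r-k(\alpha))}(s\cdot m_\alpha)=m_{\alpha,k(\alpha)}+\sum_{j<k(\alpha)} s^{k(\alpha)-j}m_{\alpha,j}$. This is a polynomial family in $s$, equal to a nonzero rescaling of $s\cdot m_\alpha$ for $s\ne 0$, so $\operatorname{span}\{m_\alpha(s)\}=s\cdot M$ for $s\ne 0$. As $s\to 0$, $m_\alpha(s)\to m_{\alpha,k(\alpha)}$, and these limits form a basis of $\bigoplus_k M^k$ of the correct total dimension, so the limit in the Grassmannian is $\bigoplus_k M^k$, which identifies with $(M^1,\ldots,M^r)$ under \eqref{eq:fixedpoints2}.

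There is no real obstacle here, but one technical point worth making sure of is that each $M^k$ really is a $\Pi$-submodule of $T(\lambda^k)$: this follows because the projection $M^{\le k}\to T(\lambda^k)$ is a $\Pi$-module map (arrows act componentwise on the direct sum) with kernel $M^{\le k-1}$. A second minor point is checking that rescaling individual basis vectors is legitimate for computing limits in the Grassmannian; this is immediate from the standard description of limits via the Plücker embedding, where the rescalings simply contribute an overall scalar that drops out in projective space.
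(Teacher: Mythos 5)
Your proof is correct and follows essentially the same route as the paper, which simply invokes ``a standard fact about limits of points in Grassmannians under $\Cx$-actions''; your adapted-basis/rescaling argument is exactly a proof of that standard fact, specialized to the weight filtration $M^{\le 1}\subset\cdots\subset M^{\le r}$. The extra checks you include (that each $M^k$ is a $\Pi$-submodule, and that the Pl\"ucker embedding justifies taking limits of rescaled bases) are sound and only make explicit what the paper leaves implicit.
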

\begin{proof}
    The $ \Cx $-action on the quiver Grassmannian $ \Gr(T(\lambda))$ is the restriction of a $ \Cx $-action on the usual Grassmannian of all (graded) subspaces of $ N $.  With this in mind, the Lemma follows from a standard fact about limits of points in Grassmannians under $ \Cx $ actions.
\end{proof}
From this Lemma and \Cref{th:Nak}, we deduce the following.

\begin{theorem}\label{thm:components}
   Let $ (X_1, \dots, X_r) \in \Irr Gr(T(\lambda^1)) \times \cdots \times \Irr Gr(T(\lambda^r))$.
  Assume that $ X_1 \otimes \cdots \otimes X_r $ lies in $B(\lambda) \subset B(\lambda^1) \otimes \cdots \otimes B(\lambda^r) $.  
  Then the closure $Z(X_1,\dots,X_r)$ of 
  $$  
  \left \{ M \in \Gr(T(\lambda)) : M^k \in X_k, \text{ for $ k = 1, \dots, r $ } \right \}
  $$
  is an irreducible component of  $\Gr(T(\lambda))$.
	Moreover, $(X_1,\dots,X_r)\mapsto Z(X_1,\dots,X_r)$ gives the crystal isomorphism $ B(\lambda) \rightarrow \Irr \Gr(T(\lambda))$.
\end{theorem}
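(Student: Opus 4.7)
The plan is to deduce this theorem by transporting everything through the Savage--Tingley isomorphism of \Cref{th:STiso} and then applying Nakajima's \Cref{th:Nak}. The basic scheme is:  Nakajima's theorem describes irreducible components of $Z(\bn)$ in terms of the tuples $(X_1,\dots,X_r)$, and the inclusion $\Irr L(\bn)\subset \Irr Z(\bn)$ cuts out those tuples whose tensor $X_1\otimes\cdots\otimes X_r$ lies in $B(\lambda)$; we must match this with the filtration description of $\Gr(T(\lambda))$.

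First, I would push the setup over to the $L(\bn)$ side via the Savage--Tingley isomorphism $\Gr(T(\lambda))\cong L(\bn)$, using its $\Cx$-equivariance (recalled in \cref{subsec:cxaction}) to identify the two realizations of fixed points appearing in \cref{eq:fixedpoints} and \cref{eq:fixedpoints2}. This shows that the bijection of \cref{eq:bijIrr}, restricted to the subset of irreducible components corresponding (via Nakajima) to elements of $B(\lambda)\subset B(\lambda^1)\otimes\cdots\otimes B(\lambda^r)$, is exactly the map we are studying when we transport back via Savage--Tingley.

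Second, I would use the preceding lemma to identify the two descriptions of the relevant set: for $M\in\Gr(T(\lambda))$, the condition $M^k\in X_k$ for all $k$ is equivalent, under Savage--Tingley, to $\lim_{s\to 0} s\cdot M\in X_1\times\cdots\times X_r$. Thus, the set
\[
\{M\in \Gr(T(\lambda)) : M^k\in X_k \text{ for all } k\}
\]
corresponds under Savage--Tingley to
\[
\{x\in L(\bn) : \lim_{s\to 0} s\cdot x \in X_1\times\cdots\times X_r\} = L(\bn)\cap \overline{\{x\in M(\bn) : \lim_{s\to 0} s\cdot x \in X_1\times\cdots\times X_r\}}.
\]
The closure on the right is, by \cref{eq:bijIrr}, a component of $Z(\bn)$; since $L(\bn)\subset Z(\bn)$ is an inclusion of the same dimension, and since the hypothesis $X_1\otimes\cdots\otimes X_r\in B(\lambda)$ says (by \Cref{th:Nak}) that this component meets $L(\bn)$ in a component, we conclude that $Z(X_1,\dots,X_r)$ is an irreducible component of $\Gr(T(\lambda))$.

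Finally, for the crystal statement, observe that the map $(X_1,\dots,X_r)\mapsto Z(X_1,\dots,X_r)$ is, by construction, the composition of the crystal isomorphism of \Cref{th:Nak} (restricted to the subcrystal $B(\lambda)\subset B(\lambda^1)\otimes\cdots\otimes B(\lambda^r)$ corresponding to $\Irr L(\bn)\subset \Irr Z(\bn)$) with the Savage--Tingley identification $\Irr L(\bn)\cong \Irr \Gr(T(\lambda))$; the latter is a crystal isomorphism to $B(\lambda)$ by \Cref{thm:DemCrystalGeometric}. Both pieces are $\Cx$-equivariant by \cite[Prop 5.1]{savage2011quiver}, which is what makes the composition a well-defined crystal isomorphism. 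The main obstacle, as I see it, is being careful about the compatibility of the two $\Cx$-actions and checking that the irreducibility and dimension claim really follow from $L(\bn)\subset Z(\bn)$ being a same-dimensional inclusion componentwise; everything else is formal once the lemma is in hand.
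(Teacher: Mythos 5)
Your proposal is correct and follows essentially the same route as the paper, which deduces the theorem directly from the filtration-equals-limit lemma of \cref{sec:compfilt} together with \Cref{th:Nak}, \cref{eq:bijIrr}, and the equivariant Savage--Tingley identification. The only quibble is your displayed set equality $\{x\in L(\bn):\lim s\cdot x\in X_1\times\cdots\times X_r\}=L(\bn)\cap\overline{\{x\in M(\bn):\cdots\}}$, which a priori only holds after one knows the relevant component of $Z(\bn)$ already lies inside $L(\bn)$ (which your dimension argument does establish), so the logic goes through.
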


\subsection{Generalization to Demazure crystals}

Now, we will generalize the analysis of the previous section to Demazure tensor product varieties and Demazure crystals.  Fix any $ \lambda \in P_+ $ and any $ w \in W $.  We have an inclusion $ T(\lambda, w) \subset T(\lambda) $ which yields an inclusion of the quiver Grassmannians $ \Gr (T(\lambda, w)) \subset \Gr(T(\lambda)) $.  By \Cref{thm:DemCrystalGeometric}, this gives an inclusion of irreducible components $ \Irr \Gr (T(\lambda, w)) \subset \Irr \Gr(T(\lambda)) $ which corresponds to the crystal embedding $ B_w(\lambda) \subset B(\lambda)$.

On the other hand, we can consider the inclusion $ B(\lambda) \subset B(\lambda^1) \otimes \cdots \otimes B(\lambda^r) $.  Under this inclusion, we see that the Demazure crystal $ B_w(\lambda) $ is sent into the corresponding tensor product of Demazure crystals $ B_w(\lambda^1)\otimes \cdots \otimes B_w(\lambda^r)$ (because the extremal weight element $ b_{w\lambda}$ mapsto $ b_{w\lambda^1} \otimes \cdots \otimes b_{w\lambda^r}$).

Note also that $ T(\lambda,w) = T(\lambda^1, w) \oplus \cdots \oplus T(\lambda^r,w) $ and thus for $ M \in \Gr(T(\lambda,w))$, $ M^k \in \Gr(T(\lambda^k,w))$.

Combining these observations, \Cref{thm:components} actually implies the analogous statement for Demazure modules.
\begin{theorem}\label{thm:componentsDem}
	Let $ (X_1, \dots, X_r) \in \Irr Gr(T(\lambda^1,w)) \times \cdots \times \Irr Gr(T(\lambda^r,w))$ be such that
	$ X_1 \otimes \cdots \otimes X_r $ lies in $B_w(\lambda) \subset B_w(\lambda^1) \otimes \cdots \otimes B_w(\lambda^r) $.  
	Then the closure $Z(X_1,\dots,X_r)$ of 
	$$  
	\left \{ M \in \Gr(T(\lambda,w)) : M^k \in X_k, \text{ for }k = 1, \dots, r \right \}
	$$
	is an irreducible component of  $\Gr(T(\lambda,w))$.
	Moreover, $(X_1,\dots,X_r)\mapsto Z(X_1,\dots,X_r) $ gives the crystal isomorphism $ B_w(\lambda) \rightarrow \Irr \Gr(T(\lambda,w))$.
\end{theorem}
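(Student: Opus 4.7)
The strategy is to deduce this directly from Theorem \ref{thm:components}, using the three compatibilities noted in the paragraphs immediately before the statement. Given $(X_1,\ldots,X_r)$ as in the hypothesis, I would first use the inclusions $\Gr(T(\lambda^k,w))\hookrightarrow \Gr(T(\lambda^k))$ of Theorem \ref{thm:DemCrystalGeometric} to regard each $X_k$ as an irreducible component of $\Gr(T(\lambda^k))$. Since $X_1\otimes\cdots\otimes X_r$ lies in $B_w(\lambda)\subset B(\lambda)$, Theorem \ref{thm:components} applies and produces an irreducible component
\[
Z^{\mathrm{full}}(X_1,\ldots,X_r)=\overline{\{M\in\Gr(T(\lambda)):M^k\in X_k\text{ for all }k\}}\subset\Gr(T(\lambda)),
\]
corresponding to $X_1\otimes\cdots\otimes X_r$ under the crystal isomorphism $B(\lambda)\cong\Irr\Gr(T(\lambda))$.

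The key step is to show that $Z^{\mathrm{full}}(X_1,\ldots,X_r)$ lies inside the closed subvariety $\Gr(T(\lambda,w))$. This is where part 3 of Theorem \ref{thm:DemCrystalGeometric} does the work: it identifies the inclusion $\Irr\Gr(T(\lambda,w))\subset\Irr\Gr(T(\lambda))$ with the crystal inclusion $B_w(\lambda)\subset B(\lambda)$. Since $Z^{\mathrm{full}}(X_1,\ldots,X_r)$ corresponds under the crystal isomorphism to an element of $B_w(\lambda)$, it must already be an irreducible component of $\Gr(T(\lambda,w))$, and in particular is contained in it. With this containment in hand, the defining subset $\{M\in\Gr(T(\lambda)):M^k\in X_k\}$ lies automatically inside $\Gr(T(\lambda,w))$, and combined with the observation that every $M\in\Gr(T(\lambda,w))$ has $M^k\in\Gr(T(\lambda^k,w))$, it coincides with $\{M\in\Gr(T(\lambda,w)):M^k\in X_k\}$; taking closures yields $Z^{\mathrm{full}}(X_1,\ldots,X_r)=Z(X_1,\ldots,X_r)$. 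The crystal isomorphism claim then follows because restricting the isomorphism of Theorem \ref{thm:components} along the compatible inclusions $B_w(\lambda)\subset B(\lambda)$ and $\Irr\Gr(T(\lambda,w))\subset\Irr\Gr(T(\lambda))$ gives precisely the desired bijection.

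The main conceptual point is recognizing that the Demazure compatibility (Theorem \ref{thm:DemCrystalGeometric}, part 3) is what forces the geometric containment $Z^{\mathrm{full}}\subset\Gr(T(\lambda,w))$, since the naive pointwise converse ``all subquotients $M^k\in\Gr(T(\lambda^k,w))$ implies $M\in\Gr(T(\lambda,w))$'' need not hold for an arbitrary $M\in\Gr(T(\lambda))$. Once one extracts the containment from the crystal side, the rest of the argument is a matter of matching closures.
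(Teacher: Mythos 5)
Your proposal is correct and follows essentially the same route as the paper: the paper's own argument consists precisely of the three compatibility observations preceding the statement (the inclusion $\Irr \Gr(T(\lambda,w)) \subset \Irr \Gr(T(\lambda))$ matching $B_w(\lambda) \subset B(\lambda)$, the embedding of $B_w(\lambda)$ into the tensor product of Demazure crystals, and the behaviour of the subquotients $M^k$), combined with \Cref{thm:components}. Your write-up simply fills in the details the paper leaves implicit, and your identification of the containment $Z^{\mathrm{full}}(X_1,\dots,X_r) \subset \Gr(T(\lambda,w))$ via the crystal-side correspondence as the key point is exactly the right way to make the paper's ``combining these observations'' precise.
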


\section{The bijection}
\label{sec:bijection}

\subsection{From RPPs to components}
\label{subsec:rppstocomps}

Fix a dominant minsucule $ w \in W $ with witness $ \lambda \in P_+ $.  Recall that by \Cref{th:GrHw}, $ \Gr(\bC H(w)) $ is the finite set of order ideals of $ H(w)$.

Let $ \Phi \in \Rpp(w,n) $.  Let $ \Phi \mapsto (\phi^1, \dots, \phi^n) $ be the decomposition into order ideals given by \cref{eq:gravsort}.  Define
$$
Z(\Phi)^o :=  \{ M \in \bC H(w)^{\oplus n} : M^k = \bC \phi^k \text{ for $ k = 1, \dots, n $ } \} 
$$
and let $ Z(\Phi) $ be its closure.

We will now apply \Cref{thm:componentsDem} to $T(n\lambda, w)$, with $ r = n $ and each $ \lambda^k = \lambda $.  Each $ T(\lambda^k, w) = \bC H(w) $ (by \Cref{thm:projcover}) and each $ \Gr(T(\lambda^k, w)) $ is a finite set in bijection with the set of order ideals $ J(H(w))$ (by \Cref{th:GrHw}). With the aid of \Cref{thm:gravsort}, \Cref{thm:componentsDem} translates into the following statement.
\begin{theorem} \label{thm:RPPtoIrr}
	For each $ \Phi \in \Rpp(w,n) $, $ Z(\Phi) $ is an irreducible component of $ Gr(\bC H(w)^{\oplus n}) $ and $\Phi \mapsto Z(\Phi)$ gives the crystal isomorphism $ \Rpp(w,n) \cong \Irr \Gr(\bC H(w)^{\oplus n})$.
\end{theorem}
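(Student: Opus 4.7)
The plan is to reduce the statement to a direct application of \Cref{thm:componentsDem}, using the special structure of $T(\lambda,w) = \bC H(w)$ and the identification of submodules with order ideals.

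First I would set the stage by specializing \Cref{thm:componentsDem} to our situation: take $r = n$ and $\lambda^k = \lambda$ for each $k$, so that $T(n\lambda, w) = \bC H(w)^{\oplus n}$ by \Cref{thm:projcover}. That theorem then produces, for each tuple $(X_1,\dots,X_n) \in \Irr \Gr(\bC H(w))^n$ whose tensor $X_1 \otimes \cdots \otimes X_n$ lies in the image of the embedding $B_w(n\lambda) \hookrightarrow B_w(\lambda)^{\otimes n}$, an irreducible component $Z(X_1,\dots,X_n)$ of $\Gr(\bC H(w)^{\oplus n})$; moreover the assignment $(X_1,\dots,X_n) \mapsto Z(X_1,\dots,X_n)$ realizes the crystal isomorphism $B_w(n\lambda) \cong \Irr \Gr(\bC H(w)^{\oplus n})$.

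Next I would translate this into the combinatorial language. By \Cref{th:GrHw}, $\Gr(\bC H(w))$ is the finite set of order ideals of $H(w)$, so each irreducible component is a single point $\bC\phi^k$ corresponding to some $\phi^k \in J(H(w))$. Under this identification, the indexing set for components of $\Gr(\bC H(w))^n$ is $J(H(w))^n$. By \Cref{thm:gravsort}, the subset of $J(H(w))^n \cong B_w(\lambda)^{\otimes n}$ which lies in the image of $B_w(n\lambda)$ is precisely the set of increasing chains of order ideals, which \Cref{lem:gravsort} identifies with $\Rpp(w,n)$ via $\Phi \mapsto (\phi^1,\dots,\phi^n)$. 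So the indexing set of \Cref{thm:componentsDem} matches $\Rpp(w,n)$ exactly.

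Finally I would match the geometric description. In our setting the condition ``$M^k \in X_k$'' from \Cref{thm:componentsDem} becomes the equation $M^k = \bC\phi^k$, because the component $X_k$ of the finite variety $\Gr(\bC H(w))$ is a single point. Consequently the open subset appearing in \Cref{thm:componentsDem} coincides with $Z(\Phi)^o$ as defined in \cref{subsec:rppstocomps}, and their closures agree. Transporting the crystal isomorphism from \Cref{thm:componentsDem} through these identifications yields the desired bijection $\Phi \mapsto Z(\Phi)$. The only real subtlety to check is that the $\Cx$-fixed-point decomposition used in \cref{sec:compfilt} to define $M^k$ (from the ambient filtration $M^{\le k}$) agrees with the notation $M^k = \bC\phi^k$ in the present section — but this is immediate from the construction since $\bC H(w)^{\oplus n} = \bigoplus_k \bC H(w)$ canonically and $M^k$ is by definition the image of $M^{\le k}$ in the $k$th summand. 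This is really the only bookkeeping step; everything else is a direct citation.
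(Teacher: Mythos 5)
Your proposal is correct and follows essentially the same route as the paper: the paper likewise obtains \Cref{thm:RPPtoIrr} as a direct translation of \Cref{thm:componentsDem} with $r=n$ and $\lambda^k=\lambda$, using \Cref{thm:projcover} to identify $T(\lambda,w)$ with $\bC H(w)$, \Cref{th:GrHw} to identify $\Gr(\bC H(w))$ with $J(H(w))$, and \Cref{thm:gravsort} to match the image of $B_w(n\lambda)$ with increasing chains of order ideals, i.e.\ with $\Rpp(w,n)$. Your bookkeeping remark about $M^k$ being the projection of $M^{\le k}$ to the $k$th summand is also consistent with the definition in \cref{sec:compfilt}.
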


\begin{remark}
Here is another perspective on this result. We have a bijection between $ \Gr(\bC H(w)^{\oplus n})^{\Cx}$ and $ J(H(w))^n $ by \cref{eq:fixedpoints2}.  For each of these fixed points, we can take its attracting set.  \Cref{thm:RPPtoIrr} tells us those fixed points which come from RPPs are exactly those fixed points whose attracting sets are dense in components. 
\end{remark} 

Recall that if $\Phi \in \Rpp(w,n) $, then by \Cref{thm:gravsort}, $ \phi^k  \subseteq \phi^{k+1} $ for all $ k $, and so $ \bC \phi^k \subseteq  \bC \phi^{k+1} $ is a submodule, for all $ k$.  
\Cref{thm:RPPtoIrr} thus has the following surprising consequence. Consider the locus of those $ M$ with increasing subquotients,
$$ 
U := \{ M \in Gr(\bC H(w)^{\oplus n}) :  M^k \subseteq M^{k+1} , \text{ for all $ k $} \}
$$
\begin{corollary}
  $ U $ is dense subset of $ Gr(\bC H(w)^{\oplus n}) $.
\end{corollary}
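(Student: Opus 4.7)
The plan is to show that $U$ contains a dense open subset of each irreducible component of $\Gr(\bC H(w)^{\oplus n})$, from which density follows immediately. The key input is \Cref{thm:RPPtoIrr}, which identifies the irreducible components with the sets $Z(\Phi)$ indexed by $\Phi \in \Rpp(w,n)$.

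First, I would observe that every irreducible component of $\Gr(\bC H(w)^{\oplus n})$ has the form $Z(\Phi)$ for some $\Phi \in \Rpp(w,n)$, since \Cref{thm:RPPtoIrr} provides a bijection $\Phi \mapsto Z(\Phi)$ between $\Rpp(w,n)$ and $\Irr \Gr(\bC H(w)^{\oplus n})$. By definition, $Z(\Phi) = \overline{Z(\Phi)^o}$, so $Z(\Phi)^o$ is a (nonempty) dense subset of $Z(\Phi)$.

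Next, I would verify that $Z(\Phi)^o \subseteq U$. For any $M \in Z(\Phi)^o$, the subquotients of the natural filtration satisfy $M^k = \bC \phi^k$ where $(\phi^1, \dots, \phi^n)$ is the increasing chain of order ideals associated to $\Phi$ via \cref{eq:gravsort}. Since $\Phi \in \Rpp(w,n)$, \Cref{lem:gravsort} gives $\phi^k \subseteq \phi^{k+1}$ for all $k$, and hence $M^k = \bC \phi^k \subseteq \bC \phi^{k+1} = M^{k+1}$, which is exactly the defining condition of $U$.

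Combining these two observations, $U$ contains the union $\bigsqcup_{\Phi \in \Rpp(w,n)} Z(\Phi)^o$. This union meets every irreducible component in a dense subset, so its closure equals the whole variety $\Gr(\bC H(w)^{\oplus n})$. Therefore $U$ is dense, as claimed. There is no real obstacle here: the entire content is bundled into \Cref{thm:RPPtoIrr} and the increasing-chain property from \Cref{thm:gravsort}; the corollary is simply the observation that the open stratum of each component already lies in $U$.
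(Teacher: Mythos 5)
Your proof is correct and follows exactly the route the paper intends: the components are the closures of the loci $Z(\Phi)^o$ by \Cref{thm:RPPtoIrr}, and each $Z(\Phi)^o$ lies in $U$ because the associated chain of order ideals is increasing by \Cref{thm:gravsort}. Nothing to add.
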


We are now ready to prove that the map $ Z \rightarrow \Phi_Z $ defined in \cref{sec:nilp} gives a bijection between $ \Irr \Gr(\bC H(w)^{\oplus n}) \rightarrow \Rpp(w,n) $, proving \Cref{thm:mainprecise}.
As we now have a bijection in the opposite direction, in order to prove \Cref{thm:mainprecise}, it suffices to show that if $ \Phi \in \Rpp(w,n) $, then $ \Phi_{Z(\Phi)} = \Phi $.  Now recall that $ \Phi_Z = \Phi_M $ for a general point $ M \in Z $, and $ \sum_{r \le s} \Phi_M(x_i^r) = \dim (M_i \cap \ker A_i^s) $. Thus, we must prove that for all $ M \in Z(\Phi)^o $, for all $ i \in I $ and for all $ s \ge 0 $, we have
\begin{align} \label{eq:dimeq2}
	\dim (M_i \cap \ker A_i^s) & =\sum_{r \leq s} \Phi(x_i^r).
\end{align}

Note that $ Z(\Phi)^o \subset U $. By the definition of $ Z(\Phi)^o $ the subquotient $ M^k $ is determined by $ \phi^k $ and a small calculation shows that
\begin{align*}
\sum_{r \leq s} \Phi(x_i^r) &= \sum_{k=0}^n \dim (M_i^k \cap \ker A_i^s)
\end{align*}
Therefore to establish \cref{eq:dimeq2} for all $ M \in Z(\Phi)^o $, we must prove the following. 
\begin{lemma} \label{th:MMk}
	For all $ M \in U $, we have
	$$
\dim (M_i \cap \ker A_i^s) = \sum_{k=0}^n \dim (M_i^k \cap \ker A_i^s)
$$
\end{lemma}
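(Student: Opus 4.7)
The plan is to prove the lemma by induction on $k$, establishing the stronger statement that $A_i|_{M^{\le k}_i}$ has Jordan type $(d_1, \dots, d_k)$ as a partition (unsorted), where $d_j := |\phi^j_i|$ and $\phi^j$ is the order ideal with $M^j = \bC\phi^j$. The lemma then follows from this at $k = n$ together with the standard formula $\dim \ker A^s = \sum_l \min(s, b_l)$ for Jordan block sizes $(b_l)$, combined with $\dim(M^k_i \cap \ker A_i^s) = \min(s, d_k)$ since $M^k_i$ is a single Jordan block of size $d_k$.

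The base case $k = 1$ is immediate: $M^{\le 1} = \bC \phi^1$ gives $M^{\le 1}_i = \Span(x_i^1, \dots, x_i^{d_1})$, a single Jordan block of size $d_1$. For the inductive step I consider the short exact sequence of $A_i$-modules
$$0 \to M^{\le k-1}_i \to M^{\le k}_i \to M^k_i \to 0,$$
in which $M^k_i \cong \bC\phi^k_i$ is a single Jordan block of size $d_k$. It suffices to split this sequence as $A_i$-modules, for then the Jordan types add to give $(d_1, \dots, d_{k-1}) \sqcup (d_k) = (d_1, \dots, d_k)$. A splitting is equivalent to finding a lift $v \in M^{\le k}_i$ of the top generator $x_i^{d_k, k} \in M^k_i$ satisfying $A_i^{d_k} v = 0$.

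Any lift $v$ decomposes as $v_{[k]} + v_{[<k]}$ with $v_{[k]} \in \Span(x_i^1, \dots, x_i^{d_k})$ and $v_{[<k]} \in \bC H(w)^{\oplus k-1}_i$, so $A_i^{d_k} v = A_i^{d_k} v_{[<k]}$ lies in $\bC H(w)^{\oplus k-1}_i \cap M = M^{\le k-1}_i$. By the induction hypothesis, $A_i^{d_{k-1}}$ annihilates $M^{\le k-1}_i$, and since $d_{k-1} \le d_k$ (from the increasing-chain condition $\phi^{k-1} \subseteq \phi^k$ defining $M \in U$), so does $A_i^{d_k}$. Hence modifications $v \mapsto v + u$ with $u \in M^{\le k-1}_i$ do not change $A_i^{d_k} v$; this obstruction is a well-defined element of $M^{\le k-1}_i$ depending only on the class $x_i^{d_k, k} \in M^k_i$.

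The main obstacle is showing this obstruction actually vanishes. The $A_i$-invariance of $M_i$ alone is insufficient, as one can exhibit $A_i$-invariant $M_i$ with bad Jordan type; one must invoke the full $\Pi$-module structure. Since $\tilde A_i \in \Pi$ factors as a composition of arrows through neighboring vertices (dictated by covering relations in the heap $H(w)$), the action of $\tilde A_i^{d_k}$ on $M$ is constrained by the $\Pi$-module data at those neighbors, which in turn is controlled by the order-ideal structure of the various $\phi^j$ together with the increasing-chain hypothesis. I expect a case analysis along the lines of Stembridge's structural results on minuscule heaps (alluded to in the introduction) to provide the needed vanishing; alternatively, one might argue using the socle-filtration description of $\Phi_M$ from Theorem~\ref{thm:SDPhi}.
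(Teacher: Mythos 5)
Your reduction is sound as far as it goes: the identity in the lemma is equivalent to the assertion that $A_i|_{M_i}$ has Jordan type $(d_1,\dots,d_n)$ with $d_k=\dim M^k_i$, and your inductive splitting criterion --- that the top generator of $M^k_i$ admits a lift $v\in M^{\le k}_i$ with $A_i^{d_k}v=0$, the obstruction being a well-defined element of $M^{\le k-1}_i$ because $d_{k-1}\le d_k$ forces $A_i^{d_k}$ to kill $M^{\le k-1}_i$ by induction --- is indeed equivalent to that assertion. This is essentially the same bookkeeping the paper carries out in \Cref{lem:c2dim}, phrased in terms of splitting a short exact sequence of $\bC[A_i]$-modules rather than in terms of the condition $C2$.

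However, the proposal stops exactly where the real work begins. The vanishing of your obstruction is the entire mathematical content of the lemma, and you offer only the expectation that ``a case analysis along the lines of Stembridge's structural results'' will supply it. In the paper this vanishing is what the implication $C1\Rightarrow C2$ (\Cref{thm:JC}) delivers, and that implication is the most technical argument in the paper: it needs the decomposition of $H(w)$ into slant-irreducible pieces, Proctor's classification of slant-irreducible dominant minuscule heaps, the notion of an attractive vertex together with \Cref{prop:extremal} (whose type-$E$ cases are themselves only sketched), and an induction that propagates non-degeneracy from an attractive vertex toward $i$ along arrows of the doubled quiver, using the Type~1/Type~2 local structure of adjacent beads. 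You correctly observe that $A_i$-invariance of $M_i$ alone cannot suffice and that the full $\Pi$-module structure must enter, but you give no indication of how the preprojective relations actually force the obstruction to die; the alternative route you float via the socle filtration of \Cref{thm:SDPhi} does not obviously help either, since that theorem only re-expresses $\Phi_M$ and says nothing about its compatibility with the filtration by the $M^{\le k}$. As written, the proposal is a correct reformulation of the problem, not a proof of it.
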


\begin{example}\label{ex:jfilt2}
  Let $M$ be as in \Cref{ex:jfilt}. Then we see that
  $$
  \phi^1 = \begin{tikzcd}[row sep = tiny, column sep = tiny]
    & 0 \ar[dl]\ar[dr]\\
    0 \ar[dr] & & 0 \ar[dl] \\
    & 1
  \end{tikzcd},  ~
  \phi^2 = \begin{tikzcd}[row sep = tiny, column sep = tiny]
    & 0 \ar[dl]\ar[dr]\\
    0 \ar[dr] & & 1 \ar[dl] \\
    & 1
  \end{tikzcd}, ~
  \phi^3 = \begin{tikzcd}[row sep = tiny, column sep = tiny]
    &1\ar[dl]\ar[dr]\\
    1\ar[dr] & & 1\ar[dl] \\
    & 1
  \end{tikzcd}.
  $$
  In this example, \Cref{th:MMk} is nontrivial only when $i = 2, s = 1$. We verify that 
  \begin{align*}
  \dim ((\mathbb{C}\phi^1)_2 \cap \ker A_2) + \dim ((\mathbb{C}\phi^2)_2 \cap \ker A_2) + \dim ((\mathbb{C}\phi^3)_2 \cap \ker A_2) & = 1 + 1 + 1\\
& = \dim(M_2\cap \ker A_i).
  \end{align*}
  If we choose, however, an arbitrary module $M$ (not necessarily lying in some $Z(\Phi)^\circ$), then \Cref{th:MMk} can fail.  
  Indeed, keep $w$, label the standard basis vectors of $\mathbb{C}H(w)^{\oplus 2}$ as in
  $$\begin{tikzcd}[row sep = tiny, column sep = tiny]
    &\mathbb{C}^2 = \mathrm{span}(v_2^1, v_2^2)\ar[dl]\ar[dr]\\
    \mathbb{C}^2 = \mathrm{span}(u_1^1, u_1^2)\ar[dr] & & \mathbb{C}^2 = \mathrm{span}(w_1^1, w_1^2)\ar[dl] \\
    & \mathbb{C}^2 = \mathrm{span}(v_1^1, v_1^2)
  \end{tikzcd},$$
  and consider the submodule $M = \mathrm{span}(u_1^1, v_1^2 + v_2^1, v_1^1, w_1^1)$. It has 
  \begin{align*}
  M^{\leq 1} & = \mathrm{span}(u_1^1, v_1^1, w_1^1), & M^{\leq 2} & = M,\\
  M^1 & \cong M^{\leq 1}, & M^2 & = M^{\leq 2}/M^{\leq 1} \cong \mathrm{span}(\overline{v_1^2 + v_2^1}).
  \end{align*}
  This means 
  $$\dim(M_2^1\cap \ker A_2) + \dim(M_2^2\cap \ker A_2)= 1 + 1 = 2 > 1 = \dim(M_2 \cap \ker A_2).$$
\end{example}

To prove \Cref{th:MMk}, we use the following two technical conditions which a module $M$, a vertex $i \in I$, an integer $k\in [0,n] $, and an integer $s \geq 0$ might satisfy:
\begin{align}
\parbox[t]{11cm}{If there exists $m \in M_i^{\le k-1}$ for which $A_i^s(m) \not\in M_i^{\le k-2}$, then there also exists $m' \in M_i^{\le k}$ such that $A_i^s(m') \not\in M_i^{\le k-1}$.} \tag{$C1(i, k, s)$}
\end{align}
\begin{align}
\parbox[t]{11cm}{If there exists $m \in M_i^{\le k}$ for which $A_i^s(m) \neq 0$, then there also exists $m' \in M_i^{\le k}$ such that $A_i^s(m') \not\in M_i^{\le k-1}$.} \tag{$C2(i, k, s)$}
\end{align}
We write $C1$ and $C2$ to denote the above conditions for all values of $i, k, s$. It is clear that if a module satisfies $C2$, then it satisfies $C1$, but the converse is not clear. The purpose of the next subsection will be to show that any module satisfying $C1$ must also satisfy $C2$, thereby proving that the two properties are equivalent. These conditions are motivated by the following sequence of results.

\begin{lemma} \label{lem:uc1}
Any $M \in U$ satisfies $C1$.
\end{lemma}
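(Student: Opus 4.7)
The plan is to unpack $C1(i,k,s)$ down to a statement purely about components in the $k$-th copy of $\bC H(w)$, and then invoke the definition of $U$ directly.

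The starting observation is that any element $m \in M^{\le k-1}_i$ is, by definition of $M^{\le k-1} = M \cap \bC H(w)^{\oplus (k-1)}$, a tuple $m = (m_1,\dots,m_{k-1},0,\dots,0)$ with each $m_j \in \bC H(w)_i$, and then $A_i^s(m) = (A_i^s m_1,\dots,A_i^s m_{k-1},0,\dots,0)$. The condition $A_i^s(m)\notin M^{\le k-2}_i$ is equivalent to the single condition that the $(k-1)$-st coordinate is nonzero, i.e.\ $A_i^s m_{k-1}\ne 0$ in $\bC H(w)_i$. So the hypothesis of $C1(i,k,s)$ is really a statement about the image $\bar m \in M^{k-1}_i \subseteq \bC H(w)_i$ of $m$ under the canonical projection $M^{\le k-1}\twoheadrightarrow M^{k-1}$: namely, $\bar m = m_{k-1}$ and $A_i^s \bar m \ne 0$.

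Now I would use the hypothesis $M \in U$, which says exactly that $M^{k-1}\subseteq M^k$ as submodules of $\bC H(w)$. In particular, $\bar m \in M^k_i \subseteq \bC H(w)_i$. Lifting $\bar m$ via the surjection $M^{\le k}\twoheadrightarrow M^k$ produces an element $m'\in M^{\le k}_i$ of the form $m' = (m'_1,\dots,m'_{k-1},\bar m,0,\dots,0)$. Then
\[
A_i^s(m') = (A_i^s m'_1,\dots,A_i^s m'_{k-1}, A_i^s \bar m, 0,\dots,0),
\]
and since the $k$-th coordinate $A_i^s\bar m = A_i^s m_{k-1}$ is nonzero by hypothesis, $A_i^s(m')$ is not in $\bC H(w)^{\oplus(k-1)}$, hence not in $M^{\le k-1}$. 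This gives the required $m'$.

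There is no real obstacle here; the only thing to be careful about is tracking the identifications $M^k \cong M^{\le k}/M^{\le k-1} \hookrightarrow \bC H(w)$ and checking that the defining condition of $U$ in terms of these submodules of $\bC H(w)$ is precisely what is needed to promote a ``bad'' element of $M^{\le k-1}_i$ to a ``bad'' element of $M^{\le k}_i$. The subtlety is that $A_i$ itself is not a $\Pi$-module map on $\bC H(w)^{\oplus n}$, but it is the restriction to the $i$-component of the action of some element $\tilde A_i \in \Pi$, so all the submodules $M^{\le k}$, $M^{k}$, and $\bC\phi^k$ are $A_i$-invariant on their $i$-parts, which is what makes the coordinate-by-coordinate computation above legitimate.
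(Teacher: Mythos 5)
Your argument is correct and is exactly the intended one: the paper's proof of this lemma is the single line ``immediate from the definition of $U$,'' and what you have written is a careful unpacking of that observation, correctly tracking the identification of $M^{k-1}$ and $M^k$ with submodules of $\bC H(w)$ via projection to the last nonzero coordinate and using the $A_i$-invariance of $M_i$. No gaps.
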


\begin{proof}
This is immediate from the definition of $ U $.
\end{proof}

\begin{lemma} \label{lem:c1c2}
	If $ M $ satisfies $ C1 $, then it satisfies $ C2$.
\end{lemma}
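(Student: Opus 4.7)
I would proceed by induction on $k$, the case $k=0$ being vacuous. Fix $k\ge 1$ and assume $C2(i,k',s')$ for all $k'<k$ and all $s'\ge 0$. Given $m\in M_i^{\le k}$ with $A_i^s m\ne 0$, if $A_i^s m\notin M_i^{\le k-1}$ we take $m'=m$ and are done. Otherwise set $v:=A_i^s m\in M_i^{\le k-1}\setminus\{0\}$. The plan is to produce some $\tilde m\in M_i^{\le k-1}$ with $A_i^s\tilde m\ne 0$: the inductive hypothesis $C2(i,k-1,s)$ then yields $m'''\in M_i^{\le k-1}$ with $A_i^s m'''\notin M_i^{\le k-2}$, and one application of $C1(i,k,s)$ upgrades this to the required $m'\in M_i^{\le k}$ with $A_i^s m'\notin M_i^{\le k-1}$.

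When $A_i^{2s}m\ne 0$ one can simply take $\tilde m=v$. The delicate sub-case is $A_i^{2s}m=0$, in which $v$ lies in $\ker A_i^s\cap M_i^{\le k-1}$. To handle it I would run a secondary downward induction on $s$: for $s$ large enough that $A_i^s$ vanishes on all of $M_i$ the statement of $C2(i,k,s)$ is vacuous. For the inductive step, if some $\hat m\in M_i^{\le k}$ has $A_i^{s+1}\hat m\ne 0$, then $C2(i,k,s+1)$ yields $\hat m'$ with $A_i^{s+1}\hat m'\notin M_i^{\le k-1}$; since $A_i$ preserves the filtration, this forces $A_i^s\hat m'\notin M_i^{\le k-1}$ and we are done. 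Hence we may assume $A_i^{s+1}(M_i^{\le k})=0$, collapsing the problem to the configuration $A_i^s(M_i^{\le k})\subseteq\ker A_i\cap M_i^{\le k-1}$.

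The main obstacle is this final rigid configuration: iterating the contrapositive of $C1$ only produces $A_i^{\ell s}(M_i^{\le k})\subseteq M_i^{\le k-\ell}$, which at $\ell=1$ does not by itself contradict $A_i^s|_{M_i^{\le k}}\ne 0$. To bridge this gap I would exploit the structural fact that the quotient map $\bC H(w)_i^{\oplus k}/\bC H(w)_i^{\oplus k-1}\cong\bC H(w)_i$ is $A_i$-equivariant, so each graded piece $M_i^{\le j}/M_i^{\le j-1}$ embeds as an $A_i$-invariant subspace of the cyclic $\bC[A_i]$-module $\bC H(w)_i$ and hence equals some $\ker A_i^{r_j}$. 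The assumption $\phi_j^{(s)}=0$ forces $r_j\le s$ for $j\le k$, while $C1(i,j,0)$ for $s=0$ (which is non-trivial) restricts the sequence $(r_j)$. Combining this rigidity of $A_i$-invariant subspaces of the cyclic module $\bC H(w)_i$---which crucially uses that $M$ is a submodule of $\bC H(w)^{\oplus n}$ rather than an abstract filtered $\bC[A_i]$-module---with the block structure of $A_i^s$ should rule out the remaining pathological configuration and complete the induction.
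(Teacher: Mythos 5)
Your first two reductions are sound: the case $A_i^{2s}m\neq 0$, the downward induction on $s$, and the bootstrapping scheme ``find $\tilde m\in M_i^{\le k-1}$ with $A_i^s\tilde m\neq 0$, apply $C2(i,k-1,s)$, then apply $C1(i,k,s)$'' are all valid manipulations. But the remaining configuration is not merely a technical obstacle to be finessed with the rigidity of invariant subspaces of $\bC H(w)_i$: the statement you are implicitly trying to prove --- that the conditions $C1(i,k',s')$ \emph{at the single vertex $i$} suffice to deduce $C2(i,k,s)$ --- is false, even for submodules of $\bC H(w)^{\oplus n}$. The second module of \Cref{ex:jfilt2} is a counterexample. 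There, $w=s_2s_3s_1s_2$ in type $A_3$, $n=2$, and $M=\mathrm{span}(u_1^1,\,v_1^2+v_2^1,\,v_1^1,\,w_1^1)$. At $i=2$ the only non-vacuous instance of $C1(2,\cdot,\cdot)$ is $C1(2,2,0)$, which holds because $v_1^2+v_2^1\in M_2^{\le 2}\setminus M_2^{\le 1}$; yet $C2(2,2,1)$ fails, since $A_2(v_1^2+v_2^1)=v_1^1\neq 0$ while $A_2(M_2)=\mathrm{span}(v_1^1)\subseteq M_2^{\le 1}$. Note that the graded pieces here are $M_2^1=M_2^2=\ker A_2$, i.e.\ exactly the same sequence $(r_j)$ as for the honest module $\bC\phi^1\oplus\bC\phi^2$ with $\phi^1=\phi^2$ the minimal order ideal, which does satisfy $C2$. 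So the data you propose to exploit --- the $r_j$'s, the invariant-subspace structure of $\bC H(w)_i$, and $C1(i,\cdot,\cdot)$ --- cannot distinguish the good case from the bad one. What actually fails for this $M$ is $C1$ at the \emph{neighbouring} vertex $1$ (namely $C1(1,2,0)$: $M_1^{\le 1}\neq 0$ but $M_1^{\le 2}=M_1^{\le 1}$), and your argument never consults $C1$ away from $i$.

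This is precisely where the paper's proof spends all of its effort, and why it looks so different from your sketch. The paper first produces an ``attractive'' vertex $i_*$ at which $C2(i_*,\cdot,\cdot)$ can be verified directly (\Cref{prop:extremal}); this already requires reducing to slant-irreducible heaps and invoking Proctor's classification, with a genuinely case-by-case analysis in type $E$. It then argues by contradiction, choosing $i$ with $C2(i,k,s)$ failing and $d(i,i_*)$ minimal, and uses an arrow $a:i\to a(i)$ of the preprojective algebra pointing toward $i_*$ to transport the offending vector to runner $a(i)$ (here the Type~1/Type~2 analysis of adjacent beads guarantees $a$ induces a map $\ker A_i^{s+1}/\ker A_i^s\to\ker A_{a(i)}^{r+1}/\ker A_{a(i)}^r$), applies $C2$ at $a(i)$, pushes back to runner $i$ along a path in $\Pi$, and only at the very last step invokes $C1(i,k-1,s)$. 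Any correct proof must involve this kind of cross-runner transfer, so the missing ingredient in your proposal is not a refinement of the linear algebra at vertex $i$ but the entire mechanism by which information at other vertices constrains $A_i|_{M_i}$.
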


\Cref{sec:c1c2} is dedicated to the proof of this lemma.

\begin{lemma} \label{lem:c2dim}
  If $M$ satisfies $C2$, then \Cref{th:MMk} holds for $M$.
\end{lemma}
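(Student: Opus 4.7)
The plan is to reduce the dimension equality to a lifting property level-by-level, and then establish the lift using $C2$.

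First, I would use dimension additivity across the filtration $0 = M^{\le 0}_i \subset M^{\le 1}_i \subset \cdots \subset M^{\le n}_i = M_i$ to rewrite
\[
\dim(M_i \cap \ker A_i^s) = \sum_{k=1}^n \dim\bigl[(M^{\le k}_i \cap \ker A_i^s)/(M^{\le k-1}_i \cap \ker A_i^s)\bigr].
\]
The natural map sending $m + (M^{\le k-1}_i \cap \ker A_i^s)$ to $m + M^{\le k-1}_i$ provides a canonical injection from each successive quotient into $M^k_i \cap \ker \bar A_i^s$ (recalling that $A_i$ preserves each $M^{\le k}_i$ because $M^{\le k}$ is a $\Pi$-submodule and $A_i$ agrees with the action of an element $\tilde A_i \in \Pi$, so it descends to the graded piece). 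Injectivity is immediate, and hence the desired identity $\dim(M_i \cap \ker A_i^s) = \sum_k \dim(M^k_i \cap \ker A_i^s)$ is equivalent to surjectivity at every level~$k$.

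Second, I would reformulate surjectivity as a lifting property: every $\bar m \in M^k_i \cap \ker \bar A_i^s$ admits a lift to $M^{\le k}_i \cap \ker A_i^s$. Concretely, given $m \in M^{\le k}_i$ with $y := A_i^s(m) \in M^{\le k-1}_i$, one must find $m'' \in M^{\le k-1}_i$ satisfying $A_i^s(m'') = y$ (so that $m - m''$ is the required lift). This amounts to the containment
\[
A_i^s(M^{\le k}_i) \cap M^{\le k-1}_i \subseteq A_i^s(M^{\le k-1}_i), \tag{$P(i,k,s)$}
\]
which I will call $P(i,k,s)$.

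Third, I would establish $P(i,k,s)$ by induction on $k$ (with the inductive hypothesis being that $P(i,j,s')$ holds for all $j<k$ and all $s'$), using $C2$ as the key tool. The base case $k=1$ is automatic since $M^{\le 0}_i = 0$. For the inductive step, given $y = A_i^s(m)$ with $y \in M^{\le k-1}_i$, I would dichotomize: if $A_i^s(M^{\le k}_i) \subseteq M^{\le k-1}_i$, then $C2(i,k,s)$ forces $A_i^s(M^{\le k}_i) = 0$, hence $y=0 \in A_i^s(M^{\le k-1}_i)$; otherwise I would track the orbit $m, A_i(m), A_i^2(m), \ldots, A_i^s(m) = y$ through the filtration, locate the first index at which this orbit descends from level $k$ into a strictly lower level, and apply $C2$ at that level together with the inductive hypothesis $P(i,j,s')$ for $j < k$ to rewrite $y$ as the $A_i^s$-image of an element in $M^{\le k-1}_i$.

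The main obstacle will be the third step. The condition $C2$ is an ``existence'' statement (guaranteeing \emph{some} element of $M^{\le k}_i$ maps outside $M^{\le k-1}_i$) rather than a uniform lifting statement, so bridging from $C2$ to the pointwise containment $P$ is delicate: one must invoke $C2$ at multiple levels $j \le k$ and multiple powers simultaneously, and carefully interface with the inductive hypothesis at strictly smaller $j$. I expect the cleanest formulation to involve a secondary induction on $s$ to propagate the pointwise lifting along the descending trajectory of $m$ under $A_i$.
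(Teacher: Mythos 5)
Your first two steps (additivity of $\dim(\cdot\cap\ker A_i^s)$ along the filtration, identification of the defect with a failure of surjectivity onto $M_i^k\cap\ker\bar A_i^s$, and the reformulation as the containment $P(i,k,s)$) are correct and agree with how the paper opens its proof: the paper likewise reduces to showing that every $\overline m\in M_i^k$ killed by $\bar A_i^s$ lifts to an element of $M_i^{\le k}$ killed by $A_i^s$. But the entire content of the lemma is your third step, and there you have a genuine gap: the plan as stated does not close, and you are missing the structural fact that makes the lifting possible. The key point is that the projection $\bC H(w)^{\oplus k}\to\bC H(w)$ onto the $k$th summand embeds $M^k$ as a submodule of a \emph{single} copy of $\bC H(w)$; since $A_i$ on $\bC H(w)_i$ is a regular nilpotent (one Jordan block), its restriction to the invariant subspace $M_i^k$ is again a single Jordan block. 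Consequently $\dim\ker\bar A_i^s$ in $M_i^k$ is at most $s$, and the single witness $m''$ produced by $C2$ (with $A_i^s(m'')\notin M_i^{\le k-1}$, i.e.\ $\bar A_i^s(\overline{m''})\ne 0$) has an $\bar A_i$-orbit $\{\bar A_i^q(\overline{m''})\}$ whose last $s$ terms span all of $\ker\bar A_i^s$. The paper first corrects $m''$ so that $A_i^{s'}(m'')=0$ upstairs, where $s'>s$ is minimal with $\bar A_i^{s'}(\overline{m''})=0$ — this uses that lifting already holds for the larger power $s'$, so the argument is organized as a maximality/downward induction on the \emph{power}, not an induction on the level $k$ — and then lifts an arbitrary $\overline m\in\ker\bar A_i^s$ by taking the same linear combination of the $A_i^q(m'')$ upstairs, which lies in $\ker A_i^s$ because $q\ge s'-s$.

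Your proposed inductive step cannot be repaired as sketched. In the ``otherwise'' branch, knowing that the orbit of $m$ first descends at step $t_0$ (so $A_i^{t_0}(m)\in M_i^{\le k-1}$) only yields $y=A_i^s(m)\in A_i^{\,s-t_0}(M_i^{\le k-1})$, which is strictly weaker than the required $y\in A_i^{s}(M_i^{\le k-1})$; neither $C2$ nor the inductive hypothesis $P(i,j,s')$ at levels $j<k$ lets you raise the exponent back from $s-t_0$ to $s$, because that would require producing an $A_i^{t_0}$-preimage inside $M_i^{\le k-1}$, which is exactly the kind of surjectivity statement you are trying to prove. Without the single-Jordan-block/cyclic-orbit observation, the existential statement $C2$ (one element escaping $M_i^{\le k-1}$) gives you no control over the \emph{particular} kernel element $\overline m$ you need to lift. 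You correctly flagged this step as the main obstacle, but the obstacle is essential rather than technical, and the route around it is the one described above.
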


\begin{proof}
	By lifting bases from the subquotients $ M_i^k $, it is enough to show that any $\overline{m} \in M_i^k$ such that $A_i^k(\overline{m}) = 0$, has a lift $m \in M_i^{\le k}$ such that $A_i^k(m) = 0$.
	
	Suppose for contradiction that this fails for some $i$. Choose the maximal $k$ such that it fails, i.e. there exists $\overline{m} \in M_i^k$ for which $A_i^k(\overline{m}) = 0$, but has no lift $m$ such that $A_i^k(m) = 0$. Let $m'$ be some lift; then by assumption, $A_i^k(m') \neq 0$. So by (C2), there exists some $m'' \in M_i^{\le k}$ such that $A_i^k(\overline{m''}) \neq 0$. Let $s > k$ be the minimum number such that $A_i^s(\overline{m''}) = 0$. By maximality of $k$, there is some lift $m'''$ of $\overline{m''}$ with $A_i^s(m''') = 0$. Replacing $m''$ by $m'''$ if necessary, we can assume without loss of generality that $A_i^s(m'') = 0$ (but $A_i^{s-1}(\overline{m''}) \neq 0$).
	
	The fact that $A_i^{s-1}(\overline{m''}) \neq 0$ means that the set of $k$ vectors $\{A_i^{q}(\overline{m''})\}_{q=s-k}^{s-1}$ span the entire space $\ker \overline{A_i^k} \subset M_i^k$ (this can be seen by an easy induction). This means there is some linear combination $m$ of these vectors $A_i^q(m'')$ which is a lift of $\overline{m}$. But since $m \in \text{span} \{A_i^{q}(m'')\}_{q=s-1}^{s-k}$, we see that $m \in \ker A_i^k$, which is a contradiction, since we assumed no such lift of $\overline{m}$ existed.
\end{proof}

These three lemmas, \Cref{lem:uc1,lem:c1c2,lem:c2dim}, complete the proof of \Cref{th:MMk} and the proof of \Cref{thm:mainprecise}.

\subsection{C1 implies C2}\label{sec:c1c2}
Throughout this section fix a dominant minuscule heap $ H(w) $ and $ M \in Gr(\bC H(w)^{\oplus n})$.

We now show in this section that if $M$ satisfies $C1$, it must also satisfy $C2$; this is the most technical part of our argument, since it relies on the underlying combinatorics of the heap corresponding to $\bC H(w)$. We begin with some combinatorial definitions which will be useful for the proof.

\begin{definition}
Fix a single arrow $i \to i'$ in the quiver, and fix neighbouring beads $a > b$ on the runner $i$. By Proposition 2.5 in \cite{St01}, the local behaviour of $ \bC H(w)$ is given by one of the following figures in \Cref{fig:types}. We use solid arrows to denote arrows pointing from vertex $i$ to vertex $i'$, and dotted arrows for all the others. Solid dots represent beads adjacent to either $a$ or $b$ which lie on a runner which is not $i$.  The adjacent beads $a, b$ are said to be of Type $1$, $2$, and $3$ with respect to the arrow $i \to i'$ as specified in \Cref{fig:types}.
\begin{figure}[ht]
\caption{Neighbouring vertices $a, b$ of Type 1, 2, and 3 respectively, with respect to $i \to i'$. In these figures, $a$ and $b$ lie on runner $i$ while $i'$ is the label of the neighbouring runner pictured to the right of $i$.\label{fig:types}}
\[
\begin{tikzcd}[column sep = small, row sep = small]
& a \ar[rd] \ar[ld, dashed] & \\
\bullet \ar[rd, dashed] & & \bullet \ar[ld, dashed] \\
& b & \\
& i \ar[r, maps to] & i'
\end{tikzcd}
\hspace{1.5cm}
\begin{tikzcd}[column sep = small, row sep = small]
& a \ar[rd] & \\
& & \bullet \\
& & \bullet \ar[ld, dashed] \\
& b & \\
& i \ar[r, maps to] & i' 
\end{tikzcd}
\hspace{1.5cm}
\begin{tikzcd}[column sep = small, row sep = small]
& a \ar[ld, dashed] & \\
\bullet & & \\
\bullet \ar[rd, dashed] & &  \\
& b & \\
& i \ar[r, maps to] & i'
\end{tikzcd}
\]
\end{figure}
\end{definition}

With this definition in hand, we now define a special property which a vertex $i$ might have. For this definition, we will use the notation $d(i, i')$ to denote the distance between any two vertices in $I$. Note also that for any $i, i' \in I$ distinct, there exists some unique $i''$ adjacent to $i'$ such that $d(i, i'') < d(i, i')$. We call $i''$ the \textbf{$i$-toward vertex adjacent to $i'$}. 

\begin{definition}
We say that $i \in I$ is \emph{attractive} if the following is true: given any $i' \in I$ and letting $i''$ be the $i$-toward vertex adjacent to $i'$, any two adjacent beads $a > b$ on runner $i'$ are of Type 1 or Type 2 with respect to $i' \to i''$.
\end{definition}

\begin{lemma}\label{lem:onebead}
	If there is only one bead on the runner $ i $, then if property $C1(i, k, s)$ holds for $M$ and some choice of $k, s$, then $M$ also satisfies $C2(i, k, s)$.
\end{lemma}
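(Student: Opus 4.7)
The plan is to exploit the fact that when runner $i$ has only one bead, the operator $A_i$ is identically zero. Indeed, since $H(w)_i = \{x_i^1\}$ is a singleton, $\bC H(w)_i$ is one-dimensional, and the definition of $A_i$ given in \cref{sec:nilp} prescribes $A_i(x_i^1) = 0$. So $A_i$ vanishes on $\bC H(w)_i$, hence on $\bC H(w)_i^{\oplus n}$, and in particular $A_i^s = 0$ for every $s \geq 1$.

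For $s \geq 1$ this immediately gives $C2(i,k,s)$: the hypothesis---that some $m \in M_i^{\leq k}$ has $A_i^s(m) \neq 0$---cannot hold, so $C2(i,k,s)$ is vacuously true and no appeal to $C1$ is needed. The only remaining case is $s = 0$, where $A_i^0 = \operatorname{id}$ and $C2(i,k,0)$ reduces to the filtration statement ``$M_i^{\leq k} \neq 0 \Rightarrow M_i^{\leq k} \neq M_i^{\leq k-1}$''. This I would handle by descent, iterating the contrapositive of $C1(i,k',0)$ for $k' = k, k-1, \dots$: if $M_i^{\leq k} = M_i^{\leq k-1}$ with $M_i^{\leq k} \neq 0$, then $C1(i,k,0)$ forces $M_i^{\leq k-1} = M_i^{\leq k-2}$, and so on, until one reaches $M_i^{\leq 0} = M_i^{\leq -1} = 0$, contradicting $M_i^{\leq k} \neq 0$.

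The main ``obstacle'' is essentially nonexistent: the lemma collapses to a direct verification the moment one observes $A_i = 0$, the only mild subtlety being the iterated descent needed to handle the degenerate case $s = 0$.
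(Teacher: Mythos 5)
Your proof is correct and follows essentially the same route as the paper: both reduce to $s=0$ by observing that $A_i=0$ when runner $i$ carries a single bead, and both then use $C1(i,k',0)$ for the successive indices $k'\le k$ to propagate nontriviality of the subquotients (the paper phrases this as the dimension jumps $\dim M_i^{\le q}/M_i^{\le q-1}$ being non-decreasing and bounded by $1$, which is the same descent you run via the contrapositive). Note that, like the paper, you implicitly read the hypothesis as $C1$ holding at all the relevant indices rather than at the single pair $(k,s)$, which is how the lemma is actually applied later.
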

\begin{proof}
First note that the statement only needs to be checked for $ s = 0 $, since $ A_i^s = 0 $ for $ s > 0 $.	
	
If $ C1(i, k, s)$ holds, we know that $ \dim M_i^{\le q}/ M_i^{\le q-1} $ is non-decreasing as a function of $q \in {1, \dots, k}$, and since there is only one bead on runner $ i $, it is always at most $1$.  Choose $ k' \leq k$ maximal such that $ \dim M_i^{\leq k'}/ M_i^{\leq k' - 1} = 0 $.  This implies that $ M_i^{\leq k'} = 0 $, since $ \dim M_i^{\leq k'} = \sum_{r \le k'}  \dim M_i^{\leq r}/ M_i^{\leq r-1} $.

Now, let $  m \in M_i^{\le k} $, $ m \ne 0 $.  This means $ k > k' $ and thus $  \dim M_i^{\le k}/ M_i^{\le k-1} = 1 $.  Hence there exists $ m' \in M_i^{\le k} $ with $ m' \notin M_i^{\le k-1} $ as desired.
\end{proof}

In the remainder of this section, we will make use of the notion of ``slant-irreducibility" of a heap, as discussed in \cite{St01} and \cite{P992}. We briefly review the setup now.

\begin{definition}
Given a dominant minuscule heap $H$ relative to a connected Dynkin diagram, let $T$ be the poset of elements of $H$ consisting of the top bead on each runner. It is shown in \cite{St01} that $T$ is a tree; we call $T$ the \emph{top tree} of $H$.
\end{definition}

\begin{definition}\label{def:slantsum}
Let $H_1$ and $H_2$ be heaps of dominant minuscule elements supported on two disjoint (but connected) simply-laced Dynkin diagrams.  Let $p$ be a bead in the top tree of $H_1$ whose label $i$ occurs only once in $H_1$ (i.e. $p$ is the only bead on the $i$th runner). Let $q$ be the maximum element of $H_2$, and let $j$ be the vertex over which it lies. Then Proposition 3.1 in \cite{St01} gives that the labelled poset obtained from $H_1 \cup H_2$ by adding the covering relation $q < p$ is a dominant minuscule heap relative to any Dynkin diagram obtained by taking the union of the two original diagrams and adding an edge between $i$ and $j$. We call this new poset the \emph{slant sum} of $H_1$ and $H_2$.

If $H$ is a dominant minuscule heap which cannot be written as a slant sum of two other such heaps, we say $H$ is \emph{slant irreducible}.
\end{definition}

The following is then proved in \cite{P992} and discussed in the context of dominant minuscule heaps in \cite{St01}.
\begin{lemma}
Any dominant minuscule heap $H$ can be written as an iterated slant sum of slant-irreducible dominant minuscule heaps.
\end{lemma}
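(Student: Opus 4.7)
The plan is to prove the decomposition statement by a straightforward induction on the cardinality of the underlying heap, using the definition of slant sum as the inductive cut.

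First, I would set up the induction on $|H|$. The base case is when $|H| \leq 1$, in which case $H$ is trivially slant irreducible (there is no nontrivial way to write a singleton or empty heap as a slant sum, since both summands in \Cref{def:slantsum} are required to be nonempty in order to produce the new covering relation $q<p$). The inductive hypothesis is that any dominant minuscule heap $H'$ with $|H'|<|H|$ can be written as an iterated slant sum of slant-irreducible dominant minuscule heaps.

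For the inductive step, if $H$ is itself slant irreducible, then the trivial one-term ``iterated slant sum'' expresses $H$ as desired and we are done. Otherwise, by definition of slant-irreducibility there exist dominant minuscule heaps $H_1, H_2$ (supported on disjoint simply-laced Dynkin subdiagrams, with a chosen pair of beads $p \in H_1$ and $q \in H_2$ as in \Cref{def:slantsum}) whose slant sum is $H$. Since both $H_1$ and $H_2$ are nonempty (they each contain the distinguished bead used to form the slant sum), we have $|H_1|,|H_2| < |H|$, so the inductive hypothesis applies and each of $H_1$, $H_2$ decomposes as an iterated slant sum of slant-irreducible dominant minuscule heaps. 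Substituting these decompositions into the outer slant-sum expression $H = H_1 \text{ slant-summed with } H_2$ then yields the desired iterated slant-sum decomposition of $H$.

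The main technical point to verify is simply that each step of the decomposition produces genuine dominant minuscule heaps (so that the inductive hypothesis is actually applicable); but this is already guaranteed by the definition of slant sum, which is symmetric in the sense that the summands in a slant sum decomposition of a dominant minuscule heap are themselves dominant minuscule heaps on the two connected components obtained by deleting the chosen edge of the Dynkin diagram. Thus the only potential obstacle---ensuring that the recursive procedure stays within the class of dominant minuscule heaps---is handled directly by \Cref{def:slantsum}, and the induction terminates because at every non-trivial step both pieces are strictly smaller than $H$.
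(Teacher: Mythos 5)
Your proof is correct. Note that the paper does not actually prove this lemma at all --- it is quoted from Proctor \cite{P992} (as discussed in \cite{St01}) --- so any self-contained argument is ``different from the paper'' by default. What you supply is the natural strong induction on $|H|$, and it goes through essentially for free given how the paper defines slant-irreducibility: a heap that is \emph{not} slant irreducible is, by that very definition, a slant sum of two \emph{dominant minuscule} heaps $H_1$ and $H_2$, each nonempty (they contain the distinguished beads $p$ and $q$), hence each of strictly smaller cardinality, so the inductive hypothesis applies and the decompositions can be substituted back in. The one point you correctly flag --- that the summands are again dominant minuscule heaps, so the induction stays inside the right class --- is built into the definition rather than something needing verification, exactly as you say. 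A very minor caveat worth a sentence if you wrote this up: the definitions of slant sum and top tree in the paper are phrased for heaps over \emph{connected} Dynkin diagrams, so for a semisimple (disconnected) diagram one should first split $H$ into the disjoint sub-heaps over the connected components (this splitting is not itself a slant sum) and then run your induction on each piece; in the paper's application the argument is localized to a single connected component anyway, so nothing is lost.
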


This will be an advantage going forward, since slant-irreducible dominant minuscule heaps over connected simply-laced Dynkin diagrams were fully classified by Proctor in \cite{P992}.

\begin{proposition}\label{prop:extremal}
For $H(w)$ a slant-irreducible dominant minuscule heap, there always exists some attractive vertex $i$ such that if $M$ satisfies $C1$, then $C2(i, k, s)$ holds for all $k, s$ and for this specific choice of $i$.
\end{proposition}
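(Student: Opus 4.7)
The plan is to combine Proctor's classification of slant-irreducible dominant minuscule heaps with a propagation argument that uses attractiveness to move ``dimension jumps'' across the heap.

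First, I would use Proctor's explicit classification \cite{P992} to locate an attractive vertex in each slant-irreducible $H(w)$. In the rectangular type $A$ families, either endpoint of the Dynkin path works. In the type $D$ families and in the exceptional $E_6, E_7$ cases, I would take $i$ to be the endpoint of the ``longest arm'' of the underlying Dynkin diagram; Proctor's diagrams then directly exhibit that along every runner $i'$ and every pair of adjacent beads on that runner, the configuration with respect to the $i$-toward neighbour $i''$ is of Type 1 or Type 2. This is a finite, essentially visual case inspection against the entries of Proctor's list, and it is precisely what rules out any Type 3 configuration along the geodesic in $I$ from any leaf back to $i$.

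Second, having fixed such an attractive $i$, I would prove $C1 \Rightarrow C2(i,k,s)$ by contradiction. Suppose $A_i^s(M_i^{\le k}) \subseteq M_i^{\le k-1}$ while $A_i^s(M_i^{\le k}) \ne 0$. Let $k_0 \le k$ be minimal with $A_i^s(M_i^{\le k_0}) \ne 0$, and pick $m_0 \in M_i^{\le k_0}$ with $v := A_i^s(m_0) \ne 0$; by minimality $A_i^s(M_i^{\le k_0-1}) = 0$, while by the failure hypothesis $v \in M_i^{\le k-1}$. Attractiveness now drives the propagation step: the Type 1 or Type 2 configuration between adjacent beads on runner $i'$ (for $i'$ adjacent to $i$) ensures that the preprojective relation expresses the shift operator $A_{i'}$ on runner $i'$ in terms of the round trip $(i' \to i)(i \to i')$, modulo contributions through other neighbours of $i'$. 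An appropriate preprojective word $\omega$ in the doubled quiver therefore transports $v \in M_i^{\le k_0}$ to a nonzero witness on runner $i'$ which inherits the analogous vanishing-below-$k_0$ pattern on $M_{i'}$. Iterating this transport along the path in the Dynkin diagram from $i$ outward, and re-applying the minimality construction at each step, one eventually produces a configuration on some runner whose existence is flatly forbidden by $C1$, giving the desired contradiction.

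The main obstacle I anticipate is making the transport step quantitatively precise: one must verify that the preprojective word $\omega$ does not accidentally annihilate $v$ on the bead configuration and that the filtration levels pass through the transport correctly. Attractiveness is exactly what excludes the bad possibility, since Type 3 is the only local configuration under which the round trip $(i' \to i)(i \to i')$ would vanish on the $\bC H(w)$-bead structure along the path toward $i$, and by hypothesis Type 3 never arises in that direction. In the exceptional $E_6$ and $E_7$ cases additional care is needed at the trivalent vertex lying on the path from $i$ to certain leaves, but this reduces to a finite and explicit local check against the heaps listed in \cite{P992}. Once the transport step is in place, the induction against $C1$ is mechanical.
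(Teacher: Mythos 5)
There is a genuine gap in the second half of your argument. The paper's proof of this proposition rests on a very specific and much more elementary mechanism in types $A$ and $D$: by Proctor's classification, each slant-irreducible heap in those types has an \emph{extremal vertex supporting only a single bead}, so $A_i=0$ on that runner, the statement only needs to be checked for $s=0$, and \Cref{lem:onebead} disposes of it using nothing but the monotonicity of $q\mapsto\dim M_i^{\le q}/M_i^{\le q-1}$ forced by $C1$. Your proposal never uses this observation; instead you propose a propagation-by-contradiction argument that transports a witness outward from $i$ along the Dynkin diagram until ``a configuration forbidden by $C1$'' appears. But you never say what that forbidden configuration is, and there is no anchor for the iteration to terminate on: the entire purpose of \Cref{prop:extremal} is to supply the \emph{base case} at one special vertex from which \Cref{thm:JC} then propagates $C2$ inward (toward $i_*$, using that $C2$ already holds at vertices closer to $i_*$). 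Propagating outward from the special vertex, as you do, reproduces the shape of the argument in \Cref{thm:JC} while presupposing exactly the kind of known-good vertex that this proposition is meant to produce. As written, the contradiction at the end of your second step is asserted, not derived.

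Two smaller issues. First, in type $E$ you choose $i$ to be the endpoint of the longest arm, whereas the paper uses the trivalent vertex; you would need to actually verify attractiveness for your choice against Proctor's Classes 3--15, and more importantly, at whichever vertex you pick in type $E$ the runner carries several beads, so the single-bead shortcut is unavailable and the verification of $C1\Rightarrow C2(i,k,s)$ is a genuinely laborious case analysis (the paper concedes this and only sketches it). Your phrase ``finite and explicit local check'' conflates checking \emph{attractiveness} (which is indeed a visual inspection) with checking the implication $C1\Rightarrow C2$ at that vertex (which is not). Second, your claim that the preprojective relation lets you express $A_{i'}$ as the round trip $(i'\to i)(i\to i')$ modulo other neighbours is plausible but is doing all the real work in your transport step, and it is precisely the kind of statement that degenerates at the trivalent vertex; it needs a careful proof, not an appeal to Type 3 being excluded.
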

\begin{proof}
By the classification of slant-irreducible dominant minuscule heaps in \cite{P992} (translated from the language of ``$d$-complete posets" to the language of heaps by the last section of loc. cit. and \cite{P991}), we can split into cases depending on the Dynkin type. During the proof, we will make reference to the enumeration of the 15 classes of posets described in Section 7 of \cite{P992}.

\textbf{Type A.} The only class of posets which can occur here is Class 1 of \cite{P992}, an example of which is pictured in Figure \ref{fig:typedposets} for Type $A_5$. Let $i$ be any extremal vertex; say $i = 1$ for convenience. Observe that for any minuscule poset in Type A, any extremal vertex supports only a single bead, so we can apply Lemma \ref{lem:onebead}. Now note that in Type A, all neighbouring vertices on any runner are of Type 1 with respect to any arrow,  and so all vertices (including $i$) are attractive in this case.

\textbf{Type D.} Two classes of heaps can occur here: Class 2 and Class 4 (with $h = 1$) from \cite{P992}. These are pictured in Figure \ref{fig:typedposets} for Type $D_n$ where $n = 5$. For each of these classes, there exists some extremal vertex which supports only a single bead. Indeed, in Class 2, vertex $1$ has this property and is attractive, and in Class 4, either of the vertices $n-1$ or $n$ has this property and each is attractive, and so we cna once again apply Lemma \ref{lem:onebead}.

\textbf{Type E.} So far, we see that the verification of this proposition has been simple and almost type-independent. This is not the case, to our knowledge, in Type $E$. Classes 3 through 15 of \cite{P992} can occur in Type $E$. It is an easy check that in each of these cases, the trivalent vertex is always attractive; let us denote by $i_*$ its label.

What remains is to manually check that $C1$ implies $C2(i_*, k, s)$ for any $k, s$, which will complete the proof of the proposition. In contrast to the simple arguments in Types A and D explained above, checking this manually for the posets in Classes 3 through 15 of \cite{P992} is a tedious combinatorial and linear-algebraic computation. Nevertheless, it can be done by hand by contradiction, splitting into cases by considering the smallest value of $s$ for which $C2(i, k, s)$ fails. Since it uses the same techniques as we will introduce in the proof of \Cref{lem:c1c2} below
and is unenlightening (except for its indication of the complexity of the problem, which we believe is inherent to the combinatorics of minuscule posets), we leave the explicit check to the reader.\footnote{As a sample of the way the interested reader might carry out this check, we note that in the case of the poset $F(E_6, 5)$ in \cite{W03}, one way to proceed is to reduce the $i = 3$ case to the $i = 0$ case, and then make use of the sequences of arrows $0 \to 3 \to 2$, $2 \to 3 \to 4 \to 5$, $5 \to 4 \to 3 \to 0$, and $2 \to 3 \to 0$ where appropriate, splitting into cases when necessary and using the techniques introduced in the proof of \Cref{lem:c1c2}
to reason along the way. Note also that $F(E_6, 5)$ and $F(E_6, 1)$ are symmetric, so it suffices to consider only the former. In checking the result for $F(E_7, 6)$, we found it useful to use the $E_6$ result and exploit the fact that this poset looks like a gluing of $F(E_6, 1)$ and $F(E_6, 5)$.}
\end{proof}


\begin{proof}[Proof of \Cref{lem:c1c2}]
Suppose for contradiction that $M$ satisfies $C1$, but fails to satisfy $C2$.  Let $k$ be the minimal value for which $C2(i, k, s)$ fails for some $i, s$.  Without loss of generality, for the remainder of the proof we can assume $H(w)$ is slant-irreducible by replacing the original heap $H(w)$ by the slant-irreducible component of $H(w)$ containing $i$. By \Cref{prop:extremal}, there exists some attractive vertex $i_*$ in this same component for which $C2(i_*, k',s)$  hold for all $k', s$. Our assumption of slant-irreducibility poses no issue for the remainder of the proof, since from now on we will only make use of vertices between $i$ and $i_*$ and the arrows between them in the Dynkin quiver, all of which lie in the same slant-irreducible component by our assumption on $i_*$ (and since slant sum is an operation between heaps supported on disjoint subsets of $I$; see \Cref{def:slantsum}). Now Choose $i \in I$ such that $C2(i, k, s)$ fails for some $s$, and such that $d(i, i_*)$ is minimal.

Since $d(i, i_*) \geq 1$ by \Cref{prop:extremal}, there exists a choice of single arrow $a$ in the preprojective algebra for which $d(a(i), i_*) < d(i, i_*)$. (By abuse of notation, we use the same notation for the arrow $a$ of the doubled quiver as we do for the arrow $a$ as an element of the preprojective algebra). Since $C2(i, k, s)$ fails for some $s$ by assumption, let $s \geq 1$ be such that there exists some $m \in M_i^{\le k}$ for which $A_i^s(m) \neq 0$, but such that $A_i^s(M_i^{\le k}) \subset M_i^{\le k-1}.$ (Note that if $C2(i, k, s)$ fails already for $s = 0$, then this is a contradiction by the same argument as in \Cref{lem:onebead}). Let $b_s$ be the bead on runner $i$ corresponding to $\ker A_i^s \setminus \ker A_i^{s-1}$, and let $b_{s+1}$ be the runner above it, corresponding to $\ker A_i^{s+1} \setminus \ker A_i^s$. Both of these beads exist on the runner $i$ since $A_i^s(M^{\le k}) \neq 0$ for our $s \geq 1$.

By the fact that $i_*$ is attractive, we know that $b_{s}$ and $b_{s+1}$ are neighbouring vertices of Type 1 or Type 2 with respect to the arrow $a$. In particular this means $a(b_{s+1})$ is not zero, and is some bead $c_{r+1}$ on runner $a(i)$ corresponding to $\ker A_{a(i)}^{r+1} \setminus \ker A_{a(i)}^r$, for some $r \geq 0$. Algebraically, this means that $ a $ gives a morphism $ \ker A_i^{s+1}/ \ker A_i^s \rightarrow \ker A_{a(i)}^{r+1} / \ker A_{a(i)}^r $; it respects the filtration $M_i^{\leq q}$, thus $A_{a(i)}^r(a(m)) \neq 0$, but $A_{a(i)}^r(a(m)) \in M_{a(i)}^{\le k-1}$. 

Now let $l$ be the maximum value for which there exists $m' \in M_{a(i)}^{\le k}$ for which $A_{a(i)}^l(m') \neq 0$. By the above, we have that $l \geq r$. Now recall that $d(a(i), i_*) < d(i, i_*)$, so by the minimality of $ i $, $C2(a(i), k, s)$ must hold for all $s$.  Hence there exists some $m'' \in M_{a(i)}^{\le k}$ such that $A_{a(i)}^{l}(m'') \not\in M_{a(i)}^{\le k-1}$. By the maximality of $l$, we also have $A_{a(i)}^{l+1}(m'') = 0$. Let $c_{l+1}$ be the bead on runner $a(i)$ corresponding to $\ker A_{a(i)}^{l+1} \setminus \ker A_{a(i)}^{l}$.

Since $l \geq r$, we have $c_{l+1} \geq c_r$, and by the picture for neighbouring vertices of Type 1 and Type 2, we see that $c_{r+1} > b_{s}$ in the poset order on our heap. So $c_{l+1} > b_s$, which means there is some path $p$ in the preprojective algebra with $p(c_{l+1}) = b_s$. Algebraically, this means that $ p $ gives a (filtration-respecting) morphism $ \ker A_{a(i)}^{l+1}/ \ker A_{a(i)}^l \rightarrow \ker A_{i}^{s+1} / \ker A_{i}^s $. Since $A_{a(i)}^{l+1}(m'') = 0$ but $A_{a(i)}^l(m'') \not\in M_{a(i)}^{\le k-1}$, we have that $A_i^s(p(m'')) = 0$, but $A_i^{s-1}(p(m'')) \not\in M_i^{\le k-1}$.

It is clear from the fact that $A_i^{s-1}(p(m'')) \not\in M_i^{\le k-1}$ that the set of $s$ vectors $\{\overline{A_i^q(p(m''))}\}_{q=0}^{s-1}$ considered as elements of $M_i^k = M_i^{\le k}/M_i^{\le k-1}$ span the entire space $\ker \overline{A_i^{s}}$, again considered as a subspace of $M_i^k$. This means that there is some linear combination $t$ of the vectors $A_i^q(p(m''))$, which lies in $M_i^{\le k} \cap \ker A_i^{s}$, such that $\overline{t} = \overline{m}$ in the quotient $ M_i^k$. This means $m - t$ is an element of $M_i^{\le k-1}$ satisfying $A_i^s(m - t) = A_i^s(m) \neq 0$.

Since $k$ is the minimum $k$ for which $C2(i, k, s)$ fails, we can use the property $C2(i, k-1, s)$ to conclude that there exists some $v \in M_i^{\le k-1}$ such that $A_i^s(v) \not\in M_i^{\leq k - 2}$. Subsequently we can use the existence of such a $v$ to apply $C1(i, k-1, s)$, which gives us the existence of some $v' \in M_i^{\le k-1}$ such that $A_i^s(v') \not\in M_i^{\le k-1}$. But in the beginning, we assumed that $A_i^s(M_i^{\le k}) \subset M_i^{\le k-1}$; this is a contradiction.
\end{proof}

\bibliographystyle{alpha}

\end{document}